\newcommand{\bydef}{:=}
\newcommand{\bg}{{\overline{g}}}
\newcommand{\be}{{\overline{e}}}
\newcommand{\bG}{{\overline{G}}}
\newcommand{\sks}{\mathcal{K}}
\newcommand{\sg}{\mathrm{Sym}}
\newcommand{\cA}{\mathcal{A}}
\newcommand{\cB}{{\mathcal B}}
\newcommand{\cC}{\mathcal{C}}
\newcommand{\cD}{{\mathcal D}}
\newcommand{\cL}{\mathcal{L}}
\newcommand{\cM}{{\mathcal M}}
\newcommand{\cR}{\mathcal{R}}
\newcommand{\cS}{\mathcal{S}}
\newcommand{\W}{W} 
\newcommand{\id}{{\mathrm{id}}} 
\newcommand{\ul}[1]{\underline{#1}}
\newcommand{\wh}[1]{\widehat{#1}}
\newcommand{\wt}[1]{\widetilde{#1}}
\newcommand{\wb}[1]{\overline{#1}}
\newcommand{\vphi}{\varphi}
\newcommand{\veps}{\varepsilon}
\newcommand{\diag}{\mathrm{diag}}
\newcommand{\sgn}{\mathrm{sgn}}
\DeclareMathOperator*{\ot}{\otimes}
\newcommand{\ZZ}{\mathbb{Z}}
\newcommand{\FF}{\mathbb{F}}
\newcommand{\chr}[1]{\mathrm{char}\,#1}
\DeclareMathOperator{\End}{\mathrm{End}}
\DeclareMathOperator{\Aut}{\mathrm{Aut}}
\DeclareMathOperator{\Stab}{\mathrm{Stab}}
\DeclareMathOperator{\Diag}{\mathrm{Diag}}
\DeclareMathOperator{\Diags}{\mathbf{Diag}}
\DeclareMathOperator{\AAut}{\mathbf{Aut}}
\DeclareMathOperator{\Der}{\mathrm{Der}}
\DeclareMathOperator{\supp}{\mathrm{Supp}\,}
\newcommand{\brac}[1]{{#1}^{(-)}}
\newcommand{\Sl}{\mathfrak{sl}}
\newcommand{\Psl}{\mathfrak{psl}}
\newcommand{\So}{\mathfrak{so}}
\newcommand{\Sp}{\mathfrak{sp}}
\newcommand{\GL}{\mathrm{GL}}
\newcommand{\SP}{\mathrm{Sp}}
\newcommand{\M}{\Gamma_\cM}
\newcommand{\AI}{\Gamma_A^\mathrm{(I)}}
\newcommand{\AII}{\Gamma_A^\mathrm{(II)}}
\newcommand{\B}{\Gamma_B}
\newcommand{\C}{\Gamma_C}
\newcommand{\D}{\Gamma_D}
\newtheorem{theorem}{Theorem}[section]
\newtheorem{proposition}[theorem]{Proposition}
\newtheorem{lemma}[theorem]{Lemma}
\theoremstyle{definition}
\newtheorem{df}[theorem]{Definition}
\theoremstyle{remark}
\begin{document}

\title[Weyl groups of fine gradings on simple Lie algebras]{Weyl groups of fine gradings\\ on simple Lie algebras of types $A$, $B$, $C$ and $D$}

\author[Alberto Elduque]{Alberto Elduque$^{\star}$}
\thanks{$^{\star}$ Supported by the Spanish Ministerio de Educaci\'{o}n y Ciencia and
FEDER (MTM 2010-18370-C04-02) and by the Diputaci\'on General de Arag\'on (Grupo de Investigaci\'on de \'Algebra)}
\address{Departamento de Matem\'aticas e Instituto Universitario de Matem\'aticas y Aplicaciones,
Universidad de Zaragoza, 50009 Zaragoza, Spain}
\email{elduque@unizar.es}

\author[Mikhail Kochetov]{Mikhail Kochetov$^{\star\star}$}
\thanks{$^{\star\star}$Supported by the Natural Sciences and Engineering Research Council (NSERC) of Canada, Discovery Grant \# 341792-07.}
\address{Department of Mathematics and Statistics,
Memorial University of Newfoundland, St. John's, NL, A1C5S7, Canada}
\email{mikhail@mun.ca}


\subjclass[2010]{Primary 17B70, secondary 17B40, 16W50.}

\keywords{Graded algebra, fine grading, Weyl group, simple Lie algebra}

\begin{abstract}
Given a grading $\Gamma:\cL=\bigoplus_{g\in G}\cL_g$ on a nonassociative algebra $\cL$ by an abelian group $G$, we have two subgroups of $\Aut(\cL)$: the automorphisms that stabilize each component $\cL_g$ (as a subspace) and the automorphisms that permute the components. By the Weyl group of $\Gamma$ we mean the quotient of the latter subgroup by the former. In the case of a Cartan decomposition of a semisimple complex Lie algebra, this is the automorphism group of the root system, i.e., the so-called extended Weyl group. A grading is called fine if it cannot be refined. We compute the Weyl groups of all fine gradings on simple Lie algebras of types $A$, $B$, $C$ and $D$ (except $D_4$) over an algebraically closed field of characteristic different from $2$.
\end{abstract}

\maketitle


\section{Introduction}

In \cite{EK_Weyl}, we computed the Weyl groups of all fine gradings on matrix algebras, the Cayley algebra $\cC$ and the Albert algebra $\cA$ over an algebraically closed field $\FF$ ($\chr{\FF}\ne 2$ in the case of the Albert algebra). It is well known that $\Der(\cC)$ is a simple Lie algebra of type $G_2$ ($\chr{\FF}\ne 2,3$) and $\Der(\cA)$ is a simple Lie algebra of type $F_4$  ($\chr{\FF}\ne 2$). Since the automorphism group schemes of $\cC$ and $\Der(\cC)$, respectively $\cA$ and $\Der(\cA)$, are isomorphic, the classification of fine gradings on $\Der(\cC)$, respectively $\Der(\cA)$, is the same as that on $\cC$, respectively $\cA$ \cite{EK_Albert} and, moreover, the Weyl groups of the corresponding fine gradings are isomorphic. The situation with fine gradings on the simple Lie algebras belonging to series $A$, $B$, $C$ and $D$ is more complicated, because the fine gradings on matrix algebras yield only a part of the fine gradings on the simple Lie algebras of series $A$ (so-called Type I gradings). In order to obtain the fine gradings for series $B$, $C$ and $D$ and the remaining (Type II) fine gradings for series $A$, one has to consider fine $\vphi$-gradings on matrix algebras, which were introduced and classified in \cite{E10}.

The purpose of this paper is to compute the Weyl groups of all fine gradings on the simple Lie algebras of series $A$, $B$, $C$ and $D$, with the sole exception of type $D_4$ (which differs from the other types due to the triality phenomenon), over an algebraically closed field $\FF$ of characteristic different from $2$. To achieve this, we first determine the automorphisms of each fine $\vphi$-grading on the matrix algebra $\cR=M_n(\FF)$, $n\ge 3$, and then use the transfer technique of \cite{BK10} to obtain the Weyl group of the corresponding fine grading on the simple Lie algebra $\cL=[\cR,\cR]/(Z(\cR)\cap [\cR,\cR])$ or $\sks(\cR,\vphi)$, where in the second case $\vphi$ is an involution on $\cR$ and $\sks(\cR,\vphi)$ stands for the set of skew-symmetric elements with respect to $\vphi$.

We adopt the terminology and notation of \cite{EK_Weyl}, which is recalled in Section \ref{se:generalities} for convenience of the reader. In Section \ref{se:matrix}, we restate the classification of fine $\vphi$-gradings on matrix algebras \cite{E10} in more explicit terms and determine the relevant automorphism groups of each fine $\vphi$-grading (Theorem \ref{groups_fine_phi_grad_matrix}). In Section \ref{se:A}, we deal with the simple Lie algebras of series $A$ (Theorems \ref{th:Weyl_AI} and \ref{th:Weyl_AII}) and, in Section \ref{se:BCD}, with those of series $B$, $C$ and $D$ (Theorems \ref{th:Weyl_B} and \ref{th:Weyl_CD}).

\section{Generalities on gradings}\label{se:generalities}

Let $\cA$ be an algebra (not necessarily associative) over a field $\FF$ and let $G$ be a group (written multiplicatively).

\begin{df}\label{df:G_graded_alg}
A {\em $G$-grading} on $\cA$ is a vector space decomposition
\[
\Gamma:\;\cA=\bigoplus_{g\in G} \cA_g
\]
such that
\[
\cA_g \cA_h\subset \cA_{gh}\quad\mbox{for all}\quad g,h\in G.
\]
If such a decomposition is fixed, we will refer to $\cA$ as a {\em $G$-graded algebra}.
The nonzero elements $a\in\cA_g$ are said to be {\em homogeneous of degree $g$}; we will write $\deg a=g$.
The {\em support} of $\Gamma$ is the set $\supp\Gamma\bydef\{g\in G\;|\;\cA_g\neq 0\}$.
\end{df}

There are two natural ways to define equivalence relation on graded algebras. We will use the term ``isomorphism'' for the case when the grading group is a part of definition  and ``equivalence'' for the case when the grading group plays a secondary role. Let
\[
\Gamma:\; \cA=\bigoplus_{g\in G} \cA_g\mbox{ and }\Gamma':\;\cB=\bigoplus_{h\in H} \cB_h
\]
be two gradings on algebras, with supports $S$ and $T$, respectively.

\begin{df}\label{df:equ_grad}
We say that $\Gamma$ and $\Gamma'$ are {\em equivalent} if there exists an isomorphism of algebras $\psi\colon\cA\to\cB$ and a bijection $\alpha\colon S\to T$ such that $\psi(\cA_s)=\cB_{\alpha(s)}$ for all $s\in S$. Any such $\psi$ will be called an {\em equivalence} of $\Gamma$ and $\Gamma'$ (or of $\cA$ and $\cB$ if the gradings are clear from the context).
\end{df}

The algebras graded by a fixed group $G$ form a category where the morphisms are the {\em homomorphisms of $G$-graded algebras}, i.e., algebra homomorphisms $\psi\colon\cA\to\cB$ such that $\psi(\cA_g)\subset\cB_g$ for all $g\in G$.

\begin{df}\label{df:iso_grad}
In the case $G=H$, we say that $\Gamma$ and $\Gamma'$ are {\em isomorphic} if $\cA$ and $\cB$ are isomorphic as $G$-graded algebras, i.e., there exists an isomorphism of algebras $\psi\colon\cA\to\cB$ such that $\psi(\cA_g)=\cB_{g}$ for all $g\in G$.
\end{df}

It is known that if $\Gamma$ is a grading on a simple Lie algebra, then $\supp\Gamma$ generates an abelian group (see e.g. \cite[Proposition 3.3]{Ksur}). From now on, we will assume that our grading groups are {\em abelian}. Given a group grading $\Gamma$ on an algebra $\cA$, there are many groups $G$ such that $\Gamma$ can be realized as a $G$-grading, but there is one distinguished group among them \cite{PZ}.

\begin{df}\label{df:univ_group}
Suppose that $\Gamma$ admits a realization as a $G_0$-grading for some group $G_0$. We will say that $G_0$ is a {\em universal group of $\Gamma$} if, for any other realization of $\Gamma$ as a $G$-grading, there exists a unique homomorphism $G_0\to G$ that restricts to identity on $\supp\Gamma$.
\end{df}

One shows that the universal group, which we denote by $U(\Gamma)$, exists and depends only on the equivalence class of $\Gamma$. Indeed, $U(\Gamma)$ is generated by $S=\supp\Gamma$ with defining relations $s_1s_2=s_3$ whenever $0\ne\cA_{s_1}\cA_{s_2}\subset\cA_{s_3}$ ($s_i\in S$).

As in \cite{PZ}, we associate to $\Gamma$ three subgroups of the automorphism group $\Aut(\cA)$ as follows.

\begin{df}\label{df:aut_diag}
The {\em automorphism group of $\Gamma$}, denoted $\Aut(\Gamma)$, consists of all automorphisms of $\cA$ that permute the components of $\Gamma$. Each $\psi\in\Aut(\Gamma)$ determines a self-bijection $\alpha=\alpha(\psi)$ of the support $S$ such that $\psi(\cA_s)=\cA_{\alpha(s)}$ for all $s\in S$.
The {\em stabilizer of $\Gamma$}, denoted $\Stab(\Gamma)$, is the kernel of the homomorphism $\Aut(\Gamma)\to\sg(S)$ given by $\psi\mapsto\alpha(\psi)$. Finally, the {\em diagonal group of $\Gamma$}, denoted $\Diag(\Gamma)$, is the subgroup of the stabilizer consisting of all automorphisms $\psi$ such that the restriction of $\psi$ to any homogeneous component of $\Gamma$ is the multiplication by a (nonzero) scalar.
\end{df}

Thus $\Aut(\Gamma)$ is the group of self-equivalences of the graded algebra $\cA$ and $\Stab(\Gamma)$ is the group of automorphisms of the graded algebra $\cA$. Also, $\Diag(\Gamma)$ is isomorphic to the group of characters of $U(\Gamma)$ via the usual action of characters on $\cA$: if $\Gamma$ is a $G$-grading (in particular, we may take $G=U(\Gamma)$), then any character $\chi\in\wh{G}$ acts as an automorphism of $\cA$ by setting $\chi*a=\chi(g)a$ for all $a\in\cA_g$ and $g\in G$. If $\dim\cA<\infty$, then $\Diag(\Gamma)$ is a diagonalizable algebraic group (quasitorus). If, in addition, $\FF$ is algebraically closed and $\chr{\FF}=0$, then $\Gamma$ is the eigenspace decomposition of $\cA$ relative to $\Diag(\Gamma)$ (see e.g. \cite{Ksur}), the group $\Stab(\Gamma)$ is the centralizer of $\Diag(\Gamma)$, and $\Aut(\Gamma)$ is its normalizer. If we want to work over an arbitrary field $\FF$, we can define the subgroupscheme $\Diags(\Gamma)$ of the automorphism group scheme $\AAut(\cA)$ as follows:
\[
\Diags(\Gamma)(\cS)\bydef\{f\in\Aut_\cS(\cA\ot\cS)\;|\;f|_{\cA_g\ot\cS}\in\cS^\times\id_{\cA_g\ot\cS}\mbox{ for all }g\in G\}
\]
for any unital commutative associative algebra $\cS$ over $\FF$. Thus $\Diag(\Gamma)$ is the group of $\FF$-points of $\Diags(\Gamma)$. One checks that $\Diags(\Gamma)=U(\Gamma)^D$, the Cartier dual of $U(\Gamma)$, also $\Stab(\Gamma)$ is the centralizer of $\Diags(\Gamma)$ and $\Aut(\Gamma)$ is its normalizer with respect to the action of $\Aut(\cA)$ on $\AAut(\cA)$ by conjugation (see e.g. \cite[\S 2.2]{EK_Albert}).

\begin{df}
The quotient group $\Aut(\Gamma)/\Stab(\Gamma)$, which is a subgroup of $\sg(S)$, will be called the {\em Weyl group of $\Gamma$} and denoted by $\W(\Gamma)$.
\end{df}

It follows from the universal property of $U(\Gamma)$ that, for any $\psi\in\Aut(\Gamma)$, the bijection $\alpha(\psi)\colon\supp\Gamma\to\supp\Gamma$ extends to a unique automorphism of $U(\Gamma)$. This gives an action of $\Aut(\Gamma)$ by automorphisms of $U(\Gamma)$. Since the kernel of this action is $\Stab(\Gamma)$, we may regard $\W(\Gamma)=\Aut(\Gamma)/\Stab(\Gamma)$ as a subgroup of $\Aut(U(\Gamma))$. Given a $G$-grading $\Gamma:\;\cA=\bigoplus_{g\in G}\cA_g$ and a group homomorphism $\alpha\colon G\to H$, we obtain the induced $H$-grading ${}^\alpha\Gamma:\;\cA=\bigoplus_{h\in H}\cA'_h$ by setting $\cA'_h=\bigoplus_{g\in\alpha^{-1}(h)}\cA_g$. Clearly, an automorphism $\alpha$ of $U(\Gamma)$ belongs to $\W(\Gamma)$ if and only if the $U(\Gamma)$-gradings ${}^\alpha\Gamma$ and  $\Gamma$ are isomorphic.

Given gradings $\Gamma:\;\cA=\bigoplus_{g\in G}\cA_g$ and $\Gamma':\;\cA=\bigoplus_{h\in H}\cA'_h$, we say that $\Gamma'$ is a {\em coarsening} of $\Gamma$, or that $\Gamma$ is a {\em refinement} of $\Gamma'$, if for any $g\in G$ there exists $h\in H$ such that $\cA_g\subset\cA'_h$. The coarsening (or refinement) is said to be {\em proper} if the inclusion is proper for some $g$. (In particular, ${}^\alpha\Gamma$ is a coarsening of $\Gamma$, which is not necessarily proper.) A grading $\Gamma$ is said to be {\em fine} if it does not admit a proper refinement in the class of (abelian) group gradings. Any $G$-grading on a finite-dimensional algebra $\cA$ is induced from some fine grading $\Gamma$ by a homomorphism $\alpha\colon U(\Gamma)\to G$. The classification of fine gradings on $\cA$ up to equivalence is the same as the classification of maximal diagonalizable subgroupschemes of $\AAut(\cA)$ up to conjugation by $\Aut(\cA)$ (see e.g. \cite[\S 2.2]{EK_Albert}). Fine gradings on simple Lie algebras belonging to the series $A$, $B$, $C$ and $D$ (including $D_4$) were classified in \cite{E10} assuming $\FF$ algebraically closed of characteristic $0$. If we replace automorphism groups by automorphism group schemes, as was done in \cite{BK10}, then the arguments of \cite{E10} for all cases except $D_4$ (which required a completely different method) work under the much weaker assumption --- which we adopt from now on --- that $\FF$ is {\em algebraically closed of characteristic different from $2$}.


\section{Fine $\vphi$-gradings on matrix algebras}\label{se:matrix}

The goal of this section is to determine certain automorphism groups of fine $\vphi$-gradings on matrix algebras. These groups will be used in the next two sections to compute the Weyl groups of fine gradings on simple Lie algebras of series $A$, $B$, $C$ and $D$.

\subsection{Classification of fine $\vphi$-gradings on matrix algebras}

Here we present the results of \cite[\S 3]{E10} in a more explicit form. We also introduce certain objects that will appear throughout the paper.

\begin{df}
Let $\cA$ be an algebra and let $\vphi$ be an anti-automorphism of $\cA$. A $G$-grading $\Gamma:\;\cA=\bigoplus_{g\in G}\cA_g$ is said to be a {\em $\vphi$-grading} if $\vphi(\cA_g)=\cA_g$ for all $g\in G$ (i.e., $\vphi$ is an anti-automorphism of the $G$-graded algebra $\cA$) and $\vphi^2\in\Diag(\Gamma)$. The universal  group of a $\vphi$-grading is defined disregarding $\vphi$.
\end{df}

We have natural concepts of isomorphism and equivalence for $\vphi$-gradings. In addition, we will need another relation, which is weaker than equivalence.

\begin{df}
If $\Gamma_1$ is a $\vphi_1$-grading on $\cA$ and $\Gamma_2$ is a $\vphi_2$-gradings on $\cB$, we will say that $(\Gamma_1,\vphi_1)$ is {\em isomorphic} (respectively, {\em equivalent}) to $(\Gamma_2,\vphi_2)$  if there exists an isomorphism (respectively, equivalence) of graded algebras $\psi\colon\cA\to\cB$ such that $\vphi_2=\psi\vphi_1\psi^{-1}$. In the special case $\cA=\cB$ and $\vphi_1=\vphi_2$, we will simply say that $\Gamma_1$ is {\em isomorphic} (respectively, {\em equivalent}) to $\Gamma_2$. We will say that $(\Gamma_1,\vphi_1)$ is {\em weakly equivalent} to $(\Gamma_2,\vphi_2)$ if there exists an equivalence of graded algebras $\psi\colon\cA\to\cB$ such that $\xi\vphi_2=\psi\vphi_1\psi^{-1}$ for some $\xi\in\Diag(\Gamma_2)$.
\end{df}

Note that if $\vphi$ is an involution, then the condition $\vphi^2\in\Diag(\Gamma)$ is satisfied for any $\Gamma$. Also, any $\vphi$-grading $\Gamma$ on $\cA$ restricts to the space of skew-symmetric elements $\sks(\cA,\vphi)$.

Suppose $\cR$ is a matrix algebra equipped with a $G$-grading $\Gamma$. Then $\cR$ is isomorphic to $\End_\cD(V)$ where $\cD$ is a matrix algebra with a division grading (i.e., a grading that makes it a graded division algebra) and $V$ is a graded right $\cD$-module (which is necessarily free of finite rank). Let $T\subset G$ be the support of $\cD$. Then $T$ is a group and $\cD$ can be identified with a twisted group algebra $\FF^\sigma T$ for some $2$-cocycle $\sigma\colon T\times T\to\FF^\times$, i.e., $\cD$ has a basis $X_t$, $t\in T$, such that $X_u X_v=\sigma(u,v)X_{uv}$ for all $u,v\in T$ (we may assume $X_e=I$, the identity element of $\cD$). Let $\beta(u,v)=\frac{\sigma(u,v)}{\sigma(v,u)}$, so
\[
X_u X_v=\beta(u,v)X_v X_u.
\]
Then $\beta\colon T\times T\to\FF^\times$ is a nondegenerate alternating bicharacter --- see e.g. \cite[\S 2]{BK10}. A division grading on a matrix algebra with a given support $T$ and bicharacter $\beta$ can be constructed as follows. Since $\beta$ is nondegenerate and alternating, $T$ admits a ``symplectic basis'', i.e., there exists a decomposition of $T$ into the direct product of cyclic subgroups:
\begin{equation*}
T=(H_1'\times H_1'')\times\cdots\times (H_r'\times H_r'')
\end{equation*}
such that $H_i'\times H_i''$ and $H_j'\times H_j''$ are $\beta$-orthogonal for $i\neq j$, and $H_i'$ and $H_i''$ are in duality by $\beta$. Denote by $\ell_i$ the order of $H_i'$ and $H_i''$. (We may assume without loss of generality that $\ell_i$ are prime powers.) If we pick generators $a_i$ and $b_i$ for $H_i'$ and $H_i''$, respectively, then $\veps_i\bydef\beta(a_i,b_i)$ is a primitive $\ell_i$-th root of unity, and all other values of $\beta$ on the elements $a_1,b_1,\ldots,a_r,b_r$ are 1. Define the following elements of the algebra $M_{\ell_1}(\FF)\ot\cdots\ot M_{\ell_r}(\FF)$:
\begin{equation*}
X_{a_i}=I\ot\cdots I\ot X_i\ot I\ot\cdots I\quad\mbox{and}\quad X_{b_i}=I\ot\cdots I\ot Y_i\ot I\ot\cdots I,
\end{equation*}
where
\begin{equation*}
X_i=\begin{bmatrix}
\veps_i^{n-1} & 0                   & 0           & \ldots      & 0       & 0\\
0             & \veps_i^{n-2}       & 0           & \ldots      & 0       & 0\\
\ldots        &                     &             &             &         &  \\[3pt]
0             & 0                   & 0           & \ldots      & \veps_i & 0\\
0             & 0                   & 0           & \ldots      & 0       & 1
\end{bmatrix}\mbox{ and }
Y_i=\begin{bmatrix}
0 & 1 & 0 & \ldots & 0 & 0\\
0 & 0 & 1 & \ldots & 0 & 0\\
\ldots & & & & \\[3pt]
0 & 0 & 0 & \ldots & 0 & 1\\
1 & 0 & 0 & \ldots & 0 & 0
\end{bmatrix}
\end{equation*}
are the generalized Pauli matrices in the $i$-th factor, $M_{\ell_i}(\FF)$. Finally, set
\[
X_{(a_1^{i_1},b_1^{j_1},\ldots,a_r^{i_r},b_r^{j_r})}=X_{a_1}^{i_1}X_{b_1}^{j_1}\cdots X_{a_r}^{i_r}X_{b_r}^{j_r}.
\]
Identify $M_{\ell_1}(\FF)\ot\cdots\ot M_{\ell_r}(\FF)$ with $M_\ell(\FF)$, $\ell=\ell_1\cdots\ell_r=\sqrt{|T|}$, via Kronecker product. Then
\begin{equation*}
M_\ell(\FF)=\bigoplus_{t\in T}\FF X_t
\end{equation*}
is a division grading with support $T$ and bicharacter $\beta$.

Let $\vphi$ be an anti-automorphism of $\cR$ such that $\Gamma$ is a $\vphi$-grading. It is shown in \cite[\S 3]{E10} that there exists an involution $\vphi_0$ of the graded algebra $\cD$ and a $\vphi_0$-sesquilinear form $B\colon V\times V\to\cD$, which is nondegenerate, homogeneous and balanced, such that, for all $r\in\cR$, $\vphi(r)$ is the adjoint of $r$ with respect to $B$, i.e., $B(x,\vphi(r)y)=B(rx,y)$ for all $x,y\in V$ and $r\in\cR$. By {\em $\vphi_0$-sesquilinear} we mean that $B$ is $\FF$-bilinear and, for all $x,y\in V$ and $d\in\cD$, we have $B(xd,y)=\vphi_0(d)B(x,y)$ and $B(x,yd)=B(x,y)d$; by {\em balanced} we mean that, for all homogeneous $x,y\in V$, $B(x,y)=0$ is equivalent to $B(y,x)=0$. Moreover, the existence of $\vphi_0$ forces $T$ to be an elementary $2$-group. The pair $(\vphi_0,B)$ is uniquely determined by $\vphi$ up to the following transformations: for any nonzero homogeneous $d\in\cD$, we may simultaneously replace $\vphi_0$ by $\vphi'_0\colon a\mapsto d\vphi_0(a)d^{-1}$ and $B$ by $B'=dB$. Using Pauli matrices (of order $2$) as above to construct a realization of $\cD$, we see that matrix transpose $X\mapsto{}^t X$ preserves the grading: for any $u\in T$, the transpose of $X_u$ equals $\pm X_u$. It follows from \cite[Proposition 2.3]{BK10} that $(\vphi_0,B)$ can be adjusted so that $\vphi_0$ coincides with the matrix transpose. We will always assume that $(\vphi_0,B)$ is adjusted in this way, which makes $B$ unique up to a scalar in $\FF$. Also, we may write
\[
\vphi_0(X_u)=\beta(u)X_u
\]
where $\beta(u)\in\{\pm 1\}$ for all $u\in T$. If we regard $T$ as a vector space over the field of two elements, then the function $\beta\colon T\to\{\pm 1\}$ is a quadratic form whose polar form is the bicharacter $\beta\colon T\times T\to\{\pm 1\}$.

We will say that a $\vphi$-grading is {\em fine} if it is not a proper coarsening of another $\vphi$-grading. The following construction of fine $\vphi$-gradings on matrix algebras was given in \cite{E10} starting from $\cD$. We start from $T$, an elementary $2$-group of even dimension, i.e., $T=\ZZ_2^{\dim T}$, which we continue to write multiplicatively. Let $\beta$ be a nondegenerate alternating bicharacter on $T$. Fix a realization, $\cD$, of the matrix algebra endowed with a division grading with support $T$ and bicharacter $\beta$, and let $\vphi_0$ be the matrix transpose on $\cD$. Let $q\ge 0$ and $s\ge 0$ be two integers. Let
\begin{equation}\label{datum_2_tau_fine}
\tau=(t_1,\ldots,t_q),\quad t_i\in T.
\end{equation}
Denote by $\wt{G}=\wt{G}(T,q,s,\tau)$ the abelian group generated by $T$ and the symbols $\wt{g}_1,\ldots,\wt{g}_{q+2s}$ with defining relations
\begin{equation}\label{eq:def_relations_G_tilde}
\wt{g}_1^2 t_1=\ldots=\wt{g}_q^2 t_q=\wt{g}_{q+1}\wt{g}_{q+2}=\ldots=\wt{g}_{q+2s-1}\wt{g}_{q+2s}.
\end{equation}

\begin{df}\label{construct_phi_fine}
Let $\cM(\cD,q,s,\tau)$ be the $\wt{G}$-graded algebra $\End_\cD(V)$ where $V$ has a $\cD$-basis $\{v_1,\ldots,v_{q+2s}\}$ with $\deg v_i=\wt{g}_i$. Let $n=(q+2s)2^{\frac12\dim T}$ and $\cR=M_n(\FF)$. The grading $\Gamma$ on $\cR$ obtained by identifying $\cR$ with $\cM(\cD,q,s,\tau)$ will be denoted by $\M(\cD,q,s,\tau)$. In other words, we define this grading by identifying $\cR=M_{q+2s}(\cD)$ and setting $\deg(E_{ij}\ot X_t)\bydef\wt{g}_i t\wt{g}_j^{-1}$. By abuse of notation, we will also write $\M(T,q,s,\tau)$.
\end{df}

Let $\wt{G}^0$ be the subgroup of $\wt{G}$ generated by $\supp\Gamma$, which consists of the elements $z_{i,j,t}\bydef \wt{g}_i t\wt{g}_j^{-1}$, $t\in T$ (so $z_{i,i,t}=t$ for all $t\in T$). Set $z_i\bydef z_{i,i+1,e}$ for $i=1,\ldots,q$ ($i\ne q$ if $s=0$),  $z_{q+i}=z_{q+2i-1,q+2i+1,e}$ for $i=1,\ldots,s-1$, and $z_{q+s}=z_{q+1,q+2,e}$ (if $s>0$). If $s=0$, then $\wt{G}^0$ is generated by $T$ and the elements $z_1,\ldots,z_{q-1}$. If $s=1$, then $\wt{G}^0$ is generated by $T$ and $z_1,\ldots,z_{q+1}$. If $s>1$, then relations \eqref{eq:def_relations_G_tilde} imply that $z_{q+2i,q+2i+2,e}=z_{q+i}^{-1}$ for $i=1,\ldots,s-1$, hence $\wt{G}^0$ is generated by $T$ and  $z_1,\ldots,z_{q+s}$. Moreover, relations \eqref{eq:def_relations_G_tilde} are equivalent to the following:
\[
\begin{array}{lll}
z_i^2=t_i t_{i+1}\;(1\le i<q), & z_q^2 z_{q+s}=t_q\;(\mbox{if }q>0\mbox{ and }s>0).
\end{array}
\]
Let $\wt{G}^1$ be the subgroup generated by $T$ and $z_1,\ldots,z_{q-1}$. Let $\wt{G}^2$ be the subgroup generated by $z_1,\ldots,z_s$ if $q=0$ and by $z_q,\ldots,z_{q+s-1}$ if $q>0$. Then it is clear from the above relations that $\wt{G}^0=\wt{G}^1\times\wt{G}^2$, $\wt{G}^2\cong\ZZ^s$, while $\wt{G}^1=T$ if $q=0$ and $\wt{G}^1\cong\ZZ_2^{\dim T+q-1-2\dim T_0}\times\ZZ_4^{\dim T_0}$ if $q>0$, where $T_0$ is the subgroup of $T$ generated by the elements $t_i t_{i+1}$, $i=1,\ldots,q-1$. To summarize:
\begin{equation}\label{eq:univ_phi_fine}
\wt{G}^0\cong\ZZ_2^{\dim T-2\dim T_0+\max(0,q-1)}\times\ZZ_4^{\dim T_0}\times\ZZ^s.
\end{equation}
Note that relations \eqref{eq:def_relations_G_tilde} are also equivalent to the following:
\[
\begin{array}{ll}
z_{i,j,t_it}=z_{j,i,t_jt}, & i,j\le q,\quad t\in T;\\
z_{i,q+2j-1,t_it}=z_{q+2j,i,t},\quad z_{i,q+2j,t_it}=z_{q+2j-1,i,t}, & i\le q,\quad j\le s,\quad t\in T;\\
z_{q+2i-1,q+2j-1,t}=z_{q+2j,q+2i,t}, & i,j\le s,\quad t\in T;\\
z_{q+2i-1,q+2j,t}=z_{q+2j-1,q+2i,t}, & i,j\le s,\quad i\ne j,\quad t\in T.
\end{array}
\]
One verifies that, apart from the above equalities and $z_{i,i,t}=t$, the elements $z_{i,j,t}$ are distinct, so the support of $\Gamma=\M(\wt{G},\cD,\kappa,\wt{\gamma})$ is given by
\[
\begin{split}
\supp\Gamma=\, & \{z_{i,j,t}\;|\;i<j\le q,\; t\in T\} \cup \{z_{i,q+j,t}\;|\;i\le q,\; j\le 2s,\; t\in T\}\\
& \cup \{z_{q+2i-1,q+2j-1,t}\;|\;i<j\le s,\; t\in T\} \cup \{z_{q+2i,q+2j,t}\;|\;i<j\le s,\; t\in T\}\\
& \cup \{z_{q+2i-1,q+2j,t}\;|\;i,j\le s,\; i\ne j,\; t\in T\}\\
& \cup \{z_{q+2i-1,q+2i,t}\;|\;i\le s,\; t\in T\} \cup \{z_{q+2i,q+2i-1,t}\;|\;i\le s,\; t\in T\} \cup T,
\end{split}
\]
where the union is disjoint and all homogeneous components except those that appear in the last line have dimension $2$, the components of degrees $z_{q+2i-1,q+2i,t}$ and $z_{q+2i,q+2i-1,t}$ have dimension $1$, and the components of degree $t$ have dimension $q+2s$.

\begin{proposition}\label{prop:diag_grading}
Let $\Gamma=\M(\cD,q,s,\tau)$. Then $\wt{G}^0=\wt{G}^0(T,q,s,\tau)$ is the universal group of $\Gamma$, and $\Diag(\Gamma)$ consists of all automorphisms of the form $X\mapsto DXD^{-1}$, $X\in\cR$, where
\begin{equation}\label{eq:diag_grading}
D=\diag(\lambda_1,\ldots,\lambda_{q+2s})\ot X_t,\quad \lambda_i\in\FF^\times,\, t\in T,
\end{equation}
satisfying the relation
\begin{equation}\label{eq:relations_diag}
\lambda_1^2\beta(t,t_1)=\ldots=\lambda_q^2\beta(t,t_q)=\lambda_{q+1}\lambda_{q+2}=\ldots=\lambda_{q+2s-1}\lambda_{q+2s}.
\end{equation}
\end{proposition}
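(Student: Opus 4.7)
The plan is to first determine $\Diag(\Gamma)$ by analyzing inner automorphisms of $\cR$, and then deduce the universality of $\wt{G}^0$ via Cartier duality. Since $\cR = M_n(\FF)$ is central simple, every element of $\Diag(\Gamma)$ has the form $\Ad_D$ for some $D \in \GL_n(\FF)$, uniquely determined up to a nonzero scalar, so the task reduces to constraining the shape of $D$.

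First I would exploit the component $\cR_e$ (with $e$ the identity of $T$), which contains the orthogonal idempotents $E_{ii}\ot I_\cD$ for $i=1,\ldots,q+2s$. Since $\Ad_D$ acts on $\cR_e$ by a single scalar that must equal $1$ (because $\Ad_D(I) = I$ with $I = \sum_i E_{ii}\ot I_\cD$), each $E_{ii}\ot I_\cD$ is individually fixed, so $D$ commutes with all of them. Thus $D$ is block-diagonal of the form $D = \sum_i E_{ii}\ot D_i$ with $D_i \in \cD^\times$. Next, each component $\cR_t$ for $t\in T\setminus\{e\}$ contains $E_{ii}\ot X_t$ for every $i$, and the requirement that $\Ad_D$ act by a single scalar on $\cR_t$ forces $D_i X_t D_i^{-1}$ to be a scalar multiple of $X_t$ independent of $i$. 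Expanding each $D_i$ in the basis $\{X_s : s \in T\}$ of $\cD$ and using the nondegeneracy of $\beta$, one concludes that each $D_i$ must be homogeneous and that all $D_i$ share the same degree; hence $D_i = \lambda_i X_u$ for a common $u \in T$ and free scalars $\lambda_i \in \FF^\times$, yielding the shape \eqref{eq:diag_grading}.

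A direct computation then gives
\[
\Ad_D(E_{ij}\ot X_t) = \lambda_i\lambda_j^{-1}\beta(u,t)\,(E_{ij}\ot X_t).
\]
Running through the four families of identifications among support elements listed before the proposition and equating the scalars attached to the two representatives of each identified pair, I obtain: $\lambda_i^2\beta(u,t_i) = \lambda_j^2\beta(u,t_j)$ for $i,j\le q$ (from the first family); $\lambda_i^2\beta(u,t_i) = \lambda_{q+2j-1}\lambda_{q+2j}$ (from the second and third families); and $\lambda_{q+2i-1}\lambda_{q+2i} = \lambda_{q+2j-1}\lambda_{q+2j}$ (from the fourth and fifth families, which reduce to the same equation). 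Collectively these are exactly \eqref{eq:relations_diag}, and conversely any $D$ of the form \eqref{eq:diag_grading} subject to \eqref{eq:relations_diag} yields an $\Ad_D$ which is constant on each homogeneous component.

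For the universality of $\wt{G}^0$, there is a tautological surjection $\pi\colon U(\Gamma) \twoheadrightarrow \wt{G}^0$ which restricts to the identity on $\supp\Gamma$. Using $\Diags(\Gamma) = U(\Gamma)^D$ from Section~\ref{se:generalities} together with the explicit description of $\Diag(\Gamma)$ just obtained, one has $\Hom(\wt{G}^0,\FF^\times) \cong \Diag(\Gamma) \cong \Hom(U(\Gamma),\FF^\times)$; since $\FF$ is algebraically closed and both groups are finitely generated abelian, the dual $\pi^{\ast}$ is an isomorphism, forcing $\pi$ to be an isomorphism as well. The main technical obstacle will be the bookkeeping in the derivation of \eqref{eq:relations_diag}: verifying that the identification families produce exactly the chain of equations \eqref{eq:relations_diag} without introducing spurious constraints, and checking separately the edge cases $q=0$ and $s=0$, where some families are vacuous and the shape of $\wt{G}^0$ degenerates accordingly.
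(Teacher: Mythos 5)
Your proof reverses the order of the paper's argument, and the two halves fare differently.

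For $\Diag(\Gamma)$: the paper establishes universality first and then reads off $\Diag(\Gamma)$ as characters of $\wt{G}^0$ (extended to $\wt{G}$ by divisibility of $\FF^\times$). You instead determine $\Diag(\Gamma)$ directly from Skolem--Noether, the idempotents in $\cR_e$, and the homogeneity constraint on each $D_i$. This is a genuinely different and quite serviceable route; the computations check out (in particular, $\Ad_D(I)=I$ does force the scalar on $\cR_e$ to be $1$, and the nondegeneracy argument correctly pins each $D_i$ to be homogeneous of a common degree). Your route buys you an elementary, representation-free derivation of \eqref{eq:diag_grading}--\eqref{eq:relations_diag}, at the cost of the case bookkeeping you already flag.

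For universality, however, there is a real gap, and it is not the bookkeeping you anticipate. You want to conclude that $\pi\colon U(\Gamma)\twoheadrightarrow\wt{G}^0$ is an isomorphism from the fact that $\pi^\ast\colon\Hom(\wt{G}^0,\FF^\times)\to\Hom(U(\Gamma),\FF^\times)$ is an isomorphism. This implication fails for finitely generated abelian groups over a field of positive characteristic $p$: any $p$-torsion in $\ker\pi$ is invisible to $\Hom(\,\cdot\,,\FF^\times)$, since $\mu_p(\FF)=1$. (Already $\ZZ_p\to 1$ has $\pi^\ast$ an isomorphism while $\pi$ is not.) The proposition is stated under $\chr\FF\ne 2$ only, so $\chr\FF$ could be an odd prime, and $U(\Gamma)$ is not yet known to be $p$-torsion-free -- that is exactly what you are trying to prove. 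To close the gap you would need an extra input: for instance, observe that $U(\Gamma)$ depends only on which products of homogeneous components are nonzero, which here is a purely combinatorial fact about matrix units and the $X_t$, so $U(\Gamma)$ is the same group over every admissible $\FF$ and it suffices to run your duality argument in characteristic~$0$; or, closer to the paper, argue directly from the presentation of $U(\Gamma)$ (generators $\supp\Gamma$, relations $z_{i,\ell,u}z_{\ell,j,v}=z_{i,j,uv}$) that it coincides with the presentation of $\wt{G}^0$. As written, the duality step does not establish universality.
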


\begin{proof}
The relations  $z_{i,\ell,u}z_{\ell,j,v}=z_{i,j,uv}$, $u,v\in T$, can be rewritten in terms of the elements of $\supp\Gamma$, producing a set of defining relations for $\wt{G}^0$. It follows that $\wt{G}^0$ is the universal group of $\Gamma$.

Since $\wt{G}^0$ is the universal group of $\Gamma$, $\Diag(\Gamma)$ consists of all automorphisms of the form $X\mapsto \chi*X$ where $\chi$ is a character of $\wt{G}^0$. Since $\FF^\times$ is a divisible group, we can assume that $\chi$ is a character of $\wt{G}$. Let $\lambda_i=\chi(\wt{g}_i)$, $i=1,\ldots,q+2s$. Let $t$ be the element of $T$ such that $\chi(u)=\beta(t,u)$ for all $u\in T$. Looking at relations \eqref{eq:def_relations_G_tilde}, we see that \eqref{eq:relations_diag} must hold. Conversely, any $t\in T$ and a set of $\lambda_i\in\FF^\times$ satisfying \eqref{eq:relations_diag} will determine a character $\chi$ of $\wt{G}$. It remains to observe that the action of $\chi$ on $\cR$ coincides with the conjugation by $D$ as in \eqref{eq:diag_grading}.
\end{proof}

The following is Proposition 3.3 from \cite{E10}.

\begin{theorem}\label{th:E3.11}
Consider the grading $\Gamma=\M(\cD,q,s,\tau)$ on $\cR=M_{q+2s}(\cD)$ by $\wt{G}^0=\wt{G}^0(T,q,s,\tau)$ where $\tau$ is given by \eqref{datum_2_tau_fine}. Let $\mu=(\mu_1,\ldots,\mu_s)$ where $\mu_i$ are scalars in $\FF^\times$. Let $\vphi=\vphi_{\tau,\mu}$ be the anti-automorphism of $\cR$ defined by $\vphi(X)=\Phi^{-1}({}^tX)\Phi$, $X\in\cR$, where $\Phi$ is the block-diagonal matrix given by
\begin{equation}\label{formula_antaut_fine}
\Phi=\diag\left({X_{t_1},\ldots,X_{t_q}},\begin{bmatrix}0&I\\\mu_1 I&0\end{bmatrix},\ldots,\begin{bmatrix}0&I\\\mu_s I&0\end{bmatrix}\right)
\end{equation}
and $I$ is the identity element of $\cD$. Then $\Gamma$ is a fine $\vphi$-grading unless $q=2$, $s=0$ and $t_1=t_2$. In the latter case, $\Gamma$ can be refined to a $\vphi$-grading that makes $\cR$ a graded division algebra.\qed
\end{theorem}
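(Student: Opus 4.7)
The plan is to verify the three claims in turn: that $\Gamma$ is a $\vphi$-grading, that it is fine as a $\vphi$-grading outside the exceptional case, and that in the exceptional case it admits an explicit refinement to a graded division algebra.

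First, we verify that $\Gamma$ is a $\vphi$-grading. Since $\vphi(X)=\Phi^{-1}X^t\Phi$, we have $\vphi^2(X)=(\Phi^{-1}\Phi^t)X(\Phi^{-1}\Phi^t)^{-1}$, and a direct block-by-block computation — using $X_{t_i}^t=\beta(t_i)X_{t_i}$ and the standard inverse of an anti-diagonal $2\times 2$ block — gives
\[
\Phi^{-1}\Phi^t = \diag\bigl(\beta(t_1),\ldots,\beta(t_q),\,\mu_1^{-1},\mu_1,\ldots,\mu_s^{-1},\mu_s\bigr)\ot I.
\]
This scalar matrix trivially satisfies \eqref{eq:relations_diag} with $t=e$, so by Proposition \ref{prop:diag_grading}, $\vphi^2\in\Diag(\Gamma)$. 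To see $\vphi(\cR_g)\subseteq\cR_g$, apply $\vphi$ to a basis element $E_{ij}\ot X_t$: transposition produces $\pm E_{ji}\ot X_t$, and left- and right-multiplication by $\Phi^{-1}$ and $\Phi$ adjust the row/column indices and the $T$-component according to which of the $q$ singleton blocks or $s$ pair blocks contain rows $i,j$. In each case, the defining relations \eqref{eq:def_relations_G_tilde} of $\wt{G}^0$ force the resulting monomial to have the original degree.

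Second, for fineness, a proper refinement $\Gamma'$ of $\Gamma$ as a $\vphi$-grading corresponds to a diagonalizable subgroupscheme $Q'\supsetneq\Diags(\Gamma)$ of $\AAut(\cR)$ normalized by $\vphi$. Such $Q'$ must split at least one multi-dimensional component of $\Gamma$: the $(q+2s)$-dimensional component of degree $t\in T$, or a $2$-dimensional component $\FF(E_{ij}\ot X_t)\oplus\FF(E_{ji}\ot X_t)$ that occurs when $\wt{g}_i\wt{g}_j^{-1}$ is an involution in $\wt{G}^0$. The algebra multiplication rules impose strong constraints: consistency of the identities for $E_{ii}X_t\cdot E_{ij}X_{t'}$, $E_{ij}X_t\cdot E_{ji}X_{t'}$, and (for three distinct indices $i,j,k$) $E_{ij}X_t\cdot E_{jk}X_{t'}$ forces all diagonal elements $E_{ii}\ot X_t$ (for fixed $t$) to share a single refined degree, and allows the refinement to separate $E_{ij}\ot X_t$ from $E_{ji}\ot X_t$ only by an element of order $2$ in the refined group. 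The $\vphi$-compatibility further requires that on such a $2$-dimensional component $\vphi$ acts by a swap (up to scalar), so the two new components must be spanned by $\vphi$-eigenvectors such as $E_{ij}\ot X_t\pm E_{ji}\ot X_t$. A case analysis accounting for the $s$-blocks and the cocycle relations present when $q\geq 3$ shows that this situation is compatible with a proper refinement only when $q=2$, $s=0$, and $t_1=t_2$.

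In that exceptional case, $\Phi=I_2\ot X_{t_1}$ and $\vphi(E_{ij}\ot X_t)=\beta(t)\beta(t_1,t)\,E_{ji}\ot X_t$, so the elements $G_\pm(t)=E_{11}\ot X_t\pm E_{22}\ot X_t$ and $F_\pm(t)=E_{12}\ot X_t\pm E_{21}\ot X_t$ are all $\vphi$-eigenvectors. A direct verification that products of these elements remain within the $\FF$-span of such a basis, together with a suitable assignment of degrees in $\wt{G}^0\times\ZZ_2\times\ZZ_2$, produces a refined $\vphi$-grading under which $\cR$ becomes a graded division algebra. The main obstacle in the overall argument is the case analysis in the second step, where one must balance the algebraic constraints from the group-grading structure against the $\vphi$-compatibility requirement to isolate the unique exceptional configuration; the remaining computations are explicit and routine.
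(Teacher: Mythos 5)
The paper does not prove this result itself: the \qed{} at the end of the statement signals that Theorem~\ref{th:E3.11} is imported verbatim from \cite{E10} (it is Proposition~3.3 there), so there is no internal proof to compare your attempt against.

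Judged on its own, your proposal correctly handles the first and third claims but leaves the main claim essentially unproven. For the first claim, your computation of $\Phi^{-1}\Phi^t$ is correct, and the verification that it satisfies \eqref{eq:relations_diag} with $t=e$ and hence gives $\vphi^2\in\Diag(\Gamma)$ is right; the degree-preservation check via \eqref{eq:def_relations_G_tilde} also goes through. For the exceptional case, the elements $G_\pm(t)$, $F_\pm(t)$ are indeed $\vphi$-eigenvectors (your formula $\vphi(E_{ij}\ot X_t)=\beta(t)\beta(t_1,t)E_{ji}\ot X_t$ is correct when $\Phi=I_2\ot X_{t_1}$), and they do give a division grading; however, the refined universal group is $T\times\ZZ_2^2\cong\wt G^0\times\ZZ_2$ (one new $\ZZ_2$ direction, since $z_1$ already has order $2$ in $\wt G^0$), not $\wt G^0\times\ZZ_2\times\ZZ_2$ as you wrote.

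The real gap is the fineness assertion. Observe that $\M(T,q,s,\tau)$ is in general a \emph{proper coarsening} of the fine group grading $\M(T,q+2s)$ on $\cR$ — the universal group \eqref{eq:univ_phi_fine} has free rank only $s$, versus $q+2s-1$ for $\M(T,q+2s)$ — so the grading is usually not fine as a plain group grading. Fineness \emph{as a $\vphi$-grading} therefore hinges entirely on showing that no proper refinement can be made $\vphi$-compatible, and your outline never actually carries that out: the sentence "A case analysis accounting for the $s$-blocks and the cocycle relations present when $q\geq 3$ shows that this situation is compatible with a proper refinement only when $q=2$, $s=0$, and $t_1=t_2$'' is an assertion, not an argument. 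Moreover your description of the $2$-dimensional components as $\FF(E_{ij}\ot X_t)\oplus\FF(E_{ji}\ot X_t)$ with the same $t$ is only correct when $t_i=t_j$; in general the second basis vector is $E_{ji}\ot X_{t_it_jt}$, and $\vphi$ swaps them up to scalar. A correct proof would need to analyse which refinements of the degree-$T$ components and of these $2$-dimensional components are simultaneously compatible with the multiplication constraints (which already rule out separating the diagonal idempotents $E_{ii}\ot I$ by degree) and with $\vphi$-invariance, and that is precisely the content you have not supplied. As it stands, parts 1 and 3 are established but part 2 is not.
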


This result and the discussion preceding Proposition 3.8 in \cite{E10} yield

\begin{theorem}\label{th:fine_phi_grad}
Let $\Gamma$ be a fine $\vphi$-grading on the matrix algebra $\cR=M_n(\FF)$ over an algebraically closed field $\FF$, $\chr{\FF}\ne 2$. Then $(\Gamma,\vphi)$ is equivalent to some $(\M(T,q,s,\tau),\vphi_{\tau,\mu})$ as in Theorem \ref{th:E3.11} where $(q+2s)2^{\frac12\dim T}=n$.\qed
\end{theorem}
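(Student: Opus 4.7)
The plan is to apply the structure theorem for graded matrix algebras together with the analysis of $\vphi$-gradings recalled before Theorem~\ref{th:E3.11} in order to exhibit $(\Gamma,\vphi)$ as equivalent to one of the explicit models.

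First, I regard $\Gamma$ as a $U(\Gamma)$-grading. The structure theorem yields an isomorphism of graded algebras $\cR\cong\End_\cD(V)$, where $\cD$ is a matrix algebra with a division grading (support $T\subset U(\Gamma)$, nondegenerate alternating bicharacter $\beta$) and $V$ is a graded free right $\cD$-module. Since $\Gamma$ is a $\vphi$-grading, $\vphi$ is the adjoint with respect to a nondegenerate homogeneous balanced $\vphi_0$-sesquilinear form $B\colon V\times V\to\cD$ for some involution $\vphi_0$ of the graded algebra $\cD$; in particular, $T$ must be an elementary $2$-group, so $|T|=2^{2r}$ for some $r\ge 0$. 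Fixing a Pauli-type realization of $\cD$ and adjusting the pair $(\vphi_0,B)$, we may assume $\vphi_0$ is the matrix transpose, after which $B$ is determined up to a nonzero scalar in $\FF$.

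Next, I pick a homogeneous $\cD$-basis $v_1,\ldots,v_k$ of $V$. Nondegeneracy together with the balanced property force a unique partner for each $v_i$: either $B(v_i,v_i)\ne 0$ (a self-paired vector), or there is exactly one $j\ne i$ with $B(v_i,v_j)\ne 0$ while $B(v_i,v_i)=B(v_j,v_j)=0$ (a hyperbolic pair). After reordering the basis, let $q$ be the number of self-paired vectors and $s$ the number of hyperbolic pairs, so $k=q+2s$ and $n=(q+2s)\cdot 2^r$, matching the formula in the statement. On each self-paired vector, rescaling $v_i$ by a homogeneous unit of $\cD$ brings $B(v_i,v_i)$ to $X_{t_i}$ for a unique $t_i\in T$; on each hyperbolic pair, a similar rescaling brings $B(v_{q+2j-1},v_{q+2j})$ to $I$ and $B(v_{q+2j},v_{q+2j-1})$ to $\mu_j I$ for some $\mu_j\in\FF^\times$. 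Setting $\tau=(t_1,\ldots,t_q)$ and $\mu=(\mu_1,\ldots,\mu_s)$, the degrees $\wt{g}_i\bydef\deg v_i$ then satisfy exactly the relations~\eqref{eq:def_relations_G_tilde}, and the matrix of $B$ in this basis coincides with the block-diagonal $\Phi$ of~\eqref{formula_antaut_fine}.

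This identification shows that $(\Gamma,\vphi)$ is equivalent, as a pair of a grading together with its compatible anti-automorphism, to $(\M(T,q,s,\tau),\vphi_{\tau,\mu})$. To conclude, Theorem~\ref{th:E3.11} asserts that each such model is itself a fine $\vphi$-grading except in the single degenerate case $q=2$, $s=0$, $t_1=t_2$, where the model admits a proper refinement to a graded division algebra. Since our $\Gamma$ is fine by hypothesis, that exceptional case is automatically excluded. The main obstacle I expect is the normalization step: the simultaneous rescaling of the basis vectors must be carried out in a way that is compatible with homogeneity, respects the residual ambiguity in $(\vphi_0,B)$, and accommodates the pairing structure forced by the balanced condition. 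This graded linear algebra --- including the dichotomy between self-paired vectors and hyperbolic pairs, and the choice of normalizations $X_{t_i}$ versus $I$ --- is precisely what the discussion around Proposition~3.8 of~\cite{E10} formalizes.
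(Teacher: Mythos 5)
Your route --- writing $\cR\cong\End_\cD(V)$, realizing $\vphi$ by a $\vphi_0$-sesquilinear form $B$, normalizing $\vphi_0$ to the transpose, splitting a homogeneous basis of $V$ into self-paired vectors and hyperbolic pairs, and normalizing $B$ to the matrix $\Phi$ of \eqref{formula_antaut_fine} --- is the correct one; it is essentially the argument from \cite[\S 3]{E10} that the paper cites here without reproducing. But there is a gap in the logical flow: you use fineness one step too late.

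You assert that the degrees $\wt{g}_i:=\deg v_i$ satisfy \emph{exactly} the relations \eqref{eq:def_relations_G_tilde}, and invoke fineness only to rule out the degenerate case $q=2$, $s=0$, $t_1=t_2$. In fact the construction by itself shows only that the $\deg v_i$ satisfy \emph{at least} those relations (they come from homogeneity of $B$), so what you obtain directly is a surjective homomorphism $\wt{G}^0\to U(\Gamma)$, $\wt{g}_i\mapsto\deg v_i$, exhibiting $\Gamma$ as a --- possibly proper --- coarsening of $\M(T,q,s,\tau)$. Without fineness this coarsening can genuinely be proper: take the trivial grading on $M_n(\FF)$, $n\ge 3$, with $\vphi$ the transpose. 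It is a $\vphi$-grading, your construction yields $\cD=\FF$, $T=\{e\}$, $q=n$, $s=0$, $\tau=(e,\ldots,e)$, and yet the trivial grading is a proper coarsening of $\M(\{e\},n,0,\tau)$, not equivalent to it. The missing observation is that $\M(T,q,s,\tau)$, being a refinement of $\Gamma$, satisfies $\Diag(\Gamma)\subset\Diag(\M(T,q,s,\tau))$, so $\vphi^2\in\Diag(\M(T,q,s,\tau))$ and $\M(T,q,s,\tau)$ is itself a $\vphi$-grading (Theorem \ref{th:E3.11}); since $\Gamma$ is by hypothesis a \emph{fine} $\vphi$-grading, the coarsening cannot be proper and $\Gamma=\M(T,q,s,\tau)$. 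With this in place your final exclusion of $q=2$, $s=0$, $t_1=t_2$ is correct. The secondary assertion that nondegeneracy and balancedness ``force'' any homogeneous basis to decompose into self-paired vectors and hyperbolic pairs is also imprecise --- one must \emph{construct} such a basis by a graded Gram--Schmidt argument --- but you already flag this yourself as the normalization step requiring care.
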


In \cite{E10}, in order to obtain the classification of fine gradings on simple Lie algebras of series $A$, one classifies, {\em up to weak equivalence}, all pairs $(\Gamma,\vphi)$ where $\Gamma$ is a fine $\vphi$-grading on a matrix algebra. At the same time, for series $B$, $C$ and $D$, one classifies, {\em up to equivalence}, such pairs where $\vphi$ is an {\em involution} of appropriate type: orthogonal for series $B$ and $D$ (we write $\sgn(\vphi)=1$) and symplectic for series $C$ (we write $\sgn(\vphi)=-1$). The classifications involve equivalences $\cD\to\cD'$ satisfying certain conditions, where $\cD$ and $\cD'$ are matrix algebras with division gradings. If $T$ is the support of $\cD$ and $T'$ is the support of $\cD'$, then the graded algebras $\cD$ and $\cD'$ are equivalent if and only if the groups $T$ and $T'$ are isomorphic. Identifying $T$ and $T'$, we may assume that $\cD=\cD'$ and look at self-equivalences of $\cD$, i.e., the elements of $\Aut(\Gamma_0)$ where $\Gamma_0$ is the grading on $\cD$. By \cite[Proposition 2.7]{EK_Weyl}, the Weyl group $\W(\Gamma_0)$ is isomorphic to $\Aut(T,\beta)$, the group of automorphisms of $T$ that preserve the bicharacter $\beta$. Explicitly, if $\psi_0\in\Aut(\Gamma_0)$, then $\psi_0(X_t)\in\FF X_{\alpha(t)}$, for all $t\in T$, where $\alpha\in\Aut(T,\beta)$, and the mapping $\psi_0\mapsto\alpha$ yields an isomorphism $\Aut(\Gamma_0)/\Stab(\Gamma_0)\to\Aut(T,\beta)$. Hence the conditions in \cite{E10} can be rewritten in terms of the group $T$ rather than the graded division algebra $\cD$. Note that $\Aut(T,\beta)$ can be regarded as a sort of symplectic group; in particular, if $T$ is an elementary $2$-group, then $\Aut(T,\beta)\cong\SP_m(2)$ where $m=\dim T$.

\begin{df}\label{df:equiv_E}
Given $\tau$ as in \eqref{datum_2_tau_fine}, we will denote by $\Sigma(\tau)$ the multiset in $T$ determined by $\tau$, i.e., the underlying set of $\Sigma(\tau)$ consists of the elements that occur in $(t_1,\ldots,t_q)$, and the multiplicity of each element is the number of times it occurs there.
\end{df}

The group $\Aut(T,\beta)$ acts naturally on $T$, so we can form the semidirect product $T\rtimes \Aut(T,\beta)$, which also acts on $T$: a pair $(u,\alpha)$ sends $t\in T$ to $\alpha(t)u$. Clearly, if $\dim T=2r$, then $T\rtimes\Aut(T,\beta)$ is isomorphic to $\mathrm{ASp}_{2r}(2)$, the affine symplectic group of order $2r$ over the field of two elements (``rigid motions'' of the symplectic space of dimension $2r$).


Using this notation, Theorem 3.17 of \cite{E10} can be recast as follows:

\begin{theorem}\label{th:weak_equiv_fine_phi_grad}
Consider two pairs, $(\Gamma,\vphi)$ and $(\Gamma',\vphi')$, as in Theorem \ref{th:E3.11}, namely, $\Gamma=\M(T,q,s,\tau)$, $\vphi=\vphi_{\tau,\mu}$ and $\Gamma'=\M(T',q',s',\tau')$, $\vphi'=\vphi_{\tau',\mu'}$, where $T=\ZZ_2^{2r}$ and $T'=\ZZ_2^{2r'}$. Then $(\Gamma,\vphi)$ and $(\Gamma',\vphi')$ are weakly equivalent if and only if $r=r'$, $q=q'$, $s=s'$, and $\Sigma(\tau)$ is conjugate to $\Sigma(\tau')$ by the natural action of $T\rtimes\Aut(T,\beta)\cong\mathrm{ASp}_{2r}(2)$.\qed
\end{theorem}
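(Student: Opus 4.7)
The strategy is to prove both directions of the equivalence separately. First, the numerical invariants $r$, $q$, $s$ are determined by the equivalence class of $\Gamma$ as a graded algebra alone (and hence by any weak equivalence of the pair). Reading the description of $\supp\Gamma$ preceding Proposition \ref{prop:diag_grading}, the homogeneous components of $\Gamma$ come in three dimensions: $q+2s$ for the $|T|=2^{2r}$ components of degree in $T$, dimension $1$ for $2s|T|$ components, and dimension $2$ otherwise. Counting recovers $|T|$, $s$, and $q$. We may therefore reduce at once to $r=r'$, $q=q'$, $s=s'$, and identify $T=T'$ and $\cD=\cD'$.

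For sufficiency, I would realize each generator of the $T\rtimes\Aut(T,\beta)$-action on the multiset $\Sigma(\tau)$, along with permutations of the entries of $\tau$, by an explicit weak equivalence. A permutation of the indices $\{1,\ldots,q\}$ is induced by the corresponding permutation of the basis vectors of $V$, which is a self-equivalence of $\cM(\cD,q,s,\tau)$ that reorders $\tau$ accordingly. An automorphism $\alpha\in\Aut(T,\beta)$ lifts to a self-equivalence $\psi_0$ of $\cD$ satisfying $\psi_0(X_t)\in\FF X_{\alpha(t)}$ (by \cite[Proposition 2.7]{EK_Weyl}); applying $\psi_0$ entrywise over $M_{q+2s}(\cD)$ and composing with an inner automorphism of $\cD$ to restore the standard transpose, one obtains a weak equivalence effecting $\tau\mapsto\alpha(\tau)$. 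An element $u\in T$ is realized by combining a suitable $\xi\in\Diag(\Gamma)$ (the character of $\wt{G}^0$ corresponding to $u$ via Proposition \ref{prop:diag_grading}) with a compensating change of basis on the symplectic blocks of $V$; a block-by-block computation shows that the net effect on the antiautomorphism is $\vphi_{\tau,\mu}\mapsto\vphi_{u\tau,\mu'}$ with $u\tau=(ut_1,\ldots,ut_q)$ for some $\mu'$, and $\mu'$ is normalized back to $\mu$ by diagonal rescaling of the $v_{q+2i-1}$.

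For necessity, let $\psi$ be a weak equivalence with $\xi\vphi'=\psi\vphi\psi^{-1}$. Restricting $\psi$ to a $T$-homogeneous component and invoking \cite[Proposition 2.7]{EK_Weyl} extracts some $\alpha\in\Aut(T,\beta)$; composing $\psi$ with the sufficiency-side equivalence built from $\alpha^{-1}$ reduces to the case $\alpha=\id$, in which case the residual equivalence becomes block-monomial in the basis $\{v_i\}$. Translating the equation $\xi\vphi'=\psi\vphi\psi^{-1}$ through this normal form forces $\tau'$ to be a permutation of $u\tau$ for some $u\in T$, i.e., $\Sigma(\tau')$ and $\Sigma(\tau)$ lie in the same $T\rtimes\Aut(T,\beta)$-orbit. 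The main difficulty is bookkeeping around the fact that an arbitrary $\alpha\in\Aut(T,\beta)$ need not preserve the quadratic form $\beta\colon T\to\{\pm1\}$; consequently $\psi_0\circ(-)^t\circ\psi_0^{-1}$ is generally a nonstandard involution of $\cD$, and one must verify that the inner adjustment required to restore the standard transpose, together with a compensating block-diagonal change of basis of $V$, returns $\Phi$ to the form of Theorem \ref{th:E3.11} without disturbing the $T\rtimes\Aut(T,\beta)$-orbit of $\Sigma(\tau)$.
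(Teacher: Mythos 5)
The paper does not actually supply a proof of this statement: it is presented with a terminal $\qed$ as a direct recasting, in the paper's notation, of Theorem~3.17 of \cite{E10}, so there is no in-paper argument to compare yours against. That said, your outline is consistent with the machinery the paper does develop and later deploys in the proof of Theorem~\ref{groups_fine_phi_grad_matrix}(2): there, the degree equation \eqref{eq:Phi1_deg}, namely $t_{\pi(i)}=\alpha_\psi(t_i)\,t_{\alpha_\psi}u_\psi$, is exactly the assertion that $\Sigma(\tau)$ and $\Sigma(\tau')$ lie in the same $T\rtimes\Aut(T,\beta)$-orbit, and it is extracted from $\xi\vphi'=\psi\vphi\psi^{-1}$ via Lemma~\ref{equivalence_phi} in the same way you propose; likewise your explicit sufficiency constructions parallel the representatives $\psi_\pi$ and $\psi_{u,\alpha}$ built in that proof.

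Two points in your sketch should be tightened. First, recovering $r,q,s$ from the graded vector space alone needs a separate remark when $q+2s=2$: in that case the degree-$T$ components have dimension $2$, indistinguishable by dimension from the generic components, so one separates $(q,s)=(2,0)$ from $(q,s)=(0,1)$ by the presence or absence of one-dimensional components and then reads off $|T|$ from the total dimension. Second, your attribution of the $u$-shift is misleading as written: conjugation by any block-diagonal $D=\diag(d_1,\ldots,d_k)$ with $\psi_0=\id$ cannot alter the class in $T$ of the $(i,i)$-entries of $\Phi$, since $T$ is elementary abelian $2$-group and $(\deg d_i)\,t_i\,(\deg d_i)=t_i$. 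The shift $t_i\mapsto t_iu$ is produced entirely by the $\xi\in\Diag(\Gamma')$ allowed on the other side of the weak-equivalence relation (concretely, by the factor $X_u^{-1}$ in the matrix of the form corresponding to $\xi\vphi'$); the ``compensating change of basis on the symplectic blocks'' is only needed to make the $2\times 2$ blocks of $\Phi$ come out proportional to the standard shape, not to move $\tau$. Relatedly, conjugation by a lift $\psi_0$ of $\alpha\in\Aut(T,\beta)$ effects $\tau\mapsto\alpha(\tau)t_\alpha$ (the twisted action, forced by equation \eqref{eq:t_0} and Definition~\ref{df:twisted_action}), not $\tau\mapsto\alpha(\tau)$; you do flag this at the end, and it is indeed harmless here since the extra $t_\alpha$ is absorbed into the $T$-factor, but the earlier sentence ``one obtains a weak equivalence effecting $\tau\mapsto\alpha(\tau)$'' should be corrected to avoid the appearance of proving the (false) statement that the untwisted $\Aut(T,\beta)$-orbit alone is realized by equivalences that fix $\vphi$ on the nose.
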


Let $\psi_0\colon\cD\to\cD$ be an equivalence. Then the map $\psi_0^{-1}\vphi_0\psi_0$ is an involution of the graded algebra $\cD$, which has the same type as $\vphi_0$ (orthogonal). Hence there exists a nonzero homogeneous element $d_0\in\cD$ such that
\begin{equation}\label{eq:d_0}
d_0\vphi_0(d)d_0^{-1}=(\psi_0^{-1}\vphi_0\psi_0)(d)\quad\mbox{for all}\quad d\in\cD.
\end{equation}
Note that $d_0$ is determined up to a scalar in $\FF$. Moreover, $d_0$ is symmetric with respect to $\vphi_0$. By a similar argument, $\psi_0(d_0)$ is also symmetric. Let $\alpha$ be the element of $\Aut(T,\beta)$ corresponding to $\psi_0$ and let $t_0$ be the degree of $d_0$. Then \eqref{eq:d_0} is equivalent to the following:
\begin{equation}\label{eq:t_0}
\beta(t_0,t)\beta(t)=\beta(\alpha(t))\quad\mbox{for all}\quad t\in T,
\end{equation}
so $t_0$ depends only on $\alpha$. Moreover, $\beta(t_0)=\beta(\alpha(t_0))=1$.

\begin{df}\label{df:twisted_action}
For any $\alpha\in\Aut(T,\beta)$, the map $t\mapsto\beta(\alpha^{-1}(t))\beta(t)$ is a character of $T$, so there exists a unique element $t_\alpha\in T$ such that $\beta(t_\alpha,t)=\beta(\alpha^{-1}(t))\beta(t)$ for all $t\in T$. We define a new action of the group $\Aut(T,\beta)$ on $T$ by setting
\begin{equation*}
\alpha\cdot t\bydef\alpha(t)t_\alpha\quad\mbox{for all}\quad\alpha\in\Aut(T,\beta)\;\mbox{ and }\;t\in T.
\end{equation*}
In other words, $\Aut(T,\beta)$ acts through the (injective) homomorphism to $T\rtimes\Aut(T,\beta)$, $\alpha\mapsto(t_\alpha,\alpha)$, and the natural action of $T\rtimes\Aut(T,\beta)$ on $T$.
\end{df}

Comparing this definition with equation \eqref{eq:t_0}, which defines the element $t_0$ associated to $\alpha$, we see that $t_\alpha=\alpha(t_0)$. In particular, $\beta(t_\alpha)=1$. This implies that $\beta(\alpha\cdot t)=\beta(t)$ for all $t\in T$, so the sets
\[
T_+\bydef\{t\in T\;|\;\beta(t)=1\}\quad\mbox{and}\quad T_-\bydef\{t\in T\;|\;\beta(t)=-1\},
\]
which correspond, respectively, to symmetric and skew-symmetric homogeneous components of $\cD$ (relative to $\vphi_0$),
are invariant under the twisted action of $\Aut(T,\beta)$.


Now Proposition 3.8(2) and Theorem 3.22 of \cite{E10} can be recast as follows:

\begin{theorem}\label{th:fine_phi_grad_involution}
Let $\vphi=\vphi_{\tau,\mu}$ be as in Theorem \ref{th:E3.11}. Then $\vphi$ is an involution with $\sgn(\vphi)=\delta$ if and only if
\[
\delta=\beta(t_1)=\ldots=\beta(t_q)=\mu_1=\ldots=\mu_s.
\]
For gradings $\Gamma=\M(T,q,s,\tau)$ with $T=\ZZ_2^{2r}$ and $\Gamma'=\M(T',q',s',\tau')$ with $T'=\ZZ_2^{2r'}$ and for  involutions $\vphi=\vphi_{\tau,\mu}$ and $\vphi'=\vphi_{\tau',\mu'}$, the pairs $(\Gamma,\vphi)$ and $(\Gamma',\vphi')$ are equivalent if and only if $r=r'$, $q=q'$, $s=s'$, $\sgn(\vphi)=\sgn(\vphi')$, and $\Sigma(\tau)$ is conjugate to $\Sigma(\tau')$ by the twisted action of $\Aut(T,\beta)\cong\SP_{2r}(2)$ as in Definition \ref{df:twisted_action}.\qed
\end{theorem}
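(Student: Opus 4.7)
The proof has two pieces: a criterion for when $\vphi=\vphi_{\tau,\mu}$ is an involution of a prescribed sign, and an upgrade of the weak-equivalence classification in Theorem \ref{th:weak_equiv_fine_phi_grad} to an equivalence classification. For the first piece, I would compute $\vphi^2$ directly from $\vphi(X)=\Phi^{-1}({}^tX)\Phi$: one gets $\vphi^2(X) = (\Phi^{-1}\,{}^t\Phi)X(\Phi^{-1}\,{}^t\Phi)^{-1}$, so $\vphi^2=\id$ if and only if $\Phi^{-1}\,{}^t\Phi$ is a central scalar $\delta I$. Evaluating \eqref{formula_antaut_fine} block by block (recalling that $\vphi_0$ is the matrix transpose on $\cD$): the $i$th scalar-type block produces $\beta(t_i)I$ since ${}^tX_{t_i}=\beta(t_i)X_{t_i}$, while the $j$th symplectic-type block produces $\diag(\mu_j^{-1}I,\mu_j I)$. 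Matching all of these to $\delta I$ forces $\beta(t_i)=\delta$ and $\mu_j=\mu_j^{-1}=\delta$ for every admissible $i,j$. The identification $\sgn(\vphi)=\delta$ follows from the standard convention that the involution $X\mapsto\Phi^{-1}\,{}^tX\Phi$ is orthogonal (resp.\ symplectic) exactly when $\Phi$ is $\vphi_0$-symmetric (resp.\ $\vphi_0$-skew-symmetric), which here amounts to ${}^t\Phi=\delta\Phi$.

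For the classification part, the necessity of $r=r'$, $q=q'$, $s=s'$ and of the multiset conjugacy $\Sigma(\tau)\sim\Sigma(\tau')$ under the natural action of $T\rtimes\Aut(T,\beta)$ follows immediately from Theorem \ref{th:weak_equiv_fine_phi_grad}, while $\sgn(\vphi)$ is visibly invariant under conjugation by any algebra isomorphism. The new content is the refinement from the natural action to the twisted action. Given an equivalence $\psi\colon\cR\to\cR'$ of graded algebras with $\vphi'=\psi\vphi\psi^{-1}$, I would decompose it through a self-equivalence $\psi_0\colon\cD\to\cD$ with attached $\alpha\in\Aut(T,\beta)$, together with a $\psi_0$-semilinear isomorphism $V\to V'$. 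The proof of Theorem \ref{th:weak_equiv_fine_phi_grad} already extracts a pair $(u,\alpha)\in T\rtimes\Aut(T,\beta)$ conjugating $\Sigma(\tau)$ to $\Sigma(\tau')$, where $u$ records the graded shift between the bases $\{v_i\}$ and $\{v_i'\}$. The additional requirement that $\psi$ transport $\vphi$ to $\vphi'$ exactly, rather than up to a $\Diag$-twist, translates through \eqref{formula_antaut_fine} into the statement that the sesquilinear form $B'$ is carried to $B$ through $\psi_0$ on the nose. Using \eqref{eq:d_0}--\eqref{eq:t_0}, the obstruction $d_0$ measuring how $\psi_0$ fails to commute with $\vphi_0$ has degree $t_0$ with $\alpha(t_0)=t_\alpha$, and absorbing $d_0$ into the module isomorphism shifts $u$ by exactly $t_\alpha$. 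The resulting identification $u=t_\alpha$ is precisely the twisted action of Definition \ref{df:twisted_action}. The converse is obtained by running the construction in reverse: given $\alpha$ with $\alpha\cdot\Sigma(\tau)=\Sigma(\tau')$, build $\psi_0$ realizing $\alpha$, produce $d_0$ from \eqref{eq:d_0}, and construct a module isomorphism $V\to V'$ that absorbs $d_0$, yielding a genuine equivalence of $\vphi$-graded algebras.

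The main obstacle I anticipate is the bookkeeping in the previous paragraph: precisely tracking how the degree of $d_0$, viewed as a shift in the graded module $V$, combines with the raw natural-action shift $u$ to produce the twisted action rather than some other coset representative. Once this tracking is carried out, the theorem follows by combining Theorem \ref{th:weak_equiv_fine_phi_grad}, the first piece established above, and the classical sign convention identifying $\sgn(\vphi)$.
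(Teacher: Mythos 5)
Your proof is essentially correct and follows the same machinery the paper develops, though the paper itself gives no proof of this statement — it cites Proposition~3.8(2) and Theorem~3.22 of \cite{E10} (note the terminal \qed). Your direct computation of $\vphi^2$ and of ${}^t\Phi$ block-by-block for the involution criterion is right, and the identification $\sgn(\vphi)=\delta$ via ${}^t\Phi=\delta\Phi$ is the standard convention. For the classification, your two ingredients — necessity from Theorem~\ref{th:weak_equiv_fine_phi_grad} plus invariance of $\sgn(\vphi)$, and the refinement from natural to twisted action by tracking the degree of $d_0$ — are precisely the mechanism the paper carries out in the proof of Theorem~\ref{groups_fine_phi_grad_matrix}(3): demanding $\psi\vphi\psi^{-1}=\vphi$ exactly (rather than up to $\Diag$) kills the $u_\psi$-twist, so the degree condition \eqref{eq:Phi1_deg_} reads $t_{\pi(i)}=\alpha_\psi(t_i)t_{\alpha_\psi}$, which is exactly the twisted action. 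One small imprecision to tighten: the element $u$ you attach to the weak equivalence is not purely a ``graded shift between bases'' — in the paper's bookkeeping it is the combination $t_\alpha u_\psi$ of the $\Diag$-twist $u_\psi$ and the degree $t_\alpha$ of (the $\psi_0$-image of) $d_0$, and it is the vanishing of $u_\psi$ under exact equivalence that collapses it to $t_\alpha$. Also, $\sgn(\vphi)=\sgn(\vphi')$ is redundant when $q>0$ (it is forced by $\beta(\alpha\cdot t)=\beta(t)$), so it only carries content for $q=0$; worth noting in the sufficiency step. With the bookkeeping you deferred spelled out along these lines, the argument is complete.
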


\subsection{Automorphism groups of fine $\vphi$-gradings on matrix algebras}

We are now going to study automorphisms of the fine $\vphi$-gradings $\M(T,q,s,\tau)$. We begin with some general observations. Let $\cD$ and $\cD'$ be graded division algebras, with the same grading group $G$. Let $V$ be a graded right $\cD$-module and $V'$ a graded right $\cD'$-module, both of nonzero finite rank. By an {\em isomorphism from $(\cD,V)$ to $(\cD',V')$} we mean a pair $(\psi_0,\psi_1)$ where $\psi_0\colon\cD\to\cD'$ is an isomorphism of graded algebras, $\psi_1\colon V\to V'$ is an isomorphism of graded vector spaces over $\FF$, and $\psi_1(vd)=\psi_1(v)\psi_0(d)$ for all $v\in V$ and $d\in\cD$.

Let $\cR=\End_{\cD}(V)$ and $\cR'=\End_{\cD'}(V')$. If $\psi\colon\cR\to\cR'$ is an isomorphism of graded algebras, then there exist $g\in G$ and an isomorphism $(\psi_0,\psi_1)$ from $(\cD,V^{[g]})$ to $(\cD',V')$ such that $\psi_1(rv)=\psi(r)\psi_1(v)$ for all $r\in\cR$ and $v\in V$ (see e.g. \cite[Proposition 2.5]{E10}). Here $V^{[g]}$ denotes a shift of grading: the ($\cR$,$\cD$)-bimodule structure of $V^{[g]}$ is the same as that of $V$, but we set $V^{[g]}_h=V_{hg^{-1}}$ for all $h\in G$. Conversely, given an isomorphism $(\psi_0,\psi_1)$ of the above pairs, there exists a unique isomorphism $\psi\colon\cR\to\cR'$ of graded algebras such that $\psi_1(rv)=\psi(r)\psi_1(v)$ for all $r\in\cR$ and $v\in V$. Two isomorphisms $(\psi_0,\psi_1)$ and $(\psi'_0,\psi'_1)$ determine the same isomorphism $\cR\to\cR'$ if and only if there exists a nonzero homogeneous $d\in\cD'$ such that $\psi'_0(x)=d^{-1}\psi_0(x)d$ and $\psi'_1(v)=\psi_1(v)d$ for all $x\in\cD$ and $v\in V$.

\begin{lemma}\label{equivalence_phi}
Let $\psi\colon\cR\to\cR'$ be the isomorphism of graded algebras determined by an isomorphism $(\psi_0,\psi_1)$ from $(\cD,V^{[g]})$ to $(\cD',V')$. Suppose that the graded algebras $\cR$ and $\cR'$ admit anti-automorphisms $\vphi$ and $\vphi'$, respectively, determined by a $\vphi_0$-sesquilinear form $B\colon V\times V\to\cD$ and a $\vphi'_0$-sesquilinear form $B'\colon V'\times V'\to\cD'$. Then $\vphi'=\psi\vphi\psi^{-1}$ if and only if there exists a nonzero homogeneous $d_0\in\cD$ such that
\begin{equation}\label{eq:B1_B2}
B'(\psi_1(v),\psi_1(w))=\psi_0\big(d_0 B(v,w)\big)\quad\mbox{for all}\quad v,w\in V.
\end{equation}
Moreover, $d_0\vphi_0(d)d_0^{-1}=(\psi_0^{-1}\vphi'_0\psi_0)(d)$ for all $d\in\cD$.
\end{lemma}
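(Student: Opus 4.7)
The plan is to reduce the lemma to a rigidity statement: two nondegenerate sesquilinear forms on $V$ that induce the same adjoint anti-automorphism of $\cR$ must differ by a nonzero homogeneous scalar from $\cD$, together with the corresponding conjugation relation between their involutions.

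\textbf{Pullback form.} First I would introduce the form
\[
\wt{B}(v,w)\bydef\psi_0^{-1}\bigl(B'(\psi_1(v),\psi_1(w))\bigr),\qquad v,w\in V.
\]
Using $\psi_1(vd)=\psi_1(v)\psi_0(d)$ together with the $\vphi'_0$-sesquilinearity of $B'$, a direct check shows that $\wt B$ is $\wt\vphi_0$-sesquilinear for the involution $\wt\vphi_0\bydef\psi_0^{-1}\vphi'_0\psi_0$ of $\cD$, and that $\wt B$ inherits from $B'$ the properties of being nondegenerate, homogeneous, and balanced. Let $\wt\vphi$ denote the anti-automorphism of $\cR$ determined by $\wt B$.

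\textbf{Translation of the hypothesis.} Combining $\psi_1(rv)=\psi(r)\psi_1(v)$ with the definition of $\vphi'$ as the $B'$-adjoint, one obtains for all $r\in\cR$ and $v,w\in V$
\[
\wt B(rv,w)=\psi_0^{-1}B'(\psi(r)\psi_1(v),\psi_1(w))=\psi_0^{-1}B'(\psi_1(v),\vphi'(\psi(r))\psi_1(w))=\wt B\bigl(v,(\psi^{-1}\vphi'\psi)(r)\,w\bigr).
\]
Comparing with the defining identity $\wt B(rv,w)=\wt B(v,\wt\vphi(r)w)$ and invoking the nondegeneracy of $\wt B$, one concludes $\wt\vphi=\psi^{-1}\vphi'\psi$. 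Thus $\vphi'=\psi\vphi\psi^{-1}$ is equivalent to $\wt\vphi=\vphi$.

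\textbf{Rigidity step.} It remains to prove that $\wt\vphi=\vphi$ is equivalent to the existence of a nonzero homogeneous $d_0\in\cD$ with $\wt B=d_0\cdot B$ and $\wt\vphi_0(d)=d_0\vphi_0(d)d_0^{-1}$ for all $d\in\cD$. The ``if'' direction is an immediate verification: the conjugation relation is exactly what is needed to check that $d_0 B$ is $\wt\vphi_0$-sesquilinear, and then $\wt B(rv,w)=d_0 B(v,\vphi(r)w)=\wt B(v,\vphi(r)w)$. For the ``only if'' direction, I would fix a homogeneous $\cD$-basis $\{v_1,\ldots,v_n\}$ of $V$, represent $B$ and $\wt B$ by invertible matrices $\beta,\wt\beta\in M_n(\cD)$, and rewrite $\wt\vphi(r)=\vphi(r)$ as
\[
M\,\vphi_0(r^T)=\wt\vphi_0(r^T)\,M,\qquad M\bydef\wt\beta\beta^{-1},
\]
for every $r\in M_n(\cD)=\cR$. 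Letting $r$ run over elementary matrices $E_{kl}\otimes d$ and matching entries forces $M$ to be diagonal, then scalar: $M=d_0\cdot I$ with $d_0\in\cD$ homogeneous and nonzero, and the same diagonal equation yields $\wt\vphi_0(d)=d_0\vphi_0(d)d_0^{-1}$. Translating $\wt B=d_0 B$ back through the definition of $\wt B$ recovers the identity $B'(\psi_1(v),\psi_1(w))=\psi_0(d_0 B(v,w))$ required by the lemma.

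\textbf{Main obstacle.} No single step is conceptually hard; the genuine care is in the bookkeeping with the graded structure in the rigidity step, ensuring that $d_0$ emerges homogeneous of the correct degree (determined by the homogeneity degrees of $B,B',\psi_0,\psi_1$) and that the resulting conjugation relation automatically makes $\wt\vphi_0$ an involution (as it must be, since $\vphi'_0$ is). This is essentially the graded analogue of the classical fact that a nondegenerate sesquilinear form on a free module is determined by its adjoint up to a scalar, specialized to the graded division algebra $\cD$.
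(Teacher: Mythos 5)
Your proof is correct and follows essentially the same route as the paper: pull back $B'$ along $(\psi_0,\psi_1)$ to a form $\wt{B}$ on $V$, check that $\wt{B}$ is $(\psi_0^{-1}\vphi'_0\psi_0)$-sesquilinear and realizes $\psi^{-1}\vphi'\psi$ as its adjoint, then compare with $(\vphi_0,B)$. The only difference is that at this last step the paper simply invokes the previously stated fact that the pair $(\vphi_0,B)$ is determined by $\vphi$ up to replacing $\vphi_0$ by $d\vphi_0(\cdot)d^{-1}$ and $B$ by $dB$, whereas you re-derive this rigidity by the matrix computation showing $M=\wt\beta\beta^{-1}$ must be a homogeneous scalar $d_0 I$.
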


\begin{proof}
Set $\vphi''\bydef\psi^{-1}\vphi'\psi$ and $B''(v,w)\bydef\psi_0^{-1}\big(B'(\psi_1(v),\psi_1(w))\big)$ for all $v,w\in V$. Then we compute:
\[
\begin{split}
B''(v,wd)&=\psi_0^{-1}\big(B'(\psi_1(v),\psi_1(w)\psi_0(d))\big)\\
&=\psi_0^{-1}\big(B'(\psi_1(v),\psi_1(w))\psi_0(d)\big)=B''(v,w)d;\\
B''(vd,w)&=\psi_0^{-1}\big(B'(\psi_1(v)\psi_0(d),\psi_1(w))\big)\\
&=\psi_0^{-1}\big(\vphi'_0(\psi_0(d))B'(\psi_1(v),\psi_1(w))\big)=(\psi_0^{-1}\vphi'_0\psi_0)(d)B''(v,w);\\
B''(v,\vphi''(r)w)&=\psi_0^{-1}\big(B'(\psi_1(v),\psi(\vphi''(r))\psi_1(w))\big)\\
&=\psi_0^{-1}\big(B'(\psi_1(v),\vphi'(\psi(r))\psi_1(w))\big)\\
&=\psi_0^{-1}\big(B'(\psi(r)\psi_1(v),\psi_1(w))\big)=B''(rv,w).
\end{split}
\]
We have shown that $B''$ is a $(\psi_0^{-1}\vphi'_0\psi_0)$-sesquilinear form corresponding to $\vphi''$. Hence $\vphi''=\vphi$ if and only if there exists a nonzero homogeneous element $d_0\in\cD$ such that $B''=d_0 B$, i.e., equation \eqref{eq:B1_B2} holds.
\end{proof}

Now consider $\Gamma=\M(T,q,s,\tau)$ and $\vphi=\vphi_{\tau,\mu}$ as in Theorem \ref{th:E3.11}. There are two kinds of automorphism groups that we will need. Namely, there is
\[
\Aut^*(\Gamma,\vphi)\bydef\{\psi\in\Aut(\Gamma)\;|\;\psi\vphi\psi^{-1}=\xi\vphi\;\mbox{ for some }\;\xi\in\Diag(\Gamma)\},
\]
which will be relevant to computing the Weyl group of the corresponding fine grading on the simple Lie algebra of type $A$, and there is
\[
\Aut(\Gamma,\vphi)\bydef\{\psi\in\Aut(\Gamma)\;|\;\psi\vphi\psi^{-1}=\vphi\},
\]
which will be relevant to computing the Weyl groups of fine gradings on the simple Lie algebras of types $B$, $C$ and $D$. Hence, we are intersted in $\Aut(\Gamma,\vphi)$ only if $\vphi$ is an involution. Similarly, define
\[
\Stab(\Gamma,\vphi)\bydef\{\psi\in\Stab(\Gamma)\;|\;\psi\vphi\psi^{-1}=\vphi\}.
\]
(We could also define $\Stab^*(\Gamma,\vphi)$, but we will not need it.)

Recall that $\Gamma$ is the grading on $\cR=\End_\cD(V)$ where $\cD$ is a matrix algebra equipped with a division grading with support $T=\ZZ_2^{2r}$ and bicharacter $\beta$, and $V$ has a $\cD$-basis $\{v_1,\ldots,v_k\}$ with $\deg v_i=\wt{g}_i$ and $k=q+2s$. We will use the universal group $\wt{G}^0$ for the grading $\Gamma$. If $\psi\colon\cR\to\cR$ is an equivalence, then there exists an automorphism $\alpha$ of the group $\wt{G}^0$ such that $\psi$ sends ${}^\alpha\Gamma$ to $\Gamma$. In other words, $\psi\colon\cR'\to\cR$ is an isomorphism of graded algebras where $\cR'$ is $\cR$ as an algebra, but equipped with the grading ${}^\alpha\Gamma$. Define $\cD'$  similarly to $\cR'$, using the restriction of $\alpha$ to $T\subset\wt{G}^0$. The support of $\cD'$ is $T'=\alpha(T)$. Since  $V^{[\wt{g}_1^{-1}]}$ is $\wt{G}^0$-graded, we can also define $V'$ so that $\cR'=\End_{\cD'}(V')$ as a graded algebra. Therefore, $\psi$ is determined by $(\psi_0,\psi_1)$ where $\psi_0\colon\cD'\to\cD$ is an isomorphism of graded algebras and $\psi_1\colon V'\to V$ is an isomorphism up to a shift of grading. Hence $T'=T$ and $\psi_0\in\Aut(\Gamma_0)$, so $\psi_0(X_t)\in\FF X_{\alpha(t)}$, for all $t\in T$, and the map $\alpha\colon T\to T$ belongs to $\Aut(T,\beta)\cong \SP_{2r}(2)$. Also, if $\Psi$ is the matrix of $\psi_1$ relative to $\{v_1,\ldots,v_k\}$, we have
\[
\psi(X)=\Psi\psi_0(X)\Psi^{-1}\quad\mbox{for all}\quad X\in\cR.
\]
Since all $\wt{g}_i$ are distinct modulo $T$, matrix $\Psi$ necessarily has the form $\Psi=PD$ where $P$ is a permutation matrix and $D=\diag(d_1,\ldots,d_k)$ where $d_i$ are nonzero homogeneous elements of $\cD$. Moreover, the permutation $\pi\in\sg(k)$ corresponding to $P$ and the coset of $\psi_0$ modulo $\Stab(\Gamma_0)$ are uniquely determined by $\psi$. Hence, we have a well-defined homomorphism
\[
\Aut(\Gamma)\to\sg(k)\times\Aut(T,\beta)
\]
that sends $\psi$ to the corresponding $(\pi,\alpha)$.

Now we turn to the anti-automorphism $\vphi\colon\cR\to\cR$, which is given by the adjoint with respect to a $\vphi_0$-sesquilinear form $B$ on $V$ where $\vphi_0\colon\cD\to\cD$ is given by matrix transpose, $X_t\mapsto\beta(t)X_t$ for all $t\in T$. Recall that such $B$ is determined up to a scalar in $\FF$. We can take for $B$ the $\vphi_0$-sesquilinear form whose matrix with respect to $\{v_1,\ldots,v_k\}$ is $\Phi$ displayed in Theorem \ref{th:E3.11}. Pick $\xi\in\Diag(\Gamma)$ and let $B'$ be a $\vphi_0$-sesquilinear form on $V$ corresponding to $\xi\vphi$. By Lemma \ref{equivalence_phi}, $\psi$ satisfies $\psi\vphi\psi^{-1}=\xi\vphi$ if and only if condition \eqref{eq:B1_B2} holds for some nonzero homogeneous $d_0\in\cD$. Clearly, \eqref{eq:B1_B2} is equivalent to \eqref{eq:d_0} and
\begin{equation}\label{eq:P1_P2}
\wh{\Phi}=\psi_0(d_0\Phi),
\end{equation}
where $\wh{\Phi}$ is the matrix of $B'$ relative to $\{\psi_1(v_1),\ldots,\psi_1(v_k)\}$.
Recall that \eqref{eq:d_0} is equivalent to condition \eqref{eq:t_0} on $t_0\bydef\deg d_0$.
To summarize, $\psi$ satisfies $\psi\vphi\psi^{-1}=\xi\vphi$ if and only if
\begin{equation}\label{eq:P1_P2_repeat}
\wh{\Phi}=d_0\psi_0(\Phi)
\end{equation}
for some $d_0\in\cD$ of degree $t_\alpha$ as in Definition \ref{df:twisted_action}
(we have replaced $\psi_0(d_0)$ in \eqref{eq:P1_P2} by $d_0$ to simplify notation).

The matrix of $B'$ relative to $\{v_1,\ldots,v_k\}$ is $\Phi(D')^{-1}$ where $\xi(X)=D'X(D')^{-1}$, for all $X\in\cR$, with $D'$ of the form given by Proposition \ref{prop:diag_grading}: $D'=\diag(\nu_1 X_u,\ldots,\nu_k X_u)$ for some $u\in T$ and $\nu_i\in\FF^\times$ satisfying
\begin{equation}\label{eq:nu_i}
\nu_1^2\beta(u,t_1)=\ldots=\nu_q^2\beta(u,t_q)=\nu_{q+1}\nu_{q+2}=\ldots=\nu_{q+2s-1}\nu_{q+2s}.
\end{equation}
It follows at once that, for $\psi\in\Aut^*(\Gamma,\vphi)$, the permutation $\pi$ must preserve the set $\{1,\ldots,q\}$ and the pairing of $q+2i-1$ with $q+2i$, for $i=1,\dots,s$. It is  convenient to introduce the group $W(s)\bydef\ZZ_2^s\rtimes\sg(s)$ (i.e., the wreath product of $\sg(s)$ and $\ZZ_2$), which will be regarded as the group of permutations on $\{q+1,\ldots,q+2s\}$ that respect the block decomposition  $\{q+1,q+2\}\cup\ldots\cup\{q+2s-1,q+2s\}$. The reason for the notation $W(s)$ is that $\ZZ_2^s\rtimes\sg(s)$ is the classical Weyl group of type $B_s$ or $C_s$ (and also the extended Weyl group of type $D_s$ if $s>4$). By the above discussion, we have a homomorphism:
\begin{equation}\label{eq:homom_f1}
\Aut^*(\Gamma,\vphi)\to\sg(q)\times W(s)\times\Aut(T,\beta).
\end{equation}

We need some more notation to state the main result of this section. Let $\Sigma$ be a multiset of cardinality $q$ and let $m_1,\ldots,m_\ell$ be the multiplicities of the elements of $\Sigma$, written in some order. Thus, $m_i$ are positive integers whose sum is $q$. We will denote by $\sg\Sigma$ the subgroup $\sg(m_1)\times\cdots\times\sg(m_\ell)$ of $\sg(q)$, which may be thought of as ``interior symmetries'' of $\Sigma$. For a multiset $\Sigma$ in $T$, let $\Aut^*\Sigma$ be the stabilizer of $\Sigma$ under the natural action of $T\rtimes\Aut(T,\beta)$ on $T$, i.e., $\Aut^*\Sigma$ is the set of ``rigid motions'' of the symplectic space $T$ that permute the elements of $\Sigma$ preserving multiplicity. These are ``exterior symmetries'' of $\Sigma$. Note that each bijection $\theta\colon T\to T$ that stabilizes $\Sigma$ determines an element of $\sg(q)$ that permutes the blocks of sizes $m_1,\ldots,m_\ell$ in the same way $\theta$ permutes the elements of $\Sigma$ (thus, only blocks of equal size may be permuted) and preserves the order within each block; we will call this permutation the {\em restriction of $\theta$ to $\Sigma$}. Hence, we obtain a restriction homomorphism $\Aut^*\Sigma\to\sg(q)$. In particular, $\Aut^*\Sigma$ acts naturally on $\sg\Sigma$ by permuting factors (of equal order). Finally, let $\Aut\Sigma$ be the stabilizer of $\Sigma$ under the twisted action of $\Aut(T,\beta)$ on $T$ as in Definition \ref{df:twisted_action}. Note that $\Aut\Sigma$ may be regarded as a subgroup of $\Aut^*\Sigma$.

\begin{theorem}\label{groups_fine_phi_grad_matrix}
Let $\Gamma=\M(T,q,s,\tau)$ and let $\vphi$ be as in Theorem \ref{th:E3.11} such that $\Gamma$ is a fine $\vphi$-grading. Let $\Sigma=\Sigma(\tau)$, so $|\Sigma|=q$.
\begin{enumerate}
\item[1)] $\Stab(\Gamma,\vphi)=\Diag(\Gamma)$.
\item[2)] $\Aut^*(\Gamma,\vphi)/\Stab(\Gamma,\vphi)$ is isomorphic to an extension of the group\\ $\big((T^{q+s-1}\times\ZZ_2^s)\rtimes(\sg\Sigma\times\sg(s)\big)\rtimes \Aut^*\Sigma$
by $\ZZ_2^{q+s-1}$, with the following actions: $T^{q+s-1}$ is identified with $T^{q+s}/T$ and $\ZZ_2^{q+s-1}$ is identified with $\ZZ_2^{q+s}/\ZZ_2$, where $T$ and $\ZZ_2$ are imbedded diagonally, then
\begin{itemize}
\item $\sg\Sigma\subset\sg(q)$ acts on $T^{q+s}/T$ and $\ZZ_2^{q+s}/\ZZ_2$ by permuting the first $q$ components and trivially on $\ZZ_2^s$;
\item $\sg(s)$ acts on $T^{q+s}/T$ and $\ZZ_2^{q+s}/\ZZ_2$ by permuting the last $s$ components and naturally on $\ZZ_2^s$;
\item $\Aut^*\Sigma$ acts on $\sg\Sigma$ and $\ZZ_2^{q+s}/\ZZ_2$ through the restriction homomorphism $\Aut^*\Sigma\to\sg(q)$, trivially on $\sg(s)$, and as follows on $(T^{q+s}/T)\times\ZZ_2^s$: an element $(u,\alpha)\in\Aut^*\Sigma\subset T\rtimes\Aut(T,\beta)$ sends
a pair ${\big((u_1,\ldots,u_q,u_{q+1},\ldots,u_{q+s})T,\ul{x}\big)\in(T^{q+s}/T)\times\ZZ_2^s}$ to\\
$
\big((\alpha(u_{\pi^{-1}(1)}),\ldots,\alpha(u_{\pi^{-1}(q)}),\alpha(u_{q+1})u^{x_1},\ldots,\alpha(u_{q+s})u^{x_s})T,\ul{x}\big),
$\\
where $\pi$ is the image of $(u,\alpha)$ under the restriction homomorphism;
\item $T^{q+s-1}\times\ZZ_2^s$ acts trivially on $\ZZ_2^{q+s-1}$.
\end{itemize}
\item[3)] If $\vphi$ is an involution, then $\Aut(\Gamma,\vphi)/\Stab(\Gamma,\vphi)$ is isomorphic to\\ $\big((T^{q+s-1}\times\ZZ_2^s)\rtimes(\sg\Sigma\times\sg(s)\big)\rtimes \Aut\Sigma$, with the following actions: $T^{q+s-1}$ is identified with $T^{q+s}/T$, where $T$ is imbedded diagonally, then
\begin{itemize}
\item $\sg\Sigma\subset\sg(q)$ acts on $T^{q+s}/T$ by permuting the first $q$ components and trivially on $\ZZ_2^s$;
\item $\sg(s)$ acts on $T^{q+s}/T$ by permuting the last $s$ components and naturally on $\ZZ_2^s$;
\item $\Aut\Sigma$ acts on $\sg\Sigma$ as a subgroup of $\Aut^*\Sigma$, i.e., through the twisted action on $T$ (Definition \ref{df:twisted_action}) and restriction to $\Sigma$, trivially on $\sg(s)$, and as follows on $(T^{q+s}/T)\times\ZZ_2^s$: an element $\alpha\in\Aut\Sigma\subset\Aut(T,\beta)$ sends
a pair ${\big((u_1,\ldots,u_q,u_{q+1},\ldots,u_{q+s})T,\ul{x}\big)\in(T^{q+s}/T)\times\ZZ_2^s}$ to\\
$
\big((\alpha(u_{\pi^{-1}(1)}),\ldots,\alpha(u_{\pi^{-1}(q)}),\alpha(u_{q+1})t_\alpha^{x_1},\ldots,\alpha(u_{q+s})t_\alpha^{x_s})T,\ul{x}\big),
$\\
where $\pi$ is the image of $(t_\alpha,\alpha)$ under the restriction to $\Sigma$.
\end{itemize}
\end{enumerate}
\end{theorem}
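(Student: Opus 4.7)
The plan is to analyze both automorphism groups via the homomorphism \eqref{eq:homom_f1} into $\sg(q)\times W(s)\times \Aut(T,\beta)$, determining its image and its kernel modulo $\Stab(\Gamma,\vphi)$ separately, after first establishing $\Stab(\Gamma,\vphi)=\Diag(\Gamma)$.

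For \textbf{part 1}, any $\psi\in\Stab(\Gamma)$ induces the identity on the generators of $\wt{G}^0$, so $\pi=\id$ and $\alpha=\id$. Therefore $\psi_0=\Ad(X_v)$ for a unique $v\in T$, $\Psi=\diag(d_1,\ldots,d_k)$, and $\psi=\Ad(D')$ with $D'=\diag(d_iX_v)$. Imposing $\psi\vphi\psi^{-1}=\vphi$ and applying Lemma~\ref{equivalence_phi} gives $\wh\Phi=d_0\psi_0(\Phi)$ with $d_0\in\FF^\times I$ (since $t_\alpha=e$). Comparing this identity block by block in \eqref{formula_antaut_fine} forces the entries $d_iX_v$ to share a common $T$-degree and the scalars to satisfy \eqref{eq:relations_diag}, so $\psi$ has the form given in Proposition~\ref{prop:diag_grading}; the reverse inclusion $\Diag(\Gamma)\subset\Stab(\Gamma,\vphi)$ is immediate.

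For the \textbf{image} of \eqref{eq:homom_f1}, the block-wise version of $\wh\Phi=d_0\psi_0(\Phi)$ says that $(u,\alpha)\in T\rtimes\Aut(T,\beta)$, with $u$ the $T$-component of $\xi$, must stabilize $\Sigma=\Sigma(\tau)$ as a multiset, with $\pi|_{\{1,\ldots,q\}}$ factoring as the restriction of $(u,\alpha)$ to $\Sigma$ composed with some $\sigma\in\sg\Sigma$. The restriction $\pi|_{\{q+1,\ldots,q+2s\}}$ gives an element of $\sg(s)$ permuting the pairs, whereas the $s$ sign flips inside pairs come from the kernel rather than the image. Hence the image is $(\sg\Sigma\rtimes\Aut^*\Sigma)\times\sg(s)$ in part 2; in part 3 the condition $\xi=\id$ forces $u=t_\alpha$ (by \eqref{eq:t_0}), so $\Aut^*\Sigma$ is replaced by $\Aut\Sigma$. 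For the \textbf{kernel} modulo $\Diag(\Gamma)$, one has $\psi=\Ad(D')$ with $D'=\diag(\mu_iX_{w_i})$; the matching condition yields $w_{q+2i-1}=w_{q+2i}$ for every pair, $s$ independent pair-sign flips ($\ZZ_2^s$), and a system of quadratic relations on the $\mu_i$'s determined up to $q+s$ additional signs coming from the freedom in $\xi$ (cf.~\eqref{eq:nu_i}). Quotienting by $\Diag(\Gamma)$ absorbs one global $T$-shift ($T^{q+s}\to T^{q+s-1}$) and one global scalar, leaving the remaining $q+s-1$ signs as the $\ZZ_2^{q+s-1}$ extension in part 2; in part 3 these signs are killed by $\xi=\id$.

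Finally, the claimed semidirect actions of $\sg\Sigma\times\sg(s)$ and $\Aut^*\Sigma$ (resp.~$\Aut\Sigma$) on the kernel factors follow by conjugating a generic $D'$ by canonical lifts: permutation matrices for $\sg\Sigma\times\sg(s)$, and for $\Aut^*\Sigma$ (resp.~$\Aut\Sigma$) the lift of $\alpha$ afforded by \cite[Proposition~2.7]{EK_Weyl} together with a compensating block-diagonal Pauli matrix. The shift of the last $s$ coordinates by $u^{x_i}$ (resp.~$t_\alpha^{x_i}$) in the $\Aut^*\Sigma$-action (resp.~$\Aut\Sigma$-action) arises from the block-pair swap induced by this lift. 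The main difficulty lies in the careful sign bookkeeping that produces the $\ZZ_2^{q+s-1}$ extension in part 2 and verifies its disappearance in part 3, which requires tracking the interplay between the relations \eqref{eq:nu_i} on $\xi$ and the matching identity in Lemma~\ref{equivalence_phi}.
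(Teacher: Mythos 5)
Your overall plan — analyze both groups via the homomorphism to $\sg(q)\times W(s)\times\Aut(T,\beta)$, then compute image and kernel modulo $\Stab(\Gamma,\vphi)$ — is indeed the route the paper takes, and your identification of the image condition ($(u,\alpha)$ stabilizing $\Sigma$ up to $\sg\Sigma$) and the final $\ZZ_2^{q+s-1}$ count are correct. But there is a genuine error in your kernel/image split, and a gap in part~1.

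The factual error: you assert that the $\ZZ_2^s$ of ``pair flips'' lies in the kernel of $f$ rather than the image. This is wrong. A transposition $\pi=(q+2j-1,q+2j)$ corresponds to $\psi=\Ad(P)$ with $P$ a permutation matrix and $\pi\ne\id$, so it has a nontrivial $W(s)$-component and cannot be in $\ker f$. Indeed, such $\psi$ swaps the one-dimensional components $\cR_{z_{q+2j-1,q+2j,t}}$ and $\cR_{z_{q+2j,q+2j-1,t}}$, so it is not even in $\Stab(\Gamma)$. The kernel of $f$ consists only of diagonal conjugations $\Ad\bigl(\diag(d_1,\ldots,d_k)\bigr)$ with $\pi=\id$, $\alpha=\id$, $u=e$; modulo $N$ (the subgroup with all $\deg d_i$ equal) this gives $T^{q+s}/T$ only, and $N/\Stab(\Gamma,\vphi)\cong\ZZ_2^{q+s}/\ZZ_2$. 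The $\ZZ_2^s$ is supplied by explicit representatives $\psi_\pi$ of $\pi\in W(s)$, and it sits next to $T^{q+s-1}$ in the final semidirect product only because the commutator of a pair-flip $\psi_\pi$ with a representative $\psi_{u,\alpha}$ of $\Aut^*\Sigma$ lands in $K/N$ — precisely the source of the $u^{x_i}$ shifts. Treating $\ZZ_2^s$ as part of the kernel mislabels this interaction; it happens to produce the right abstract answer, but the derivation of the actions is not justified.

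The gap in part~1: the claim that ``$\psi\in\Stab(\Gamma)$ forces $\pi=\id$'' does not follow merely from triviality of the action on the generators of $\wt G^0$. The permutation $\pi$ and the coset of $\psi_0$ in $\Aut(T,\beta)$ control only the action on $\wt G^0/T$, and that action fails to be faithful exactly when $q=2,\,s=0$ (where $\pi=(12)$ fixes $z_1T$). In that case one can even arrange $\deg d_1\deg d_2=t_1t_2$ so that $\Ad(PD)$ acts trivially on all of $\wt G^0$, i.e.\ lies in $\Stab(\Gamma)$ with $\pi\neq\id$; it is only the compatibility with $\vphi$ (via the degree of the form $B$, using $t_1\neq t_2$ from fineness) that rules this out. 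Your argument invokes the $\vphi$-condition only after assuming $\pi=\id$, so this exceptional case is not covered. Both points need to be repaired before the computation is a proof.
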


\begin{proof}
1) If $\psi\in\Stab(\Gamma,\vphi)$, then $\Psi=PD$ where $P$ corresponds to $\pi\in\sg(q)\times W(s)$, and $\psi_0\in\Stab(\Gamma_0)$. Adjusting $D$ if necessary, we may assume $\psi_0=\id$. We claim that $\pi$ is the trivial permutation. Since $\psi$ does not permute the homogeneous components of $\Gamma$, $\pi$ must act trivially on $\wt{G}^0/T$. So, we consider the action of $\sg(q)\times W(s)$ on $\wt{G}^0/T$ in terms of the generators $z_i$ ($i=1,\ldots,q-1$ if $s=0$ and $i=1,\ldots,q+s$ if $s>0$) that were introduced after Definition \ref{construct_phi_fine}.

$\sg(q)$ acts trivially on the subgroup $\langle z_{q+1},\ldots,z_{q+s}\rangle$ and via the action of the classical Weyl group of type $A_{q-1}$, taken modulo $2$, on the subgroup $\langle z_1,\ldots,z_{q-1}\rangle\cong\ZZ_2^{q-1}$ where $z_i$ is identified with the element $\veps_i-\veps_{i+1}$, with $\{\veps_1,\ldots,\veps_q\}$ being the standard basis of $\ZZ_2^q$, on which $\sg(q)$ acts naturally.

$W(s)$ acts trivially on the subgroup $\langle z_1,\ldots,z_{q-1}\rangle$ and via the action of the classical Weyl group of type $B_s$ or $C_s$ on the subgroup $\langle z_{q+1},\ldots,z_{q+s}\rangle\cong\ZZ^s$ where $z_{q+i}$ is identified with the element $\veps_i-\veps_{i+1}$ for $i\ne s$ and $z_{q+s}$ is identified with the element $2\veps_1$, with  $\{\veps_1,\ldots,\veps_s\}$ being the standard basis of $\ZZ^s$. The easiest way to see this is to extend $\wt{G}$ by adding a new element $\wh{g}_0$ satisfying $(\wh{g}_0)^{-2}=\wt{g}_1\wt{g}_2$ and set $\wh{g}_i=\wt{g}_i\wh{g}_0$. The elements of the subgroup $\wt{G}^0$ are not affected if we replace $\wt{g}_i$ by $\wh{g}_i$, but then we have $\wh{g}_{q+2j}=\wh{g}_{q+2j-1}^{-1}$ for $j=1,\ldots,s$, so we can map $\wh{g}_{q+2j-1}$ to $\veps_j$ and $\wh{g}_{q+2j}$ to $-\veps_j$.

Note that the action of $W(s)$ on $\langle z_{q+1},\ldots,z_{q+s}\rangle$ is always faithful, while the action of $\sg(q)$ on $\langle z_1,\ldots,z_{q-1}\rangle$ is faithful unless $q=2$. If $q>0$ and $s>0$, then we also have the generator $z_q$, on which $\pi\in\sg(q)\times W(s)$ acts in this way (note that $\pi(q)\le q$ and $\pi(q+1)>q$):
\[
z_q\mapsto\left\{\begin{array}{ll}
z_{\pi(q)}\cdots z_q z_{q+1}\cdots z_{q+j}&\mbox{if }\pi(q+1)=q+2j+1;\\
z_{\pi(q)}\cdots z_q z_{q+1}^{-1}\cdots z_{q+j}^{-1}z_{q+s}&\mbox{if }\pi(q+1)=q+2j+2.
\end{array}\right.
\]
If $\pi$ acts trivially on $\langle z_{q+1},\ldots,z_{q+s}\rangle$, then $\pi(q+1)=q+1$. Hence, if $\pi$ also acts trivially on $z_q$, then $\pi(q)=q$. It follows that the action of $\sg(q)\times W(s)$ on $\wt{G}^0/T$ is faithful unless $q=2$ and $s=0$. In this remaining case, we have $\tau=(t_1,t_2)$ where $t_1\ne t_2$ (otherwise $\Gamma$ is not a fine $\vphi$-grading). If $\psi_1$ yields $\pi=(12)$, then $\psi_1(v_1)=v_2 d_1$ and $\psi(v_2)=v_1 d_2$ for some nonzero homogeneous $d_1,d_2\in\cD$, but then $B(\psi_1(v_1),\psi_1(v_1))$ has degree $t_2$, while $B(v_1,v_1)$ has degree $t_1$. This contradicts \eqref{eq:P1_P2_repeat}, because here we have $\psi_0=\id$, $d_0\in\FF^\times$ and $B'=B$. The proof of the claim is complete.

Since $P=I$, we have $\Psi=\diag(d_1,\ldots,d_k)$, where the $d_i$ must necessarily have the same degree, say, $t$, so $\Psi=\diag(\lambda_1,\ldots,\lambda_k)\ot X_t$, but then \eqref{eq:P1_P2_repeat} implies that \eqref{eq:relations_diag} must hold, hence $\psi\in\Diag(\Gamma)$. We have proved that $\Stab(\Gamma,\vphi)\subset\Diag(\Gamma)$. The opposite inclusion is obvious.

2) We can extract more information about an element $\psi\in\Aut^*(\Gamma,\vphi)$ than given by its image under the homomorphism \eqref{eq:homom_f1} if we look at the action of $\psi$ on $\vphi$. Write $\psi\vphi\psi^{-1}=\xi_\psi\vphi$ where $\xi_\psi$ is a uniquely determined element of $\Diag(\Gamma)$. Clearly, we have $\xi_{\psi\psi'}=\xi_{\psi}(\psi\xi_{\psi'}\psi^{-1})$. Since $\xi_\psi$ is the conjugation by $\diag(\nu_1,\ldots,\nu_k)\ot X_{u_\psi}$, for a uniquely determined $u_\psi\in T$, we obtain $u_{\psi\psi'}=u_\psi\alpha_\psi(u_{\psi'})$ where $\alpha_\psi$ is the element of $\Aut(T,\beta)$ corresponding to $\psi$ under \eqref{eq:homom_f1}. Hence, we can construct a homomorphism
\begin{equation}\label{eq:homom_f2}
\Aut^*(\Gamma,\vphi)\to\sg(q)\times W(s)\times (T\rtimes\Aut(T,\beta)),
\end{equation}
where the first two components are as in \eqref{eq:homom_f1} and the third is $\psi\mapsto(u_\psi,\alpha_\psi)$.

Theorem \ref{th:weak_equiv_fine_phi_grad} implies that we may assume without loss of generality that
\[
\Phi=\diag\left({X_{t_1},\ldots,X_{t_q}},\begin{bmatrix}0&I\\ I&0\end{bmatrix},\ldots,\begin{bmatrix}0&I\\I&0\end{bmatrix}\right).
\]
(In other words, the scalars $\mu_i$ are all equal to $1$.) Then, for $\psi$ given by $\Psi=PD$ and $\psi_0\in\Aut(\Gamma_0)$, with $P$ corresponding to $\pi\in\sg(q)\times W(s)$, condition \eqref{eq:P1_P2_repeat} is equivalent to the following, with $u=u_\psi$:
\begin{equation}\label{eq:Phi1}
\vphi_0(d_i)X_{t_{\pi(i)}}\nu_{\pi(i)}^{-1}X_u^{-1}d_i=d_0\psi_0(X_{t_i}),\quad i=1,\ldots,q,
\end{equation}
and, for each $j=1,\ldots,s$, one of the following depending on whether $\pi(q+2j-1)<\pi(q+2j)$ or $\pi(q+2j-1)>\pi(q+2j)$:
\begin{equation}\label{eq:Phi2}
\vphi_0(d_{q+2j-1})\nu_{\pi(q+2j)}^{-1}X_u^{-1}d_{q+2j}=\vphi_0(d_{q+2j})\nu_{\pi(q+2j-1)}^{-1}X_u^{-1}d_{q+2j-1}=d_0
\end{equation}
in the first case, and
\begin{equation}\label{eq:Phi3}
\vphi_0(d_{q+2j-1})\nu_{\pi(q+2j-1)}^{-1}X_u^{-1}d_{q+2j}=\vphi_0(d_{q+2j})\nu_{\pi(q+2j)}^{-1}X_u^{-1}d_{q+2j-1}=d_0
\end{equation}
in the second case.

If $\psi\in\Aut^*(\Gamma,\vphi)$, then, looking at the degrees in \eqref{eq:Phi1}, we obtain
\begin{equation}\label{eq:Phi1_deg}
t_{\pi(i)}=\alpha_\psi(t_i)t_{\alpha_\psi} u_\psi,\quad i=1,\ldots,q,
\end{equation}
which implies that $(t_{\alpha_\psi} u_\psi,\alpha_\psi)$ belongs to $\Aut^*\Sigma$. Composing the third component of the homomorphism \eqref{eq:homom_f2} with the automorphism $(u,\alpha)\mapsto(t_\alpha u,\alpha)$ of the group $T\rtimes\Aut(T,\beta)$, we obtain a homomorphism
\begin{equation}\label{eq:homom_f3}
\Aut^*(\Gamma,\vphi)\to\sg(q)\times W(s)\times\Aut^*\Sigma.
\end{equation}
For any element $(t_\alpha u,\alpha)\in\Aut^*\Sigma$, let $\pi_{u,\alpha}\in\sg(q)$ be its restriction to $\Sigma$. Then  \eqref{eq:Phi1_deg} implies that the permutation $\pi\pi_{u_\psi,\alpha_\psi}^{-1}$ does not move the elements of the underlying set of $\Sigma$, so it belongs to $\sg\Sigma$. It follows that \eqref{eq:homom_f3} can be rearranged as follows:
\begin{equation*}
f\colon\Aut^*(\Gamma,\vphi)\to W(s)\times(\sg\Sigma\rtimes\Aut^*\Sigma).
\end{equation*}
We claim that $f$ is surjective. We will construct representatives in $\Aut^*(\Gamma,\vphi)$ for the elements of each of the subgroups $W(s)$, $\sg\Sigma$ and $\Aut^*\Sigma$.

For any $\pi\in W(s)$, let $P$ be the corresponding permutation matrix and let $\psi_\pi$ be given by $\Psi=P$ and $\psi_0=\id$. Let $\alpha$ be the automorphism of $\wt{G}$ that restricts to identity on $T$ and sends $\wt{g}_i$ to $\wt{g}_{\pi(i)}$ (in particular, $\wt{g}_i$ are fixed for $i=1,\ldots,q$). Then $\psi_\pi$ sends ${}^\alpha\Gamma$ to $\Gamma$, so $\psi_\pi\in\Aut(\Gamma)$. Also, conditions \eqref{eq:Phi1} through \eqref{eq:Phi3} are satisfied with $d_0=I$, $u=e$ and $\nu_i=1$, so $\psi_\pi\in\Aut(\Gamma,\vphi)$.

For any $\pi\in\sg\Sigma$, let $P$ be the corresponding permutation matrix and let $\psi_\pi$ be given by $\Psi=P$ and $\psi_0=\id$. Since we have $t_{\pi(i)}=t_i$ for all $i=1,\ldots,q$, we can define the automorphism $\alpha$ of $\wt{G}$ in the same way as above (this time, $\wt{g}_i$ are fixed for $i=q+1,\ldots,q+2s$). Then $\psi_\pi$ sends ${}^\alpha\Gamma$ to $\Gamma$, so $\psi_\pi\in\Aut(\Gamma)$. Also, conditions \eqref{eq:Phi1} and \eqref{eq:Phi2} are satisfied with $d_0=I$, $u=e$ and $\nu_i=1$, so $\psi_\pi\in\Aut(\Gamma,\vphi)$.

Now, for any $(t_\alpha u,\alpha)\in\Aut^*\Sigma$, let $\pi=\pi_{u,\alpha}$. Then $t_{\pi(i)}=\alpha(t_i)t_{\alpha} u$ for $i=1,\ldots,q$ and hence we can extend $\alpha\colon T\to T$ to an automorphism of $\wt{G}$ by setting $\alpha(\wt{g}_i)=\wt{g}_{\pi(i)}$ for $i=1,\ldots,q$, $\alpha(\wt{g}_{q+2j-1})=\wt{g}_{q+2j-1}$ and $\alpha(\wt{g}_{q+2j})=\wt{g}_{q+2j}t_\alpha u$ for $j=1,\ldots,s$. Choose $\nu_i\in\FF^\times$ such that $\nu_i^2=\beta(u,t_i)\beta(u)$, $i=1,\ldots,q$, and set $\nu_{q+2j}=1$ and $\nu_{q+2j-1}=\beta(u)$, $j=1,\ldots,s$. Then \eqref{eq:nu_i} holds, so the conjugation by $\diag(\nu_1 X_u,\ldots,\nu_k X_u)$ is an element $\xi\in\Diag(\Gamma)$. Choose $\psi_0$ such that $\psi_0(X_t)\in\FF X_{\alpha(t)}$. Let $P$ be the permutation matrix corresponding to $\pi$ and let
\[
D=\diag(\lambda_1 I,\ldots,\lambda_q I,I,X_u X_{t_\alpha},\ldots,I,X_u X_{t_\alpha}),
\]
where $\lambda_i\in\FF^\times$ are selected in such a way that condition \eqref{eq:Phi1} holds with $d_0=X_{t_\alpha}$ (the degrees of both sides match, so it is indeed possible to find such $\lambda_i$). Since $\beta(t_\alpha)=1$, condition \eqref{eq:Phi2} also holds. Finally, let $\psi_{u,\alpha}$ be given by $\Psi=PD$ and $\psi_0$. Then $\psi_{u,\alpha}$ sends ${}^\alpha\Gamma$ to $\Gamma$ and $\vphi$ to $\xi\vphi$, with $\alpha$ and $\xi$ indicated above. Therefore, $\psi_{u,\alpha}$ belongs to $\Aut^*(\Gamma,\vphi)$.

We have proved that the homomorphism $f$ is surjective. Let $K$ be the kernel of $f$. It consists of the conjugations by  matrices of the form $D=\diag(d_1,\ldots,d_k)$ such that \eqref{eq:Phi1} and \eqref{eq:Phi2} are satisfied with $\pi=\id$, $\psi_0=\id$, $d_0\in\FF^\times$ and $u=e$. Hence $\deg d_{q+2j-1}=\deg d_{q+2j}$ for all $j=1,\ldots,s$. Conversely, given $(u_1,\ldots,u_k)\in T^k$ with $u_{q+2j-1}=u_{q+2j}$ for $j=1,\ldots,s$, we can find elements $d_i$ with $\deg d_i=u_i$ such that the conjugation by $D$ belongs to $\Aut(\Gamma,\vphi)$.

According to 1), the subgroup
\[
N=\{\psi\in K\;|\;\deg d_1=\cdots=\deg d_k\}
\]
contains $\Stab(\Gamma,\vphi)$. Clearly, $N$ is normal in $\Aut^*(\Gamma,\vphi)$. From the previous paragraph it follows that $K/N\cong T^{q+s}/T$ where $T$ is imbedded into $T^{q+s}$ diagonally. The representatives $\psi_\pi$ that we constructed above for $\pi\in W(s)$ and for $\pi\in\sg\Sigma$ form subgroups of $\Aut(\Gamma,\vphi)$ that commute with one another. But observe also that the representatives $\psi_{u,\alpha}$ for $(t_\alpha u,\alpha)\in\Aut^*\Sigma$ form a subgroup modulo $N$. Moreover, for $\pi\in\sg(s)\subset W(s)$ the elements $\psi_{u,\alpha}$ and $\psi_\pi$ commute modulo $N$, while for $\pi\in\sg\Sigma$ we have $\psi_{u,\alpha}\psi_\pi\psi_{u,\alpha}^{-1}\in\psi_{\pi_{u,\alpha}\pi\pi_{u,\alpha}^{-1}} N$. Finally, for the transposition $\pi=(q+2j-1,q+2j)$, we have $\psi_\pi\psi_{u,\alpha}\psi_\pi\psi_{u,\alpha}^{-1}\in\psi N$ where $\psi$ is the conjugation by $\diag(d_1,\ldots,d_k)$ with $d_{q+2j-1}=d_{q+2j}=X_{t_\alpha u}$ and all other $d_i=I$. It follows that $\Aut^*(\Gamma,\vphi)/N$ is isomorphic to $\big((T^{q+s-1}\times\ZZ_2^s)\rtimes(\sg\Sigma\times\sg(s)\big)\rtimes \Aut^*\Sigma$, with the stated actions.

It remains to compute the quotient $N/\Stab(\Gamma,\vphi)$. Since any element $\psi\in N$ belongs to $\Stab(\Gamma)$, the mapping $\psi\mapsto\xi_\psi$ is a homomorphism $N\to\Diag(\Gamma)$ whose kernel is exactly $\Stab(\Gamma,\vphi)$. Hence, it suffices to compute the image. Since here $u=e$ and $\deg d_{q+2j-1}=\deg d_{q+2j}$, condition \eqref{eq:Phi2} implies that $\nu_{q+2j-1}=\nu_{q+2j}$ for $j=1,\ldots,s$. But then \eqref{eq:nu_i} implies that all $\nu_i^2$ are equal to each other. Since multiplying all $\nu_i$ by the same scalar in $\FF^\times$ does not change $\xi$, we may assume that $\nu_i\in\{\pm 1\}$. In fact, for $D=\diag(\lambda_1 I,\ldots,\lambda_k I)$, conditions \eqref{eq:Phi1} and \eqref{eq:Phi2} reduce to the following: up to a common scalar multiple, $\nu_i=\lambda_i^2$ for $i=1,\ldots,q$, and $\nu_{q+2j-1}=\nu_{q+2j}=\lambda_{q+2j-1}\lambda_{q+2j}$ for $j=1,\ldots,s$. Hence every $(\nu_1,\ldots,\nu_k)$ with $\nu_i\in\{\pm 1\}$ and $\nu_{q+2j-1}=\nu_{q+2j}$ indeed appears in $\xi_\psi$ for some $\psi\in N$. Therefore, the quotient $N/\Stab(\Gamma,\vphi)$ is isomorphic to $\ZZ_2^{q+s}/\ZZ_2$ where $\ZZ_2$ is imbedded into $\ZZ_2^{q+s}$ diagonally.

3) The proof is similar to 2), so we will merely point out the differences. According to Theorem \ref{th:fine_phi_grad_involution}, here we have
\[
\Phi=\diag\left({X_{t_1},\ldots,X_{t_q}},\begin{bmatrix}0&I\\ \delta I&0\end{bmatrix},\ldots,
\begin{bmatrix}0&I\\ \delta I&0\end{bmatrix}\right),
\]
where $\delta=\sgn(\vphi)$ and $\beta(t_i)=\delta$ for $i=1,\ldots,q$. Also, $B'$ equals $B$ and hence, for $\psi$ given by $\Psi=PD$ and $\psi_0\in\Aut(\Gamma_0)$, with $P$ corresponding to $\pi\in\sg(q)\times W(s)$, condition \eqref{eq:P1_P2_repeat} is equivalent to the following:
\begin{equation}\label{eq:Phi1_}
\vphi_0(d_i)X_{t_{\pi(i)}}d_i=d_0\psi_0(X_{t_i}),\quad i=1,\ldots,q,
\end{equation}
and, for each $j=1,\ldots,s$, one of the following depending on whether $\pi(q+2j-1)<\pi(q+2j)$ or $\pi(q+2j-1)>\pi(q+2j)$:
\begin{equation}\label{eq:Phi2_}
\vphi_0(d_{q+2j-1})d_{q+2j}=d_0
\end{equation}
in the first case, and
\begin{equation}\label{eq:Phi3_}
\vphi_0(d_{q+2j-1})d_{q+2j}=\delta d_0
\end{equation}
in the second case. Here we took into account that, since $\vphi_0(d_0)=d_0$, either \eqref{eq:Phi2_} or \eqref{eq:Phi3_} implies $\vphi_0(d_{q+2j-1})d_{q+2j}=\vphi_0(d_{q+2j})d_{q+2j-1}$.

If $\psi\in\Aut(\Gamma,\vphi)$, then, looking at the degrees in \eqref{eq:Phi1_}, we obtain
\begin{equation}\label{eq:Phi1_deg_}
t_{\pi(i)}=\alpha_\psi(t_i)t_{\alpha_\psi},\quad i=1,\ldots,q,
\end{equation}
which implies that $(t_{\alpha_\psi},\alpha_\psi)$ stabilizes $\Sigma$, i.e., $\alpha_\psi$ belongs to $\Aut\Sigma$. Hence we obtain a homomorphism
\begin{equation}\label{eq:homom_f3_}
\Aut(\Gamma,\vphi)\to\sg(q)\times W(s)\times\Aut\Sigma.
\end{equation}
For any element $\alpha\in\Aut\Sigma$, let $\pi_{\alpha}\in\sg(q)$ be the restriction of its twisted action to $\Sigma$. Then  \eqref{eq:Phi1_deg_} implies that the permutation $\pi\pi_{\alpha_\psi}^{-1}$ does not move the elements of the underlying set of $\Sigma$, so it belongs to $\sg\Sigma$. It follows that \eqref{eq:homom_f3_} can be rearranged as follows:
\begin{equation*}
f\colon\Aut(\Gamma,\vphi)\to W(s)\times(\sg\Sigma\rtimes\Aut\Sigma).
\end{equation*}
To prove that $f$ is surjective, we construct representatives in $\Aut(\Gamma,\vphi)$ for the elements of each of the subgroups $W(s)$, $\sg\Sigma$ and $\Aut\Sigma$.

For $\pi$ in $\sg\Sigma$ or in $\sg(s)\subset W(s)$, we take the same representatives as in the proof of 2). For $\pi=(q+2j-1,q+2j)\in W(s)$, a slight modification is needed: we take $\Psi=PD$ rather than just $P$, where $d_{q+2j}=\delta I$ and all other $d_i=I$. For any $\alpha\in\Aut\Sigma$, let $\pi=\pi_{\alpha}$. Then $t_{\pi(i)}=\alpha(t_i)t_{\alpha}$ for $i=1,\ldots,q$ and hence we can extend $\alpha\colon T\to T$ to an automorphism of $\wt{G}$ by setting $\alpha(\wt{g}_i)=\wt{g}_{\pi(i)}$ for $i=1,\ldots,q$, $\alpha(\wt{g}_{q+2j-1})=\wt{g}_{q+2j-1}$ and $\alpha(\wt{g}_{q+2j})=\wt{g}_{q+2j}t_\alpha$ for $j=1,\ldots,s$. Choose $\psi_0$ such that $\psi_0(X_t)\in\FF X_{\alpha(t)}$. Let $P$ be the permutation matrix corresponding to $\pi$ and let
\[
D=\diag(\lambda_1 I,\ldots,\lambda_q I,I,X_{t_\alpha},\ldots,I,X_{t_\alpha}),
\]
where $\lambda_i\in\FF^\times$ are selected in such a way that condition \eqref{eq:Phi1_} holds with $d_0=X_{t_\alpha}$. Clearly, condition \eqref{eq:Phi2_} also holds. Finally, let $\psi_{\alpha}$ be given by $\Psi=PD$ and $\psi_0$. Then $\psi_{\alpha}$ sends ${}^\alpha\Gamma$ to $\Gamma$ and fixes $\vphi$, so $\psi_{\alpha}$ belongs to $\Aut(\Gamma,\vphi)$.

Let $K$ be the kernel of $f$ and let
\[
N=\{\psi\in K\;|\;\deg d_1=\cdots=\deg d_k\}.
\]
The same arguments as in 2) show that $K/N\cong T^{q+s}/T$ and $\Aut(\Gamma,\vphi)/N$ is isomorphic to $\big((T^{q+s-1}\times\ZZ_2^s)\rtimes(\sg\Sigma\times\sg(s)\big)\rtimes \Aut\Sigma$, with the stated actions. But here we have $N=\Stab(\Gamma,\vphi)$, which completes the proof.
\end{proof}


\section{Series $A$}\label{se:A}

In this section we describe the Weyl groups of fine gradings on the simple Lie algebras of series $A$. Thus, we take $\cR=M_n(\FF)$, $n\ge 2$, and $\cL=\Psl_n(\FF)=[\cR,\cR]/(Z(\cR)\cap[\cR,\cR])$. First we review the classification of fine gradings on $\cL$ from \cite{E10} (extended to positive characteristic using automorphism group schemes) and then derive the Weyl groups for $\cL$ from what we already know about automorphisms of fine gradings (\cite{EK_Weyl}) and fine $\vphi$-gradings (Section \ref{se:matrix}) on $\cR$.

\subsection{Classification of fine gradings}

The case $n=2$ is easy, because the restriction from $\cR$ to $\cL$ yields an isomorphism $\AAut(\cR)\to\AAut(\cL)$. It follows that the classification of fine gradings on $\cL$ is the same as that on $\cR$. Namely, there are two fine gradings on $\Sl_2(\FF)$, up to equivalence: the Cartan grading, whose universal group is $\ZZ$, and the Pauli grading, whose universal group is $\ZZ_2^2$.

Now assume $n\ge 3$. Then the restriction and passing modulo the center yields a closed imbedding $\AAut(\cR)\to\AAut(\cL)$, which is not an isomorphism. To rectify this, one introduces the affine group scheme $\overline{\AAut}(\cR)$ corresponding to the algebraic group of automorphisms and anti-automorphisms of $\cR$ (see \cite[\S 3]{BK10}). Unless $n=\chr{\FF}=3$, we obtain an isomorphism $\overline{\AAut}(\cR)\to\AAut(\cL)$. It is convenient to divide gradings on $\cL$ into two types: for Type I the corresponding diagonalizable subgroupscheme of $\AAut(\cL)$ is contained in the image of the closed imbedding $\AAut(\cR)\to\AAut(\cL)$, while for Type II it is not. In other words, a grading on $\cL$ is of Type I if and only if it is induced from a (unique) grading on $\cR$ by restriction and passing modulo the center.

In \cite{BK10}, the {\em distinguished element} of a Type II grading $\Gamma$ is introduced. It can be characterized as the unique element $h$ of order $2$ in the grading group $G$ such that the coarsening $\wb{\Gamma}$ induced from $\Gamma$ by the quotient map $G\to\bG\bydef G/\langle h\rangle$ is a Type I grading. The original grading $\Gamma$ can be recovered from $\wb{\Gamma}$ if we know the action of some character $\chi$ of $G$ with $\chi(h)=-1$. Indeed, we just have to split each component of $\wb{\Gamma}$ into eigenspaces with respect to the action of $\chi$. We can transfer this procedure to $\cR$ in the following way. The action of $\chi$ on $\cL$ is induced by $-\vphi$ where $\vphi$ is an anti-automorphism of $\cR$. The Type I grading $\wb{\Gamma}$ on $\cL$ comes from a grading $\wb{\Gamma}'$ on $\cR$. Since $-\vphi$ is an automorphism of $\brac{\cR}$ (the Lie algebra $\cR$ under commutator) and $\vphi^2$ acts as a scalar on each component of $\wb{\Gamma}'$, we can refine the $\bG$-grading $\wb{\Gamma}':\;\cR=\bigoplus_{\bg\in\bG}\cR_\bg$ to a $G$-grading $\Gamma':\;\brac{\cR}=\bigoplus_{g\in G}\cR_g$ by splitting each component $\cR_\bg$ into eigenspaces of $\vphi$. In detail, $\vphi^2$ acts on $\cR_\bg$ as multiplication by $\chi^2(\bg)$ (where we regard $\chi^2$ as a character of $\bG$, since $\chi^2(h)=1$), so we set
\begin{equation}\label{eq:refine_bar_G_repeat}
\cR_g=\{X\in \cR_\bg\;|\;\vphi(X)=-\chi(g)X\}=\{\vphi(X)-\chi(g)X\;|\;X\in \cR_\bg\}.
\end{equation}
Then $\Gamma'$ induces the original Type II grading $\Gamma$ on $\cL$ by restriction and passing modulo the center.

Now we apply the above to fine gradings on $\cL$. The fine gradings of Type I come from the fine gradings on $\cR$ that do not admit an anti-automorphism $\vphi$ making them $\vphi$-gradings. All fine gradings on $\cR$ are obtained as follows. We start from $T$, a finite  abelian group that admits a nondegenerate alternating bicharacter $\beta$ (hence $|T|$ is a square). Fix a realization, $\cD$, of the matrix algebra endowed with a division grading with support $T$ and bicharacter $\beta$. Let $k\ge 1$ be an integer. Denote by $\wt{G}=\wt{G}(T,k)$ the abelian group freely generated by $T$ and the symbols $\wt{g}_1,\ldots,\wt{g}_k$.

\begin{df}\label{construct_fine}
Let $\cM(\cD,k)$ be the $\wt{G}$-graded algebra $\End_\cD(V)$ where $V$ has a $\cD$-basis $\{v_1,\ldots,v_k\}$ with $\deg v_i=\wt{g}_i$. Let $n=k\sqrt{|T|}$ and $\cR=M_n(\FF)$. The grading on $\cR$ obtained by identifying $\cR$ with $\cM(\cD,k)$ will be denoted by $\M(\cD,k)$. In other words, we define this grading by identifying $\cR=M_k(\cD)$ and setting $\deg(E_{ij}\ot X_t)\bydef\wt{g}_i t\wt{g}_j^{-1}$. By abuse of notation, we will also write $\M(T,k)$.
\end{df}

The universal group of $\M(T,k)$ is the subgroup $\wt{G}^0=\wt{G}(T,k)^0$ of $\wt{G}$ generated by the support, i.e., by the elements $z_{i,j,t}\bydef \wt{g}_i t\wt{g}_j^{-1}$, $t\in T$. Clearly, $\wt{G}^0\cong T\times\ZZ^{k-1}$. By \cite[Proposition 3.24]{E10}, $\M(T,k)$ is a $\vphi$-grading for some $\vphi$ if and only if $T$ is an elementary $2$-group and $k\le 2$. Two gradings, $\M(T,k)$ and $\M(T',k')$, are equivalent if and only if $T\cong T'$ and $k=k'$.

\begin{df}\label{type_I_fine}
Consider the grading $\M(T,k)$ on $\cR$ by the group $\wt{G}(T,k)^0$ where $k\ge 3$ if $T$ is an elementary $2$-group. The $\wt{G}(T,k)^0$-grading on $\cL$ obtained by restriction and passing modulo the center will be denoted by $\AI(T,k)$.
\end{df}

The grading $\AI(T,k)$ is fine, and $\wt{G}(T,k)^0$ is its universal group. To deal with fine gradings of Type II, we will need the following general observation:

\begin{lemma}\label{lm:extend_group}
Let $\wb{\Gamma}$ be a $\vphi$-grading on an algebra $\cA$ and let $\bG$ be its universal group. Then there exist an abelian group $G$, an element $h\in G$ of order $2$, a character $\chi$ of $G$ with $\chi(h)=-1$ such that $\bG=G/\langle h\rangle$ and the action of $\chi^2$ on the $\bG$-graded algebra $\cA$ (regarding $\chi^2$ as a character of the group $\bG$) coincides with $\vphi^2$. The pair $(G,h)$ is determined uniquely up to isomorphism over $\bG$ (i.e., $\langle h\rangle\to G\to\bG$ is unique up to equivalence of extensions).
\end{lemma}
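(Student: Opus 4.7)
The plan is to encode the scalar action of $\vphi^{2}$ as a character of $\bG$ and then realise the datum $(G,h,\chi)$ as a pullback of abelian groups. Since $\wb{\Gamma}$ is a $\vphi$-grading, $\vphi^{2}\in\Diag(\wb{\Gamma})$, and because $\bG$ is the universal group of $\wb{\Gamma}$, the group $\Diag(\wb{\Gamma})$ is identified with $\wh{\bG}$. Thus $\vphi^{2}$ corresponds to a unique character $\lambda\colon\bG\to\FF^{\times}$, with $\vphi^{2}$ acting on $\cA_{\bg}$ as multiplication by $\lambda(\bg)$.

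For existence I would take $G$ to be the fibre product
\[G\bydef\bG\times_{\FF^{\times}}\FF^{\times}=\{(\bg,c)\in\bG\times\FF^{\times}\mid c^{2}=\lambda(\bg)\},\]
formed along $\lambda$ and the squaring map $\FF^{\times}\to\FF^{\times}$. Since $\chr\FF\ne 2$ and $\FF$ is algebraically closed, squaring on $\FF^{\times}$ is surjective with kernel $\{\pm 1\}$, so the projection $G\to\bG$ is surjective with kernel $\{(e,\pm 1)\}\cong\ZZ_{2}$. Setting $h=(e,-1)$ gives an element of order $2$ with $\bG=G/\langle h\rangle$; letting $\chi\colon G\to\FF^{\times}$ be the second projection yields $\chi(h)=-1$ and $\chi^{2}(\bg,c)=c^{2}=\lambda(\bg)$, so $\chi^{2}$ descends to $\lambda$ on $\bG$ and therefore matches the action of $\vphi^{2}$.

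For uniqueness, given another triple $(G',h',\chi')$ satisfying the same properties, I would consider the homomorphism $\iota\colon G'\to\bG\times\FF^{\times}$, $g'\mapsto(\pi'(g'),\chi'(g'))$, where $\pi'\colon G'\to\bG$ is the quotient. The equation $\chi'^{2}=\lambda\circ\pi'$ forces $\im\iota\subset G$, and $\chi'(h')=-1$ makes $\iota$ injective (its kernel lies in $\langle h'\rangle$ and contains no nontrivial element). Since both $G'$ and $G$ are $2$-fold covers of $\bG$, comparing fibre cardinalities shows that $\iota$ is an isomorphism onto $G$ sending $h'$ to $h$ and inducing $\id_{\bG}$, which is precisely the required equivalence of the extensions $\langle h\rangle\to G\to\bG$. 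The only mild point of care is that this pullback $G$ need not be finitely generated even when $\bG$ is, but the statement of the lemma imposes no such requirement, so no real obstacle arises.
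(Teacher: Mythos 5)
Your proof is correct, and it takes a genuinely different route from the paper's. The paper fixes, for each $\bg\in\bG$, a square root $\mu(\bg)$ of $\lambda(\bg)$ with $\mu(\be)=1$, checks that $\veps(\wb{x},\wb{y})=\mu(\wb{x}\,\wb{y})\mu(\wb{x})^{-1}\mu(\wb{y})^{-1}$ is a symmetric $2$-cocycle with values in $\{\pm 1\}\cong\ZZ_2$ whose class in $H^2(\bG,\ZZ_2)$ is independent of the choices, and takes $G$ to be the resulting central extension, with $h=(\be,-1)$ and $\chi(\bg,\delta)=\mu(\bg)\delta$; uniqueness is then left implicit in the well-definedness of the cohomology class. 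You instead build $G$ directly as the fibre product of $\lambda\colon\bG\to\FF^{\times}$ with the squaring map, which eliminates the arbitrary square-root choices; indeed a choice of $\mu$ is exactly a set-theoretic section $\bg\mapsto(\bg,\mu(\bg))$ of your $G\to\bG$, and the associated factor set is $\veps$, so the two constructions yield the same extension. Your uniqueness argument is the more explicit of the two: embedding any competing $(G',h',\chi')$ into the pullback via $g'\mapsto(\pi'(g'),\chi'(g'))$ and checking surjectivity fibre by fibre (each fibre has two elements on both sides) cleanly exhibits the required equivalence of extensions, a step the paper does not spell out. One small correction: your worry that the pullback might fail to be finitely generated when $\bG$ is finitely generated is unfounded, since an abelian extension of a finitely generated group by a finite group is always finitely generated; but, as you observe, the lemma does not require this anyway.
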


\begin{proof}
For each $\bg\in\bG$, $\vphi^2$ acts on $\cA_{\bg}$ as multiplication by some $\lambda(\bg)\in\FF^\times$. Since $\bG$ is the universal group of $\wb{\Gamma}$, $\lambda\colon\bG\to\FF^\times$ is a homomorphism. For each $\bg\in\bG$, we select $\mu(\bg)\in\FF^\times$ such that $\mu(\bg)^2=\lambda(\bg)$ (there are two choices). It will be convenient to choose $\mu(\be)=1$. It follows that
\begin{equation}\label{eq:extension_factor}
\mu(\wb{x}\,\wb{y})=\veps(\wb{x},\wb{y})\mu(\wb{x})\mu(\wb{y})\quad\mbox{for all}\quad\wb{x},\wb{y}\in\bG
\end{equation}
where $\veps(\wb{x},\wb{y})\in\{\pm 1\}$. One immediately verifies that $\veps$ is a symmetric $2$-cocycle on $\bG$ with $\veps(\bg,\be)=1$ for all $\bg\in\bG$ and, moreover, the class of $\veps$ in $H^2(\bG,\ZZ_2)$ (where we identified $\{\pm 1\}$ with $\ZZ_2$) does not depend on the choices of $\mu(\bg)$. Let $G$ be the central extension of $\bG$ by $\ZZ_2$ determined by $\veps$, i.e., $G$ consists of the pairs $(\bg,\delta)$, $\bg\in\bG$, $\delta\in\{\pm 1\}$, with multiplication given by
\begin{equation}\label{eq:extension_multiplication}
(\wb{x},\delta_1)(\wb{y},\delta_2)=(\wb{x}\,\wb{y},\,\veps(\wb{x},\wb{y})\delta_1\delta_2)\quad\mbox{for all}\quad\wb{x},\wb{y}\in\bG\;\mbox{ and }\;\delta_1,\delta_2\in\{\pm 1\}.
\end{equation}
Define $\chi\colon G\to\FF^\times$ by $(\bg,\delta)\mapsto\mu(\bg)\delta$. Comparing \eqref{eq:extension_factor} and \eqref{eq:extension_multiplication}, we see that $\chi$ is a homomorphism. Set $h=(\be,-1)\in G$. Then $h$ has order $2$ and $\chi(h)=-1$.  By construction, the action of $\chi^2$ on $\cA$ determined by $\wb{\Gamma}$ coincides with $\vphi^2$.
\end{proof}

Let $T$ be an elementary $2$-group of even dimension. Recall the group $\wt{G}(T,q,s,\tau)$, which was introduced before Definition \ref{construct_phi_fine}, and its subgroup $\wt{G}(T,q,s,\tau)^0$.

\begin{df}\label{type_II_fine}
Consider the grading $\wb{\Gamma}=\M(T,q,s,\tau)$ on $\cR$ by the group $\bG=\wt{G}(T,q,s,\tau)^0$ where $t_1\ne t_2$ if $q=2$ and $s=0$. Let $\Phi$ be the matrix given by
\begin{equation*}
\Phi=
\diag\left({X_{t_1},\ldots,X_{t_q}},\begin{bmatrix}0&I\\I&0\end{bmatrix},\ldots,\begin{bmatrix}0&I\\I&0\end{bmatrix}\right).
\end{equation*}
Define $\vphi(X)=\Phi^{-1}({}^tX)\Phi$. Let $G$, $h$ and $\chi$ be as in Lemma \ref{lm:extend_group}, so we obtain a $G$-grading on $\brac{\cR}$ defined by \eqref{eq:refine_bar_G_repeat}. The $G$-grading on $\cL$ obtained by restriction and passing modulo the center will be denoted by $\AII(T,q,s,\tau)$.
\end{df}

The grading $\AII(T,q,s,\tau)$ is fine, and $G$ is its universal group. Note that $\vphi^4=\id$. It can be shown (cf. \cite[Example 3.21]{E10}) that the extension $\langle h\rangle\to G\to\bG$ is split if and only if there exists $t\in T$ such that $t_i t$ are in $T_+$ for all $i$ or in $T_-$ for all $i$. Taking into account \eqref{eq:univ_phi_fine}, we see that $G$ is isomorphic to
\[
\left\{\begin{array}{l}
\ZZ_2^{\dim T-2\dim T_0+\max(0,q-1)+1}\times\ZZ_4^{\dim T_0}\times\ZZ^s\quad\mbox{if}\quad\exists t\in T\quad\beta(t_1 t)=\ldots=\beta(t_q t);\\
\ZZ_2^{\dim T-2\dim T_0+\max(0,q-1)}\times\ZZ_4^{\dim T_0+1}\times\ZZ^s\quad\mbox{otherwise},
\end{array}\right.
\]
where $T_0$ is the subgroup of $T$ generated by the elements $t_i t_{i+1}$, $i=1,\ldots,q-1$.

Now Theorem 4.2 of \cite{E10} can be extended to positive characteristic and recast as follows:

\begin{theorem}\label{A_fine}
Let $\FF$ be an algebraically closed field, $\chr{\FF}\neq 2$. Let $n\ge 3$ if $\chr{\FF}\ne 3$ and $n\ge 4$ if $\chr{\FF}=3$. Then any fine grading on $\Psl_n(\FF)$ is equivalent to one of the following:
\begin{itemize}
\item $\AI(T,k)$ as in Definition \ref{type_I_fine} with $k\sqrt{|T|}=n$,
\item $\AII(T,q,s,\tau)$ as in Definition \ref{type_II_fine} with $(q+2s)\sqrt{|T|}=n$.
\end{itemize}
Gradings belonging to different types listed above are not equivalent. Within each type, we have the following:
\begin{itemize}
\item $\AI(T_1,k_1)$ and $\AI(T_2,k_2)$ are equivalent if and only if\\ $T_1\cong T_2$ and $k_1=k_2$;
\item $\AII(T_1,q_1,s_1,\tau_1)$ and $\AII{}(T_2,q_2,s_2,\tau_2)$ are equivalent if and only if\\ $T_1\cong T_2$, $q_1=q_2$, $s_1=s_2$ and, identifying $T_1=T_2=\ZZ_2^{2r}$, $\Sigma(\tau_1)$ is conjugate to $\Sigma(\tau_2)$ by the natural action of $\mathrm{ASp}_{2r}(2)$.\qed
\end{itemize}
\end{theorem}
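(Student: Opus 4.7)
My plan is to transfer the classification from $\cL$ to $\cR$ via automorphism group schemes, using the identification of fine gradings with maximal diagonalizable subgroupschemes (up to conjugation) described in Section~\ref{se:generalities}. Since $n\ge 3$ (with the exclusion $n=\chr{\FF}=3$), the paper's comment that $\overline{\AAut}(\cR)\to\AAut(\cL)$ is an isomorphism applies, so a fine grading on $\cL$ corresponds to a maximal diagonalizable closed subgroupscheme $Q\subset\overline{\AAut}(\cR)$. The basic dichotomy is whether $Q\subset\AAut(\cR)$ (Type I) or not (Type II), and this is precisely the Type I/II distinction recalled before Definition \ref{type_II_fine}.

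For Type I, I would argue that $Q$ being maximal in $\overline{\AAut}(\cR)$ forces $Q$ to be maximal in $\AAut(\cR)$ \emph{and} to admit no anti-automorphism normalizing it (otherwise one could enlarge $Q$ inside $\overline{\AAut}(\cR)$). Translated into grading language, this says: Type I fine gradings on $\cL$ correspond bijectively to fine gradings on $\cR$ that do not admit an anti-automorphism $\vphi$ turning them into a $\vphi$-grading. The classification of fine gradings on $\cR$ from \cite{EK_Weyl} is $\M(T,k)$, and by \cite[Proposition 3.24]{E10} cited in Section~\ref{se:matrix}, $\M(T,k)$ is a $\vphi$-grading precisely when $T$ is an elementary $2$-group with $k\le 2$. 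Excluding these gives exactly the range in Definition \ref{type_I_fine}, yielding the list $\AI(T,k)$ with $k\ge 3$ when $T$ is an elementary $2$-group. The equivalence criterion for Type I is inherited directly from the equivalence criterion for $\M(T,k)$ on $\cR$.

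For Type II, $Q\not\subset\AAut(\cR)$, so $Q':= Q\cap\AAut(\cR)$ has index $2$ in $Q$, and any element of $Q\setminus Q'$ provides an anti-automorphism $\vphi$ normalizing $Q'$. The coarsening $\wb{\Gamma}$ of $\Gamma$ associated to $Q'$ is a Type I grading on $\cL$ coming from a grading $\wb{\Gamma}'$ on $\cR$, and the maximality of $Q$ forces $\wb{\Gamma}'$ to be a \emph{fine} $\vphi$-grading on $\cR$. Theorem \ref{th:fine_phi_grad} then identifies $(\wb{\Gamma}',\vphi)$ with some $(\M(T,q,s,\tau),\vphi_{\tau,\mu})$. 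Applying Lemma \ref{lm:extend_group} to $(\wb{\Gamma}',\vphi)$ yields the universal group $G$, the distinguished element $h\in G$ of order $2$, and the character $\chi$ used in the refinement \eqref{eq:refine_bar_G_repeat}; this reconstructs $\Gamma$ as $\AII(T,q,s,\tau)$ from Definition \ref{type_II_fine}, proving existence.

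The step I expect to be the main obstacle is the equivalence criterion for Type II gradings, because the map $(\wb{\Gamma}',\vphi)\mapsto \AII(T,q,s,\tau)$ is not injective on equivalence classes of pairs, only on \emph{weak} equivalence classes. Concretely, any character $\chi$ of $G$ with $\chi(h)=-1$ is determined only up to multiplication by a character of $\bG=G/\langle h\rangle$; replacing $\chi$ by $\chi\chi'$ replaces the associated anti-automorphism $\vphi$ by $\xi\vphi$, where $\xi\in\Diag(\wb{\Gamma}')$ corresponds to $\chi'$. Thus two realizations of the same Type II fine grading $\Gamma$ on $\cL$ yield pairs on $\cR$ that are at best weakly equivalent, and conversely weak equivalence on $\cR$ produces an equivalence on $\cL$ because the two corresponding choices of $\chi$ recover the same decomposition of $\cL$. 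Combining this with Theorem \ref{th:weak_equiv_fine_phi_grad}, which translates weak equivalence into the action of $\mathrm{ASp}_{2r}(2)\cong T\rtimes\Aut(T,\beta)$ on $\Sigma(\tau)$, produces the stated criterion. Finally, Type I and Type II never coincide because a Type II fine grading has a distinguished element $h$ whose associated coarsening is strictly coarser, whereas a Type I fine grading has no such refinement to work with.
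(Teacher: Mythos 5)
Your proposal follows the same transfer strategy that the paper relies on (and that is carried out in detail in Elduque's paper \cite{E10}, which the paper cites for this theorem and then extends to $\chr\FF\neq 2$ via group schemes as in \cite{BK10}): identify fine gradings on $\cL$ with maximal quasitori in $\overline{\AAut}(\cR)\cong\AAut(\cL)$, split into Type~I and Type~II according to containment in $\AAut(\cR)$, match Type~I gradings with fine gradings on $\cR$ that are not $\vphi$-gradings, and match Type~II gradings with fine $\vphi$-gradings on $\cR$ up to \emph{weak} equivalence. You have correctly isolated the essential subtlety: the pair $(\wb{\Gamma}',\vphi)$ attached to a Type~II grading is canonical only up to replacing $\vphi$ by $\xi\vphi$ for $\xi\in\Diag(\wb{\Gamma}')$ (equivalently, the character $\chi$ of $G$ with $\chi(h)=-1$ is unique only modulo characters of $\bG$), so the invariant that classifies Type~II fine gradings is the weak equivalence class of $(\wb{\Gamma}',\vphi)$, which Theorem~\ref{th:weak_equiv_fine_phi_grad} then converts into $\mathrm{ASp}_{2r}(2)$-orbits of $\Sigma(\tau)$. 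So the substance of the argument agrees with what the paper is invoking.

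One point to tighten: your justification for why a Type~I and a Type~II fine grading can never be equivalent is circular as stated. Asserting that a Type~II grading ``has a distinguished element $h$'' while a Type~I grading ``has no such refinement'' presupposes exactly what needs to be shown, since the existence of $h$ is essentially the definition of Type~II and, a priori, a Type~I grading might also admit an order-$2$ element of its universal group with a Type~I coarsening. The clean argument is at the group-scheme level: $\AAut(\cR)$ is a normal (index-$2$) subgroupscheme of $\overline{\AAut}(\cR)\cong\AAut(\cL)$, so the condition ``$Q\subset\AAut(\cR)$'' on a quasitorus is stable under conjugation by $\Aut(\cL)$; hence a maximal quasitorus contained in $\AAut(\cR)$ can never be conjugate to one that is not, and Type~I and Type~II fine gradings are never equivalent. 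Similarly, your phrase that $Q$ should ``admit no anti-automorphism normalizing it'' is slightly off: what is needed is that $\Gamma'$ admits no anti-automorphism $\vphi$ that fixes each homogeneous component with $\vphi^2\in\Diag(\Gamma')$ (i.e.\ no $\vphi$-grading structure), which is a centralizing condition rather than a normalizing one --- you do state it correctly in the very next sentence, so this is only a matter of wording.
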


The missing case $n=\chr{\FF}=3$ can be treated using octonions, because in characteristic $3$ the algebra of traceless octonions under commutator is a Lie algebra isomorphic to $\Psl_3(\FF)$ (cf. \cite[Remark 4.11]{BK10}).

\subsection{Weyl groups of fine gradings}

By \cite[Theorem 2.8]{EK_Weyl}, the Weyl group of $\M(T,k)$ is isomorphic to $T^{k-1}\rtimes(\sg(k)\times\Aut(T,\beta))$, with $\sg(k)$ and $\Aut(T,\beta)$ acting on $T^{k-1}$ through their natural action on $T^k$ and identification of $T^{k-1}$ with $T^k/T$ where $T$ is imbedded into $T^k$ diagonally. Thanks to the isomorphism $\AAut(M_2(\FF))\to\AAut(\Sl_2(\FF))$, it follows that the Weyl group of the Cartan grading on $\Sl_2(\FF)$ is $\sg(2)$ (the classical Weyl group of type $A_1$) and the Weyl group of the Pauli grading on $\Sl_2(\FF)$ is $\SP_2(2)=\GL_2(2)$ (this is known in the case $\chr{\FF}=0$ --- see \cite{HPPT}).

To state our result for $\Psl_n(\FF)$, $n\ge 3$, it is convenient to introduce the following notation:
\[
\overline{\Aut}(T,\beta)\bydef\Aut(T,\beta)\rtimes\langle\sigma\rangle,
\]
where $\sigma$ is an element of order $2$ acting as the automorphism of $T$ that sends $a_i$ to $a_i^{-1}$ and $b_i$ to $b_i$, where $a_i$ and $b_i$ are the generators of $T$ used for the chosen realization of $\cD$ (a ``symplectic basis'' of $T$ with respect to $\beta$). We observe that $\beta(\sigma\cdot u,\sigma\cdot v)=\beta(u,v)^{-1}$, for all $u,v\in T$, and hence we obtain an induced action of $\sigma$ on $\Aut(T,\beta)$ by setting $(\sigma\cdot\alpha)(t)\bydef\sigma\cdot\alpha(\sigma\cdot t)$ for all $\alpha\in\Aut(T,\beta)$ and $t\in T$. The elements of $\overline{\Aut}(T,\beta)$ act as automorphisms of $T$ that send $\beta$ to $\beta^{\pm 1}$. However, this action is not faithful if $T$ is an elementary $2$-group.

\begin{theorem}\label{th:Weyl_AI}
Let $\FF$ be an algebraically closed field, $\chr{\FF}\neq 2$. Let $n\ge 3$ if $\chr{\FF}\ne 3$ and $n\ge 4$ if $\chr{\FF}=3$. Consider the fine grading $\Gamma=\AI(T,k)$ on $\Psl_n(\FF)$ as in Definition \ref{type_I_fine}, $k\sqrt{|T|}=n$. Then
\[
\W(\Gamma)\cong T^{k-1}\rtimes(\sg(k)\times\overline{\Aut}(T,\beta)),
\]
with $\sg(k)$ and $\overline{\Aut}(T,\beta)$ acting on $T^{k-1}$ through their natural action on $T^k$ and identification of $T^{k-1}$ with $T^k/T$ where $T$ is imbedded into $T^k$ diagonally.
\end{theorem}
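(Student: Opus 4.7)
The plan is to apply the transfer technique of \cite{BK10} together with the known Weyl group of the matrix algebra grading from \cite[Theorem 2.8]{EK_Weyl}. Under the stated hypotheses on $n$ and $\chr{\FF}$, there is an isomorphism of affine group schemes $\overline{\AAut}(\cR) \to \AAut(\cL)$, and, since $\Gamma$ is of Type I obtained from $\Gamma' = \M(T,k)$ by restriction and passage modulo the center, the quasitorus $\Diags(\Gamma)$ corresponds to $\Diags(\Gamma')$. Hence $\W(\Gamma)$ is the quotient of the normalizer by the centralizer of $\Diags(\Gamma')$ inside $\overline{\AAut}(\cR)$. Intersecting the normalizer with the index-$2$ closed subscheme $\AAut(\cR)$ recovers $\W(\Gamma') \cong T^{k-1} \rtimes (\sg(k) \times \Aut(T,\beta))$. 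It thus remains to exhibit an anti-automorphism of $\cR$ normalizing $\Diags(\Gamma')$ and to compute the induced extension of $\W(\Gamma')$.

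The matrix transpose $\vphi$ with respect to the basis used in the Pauli realization of $\cD$ provides such an anti-automorphism. Indeed, ${}^tX_{a_i} = X_{a_i}$ and ${}^tX_{b_i} = X_{b_i^{-1}}$, so $\vphi$ sends each $X_t$ to a scalar multiple of $X_{\sigma_0(t)}$, where $\sigma_0$ is the involution of $T$ fixing every $a_i$ and inverting every $b_i$. This $\sigma_0$ satisfies $\beta(\sigma_0(u),\sigma_0(v)) = \beta(u,v)^{-1}$ and differs from the involution $\sigma$ of the statement precisely by the inversion $\iota$ of $T$, which belongs to $\Aut(T,\beta)$. Consequently $\vphi$ permutes the homogeneous components of $\Gamma'$, sending the component of degree $\wt{g}_i t \wt{g}_j^{-1}$ to that of degree $\wt{g}_j \sigma_0(t) \wt{g}_i^{-1}$, and the induced automorphism $\tau$ of the universal group $\wt{G}^0 \cong T \times \ZZ^{k-1}$ acts as $\sigma_0$ on $T$ and as inversion on the free factor generated by the classes of the $\wt{g}_i \wt{g}_{i+1}^{-1}$. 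Since $\tau^2 = \id$ (because $\vphi^2=\id$) while elements of $\W(\Gamma')$ act on $\ZZ^{k-1}$ only through $\sg(k)$-permutations, $\tau$ represents a nontrivial coset of order $2$ in $\W(\Gamma) / \W(\Gamma')$, giving $\W(\Gamma) = \W(\Gamma') \rtimes \langle \tau \rangle$.

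It remains to identify this semidirect product with $T^{k-1} \rtimes (\sg(k) \times \overline{\Aut}(T,\beta))$, which reduces to conjugation calculations. Using $\tau^{-1} = \tau$ and the fact that $\iota$ is central in $\Aut(T)$, one obtains $\tau\pi\tau = \pi$ for $\pi \in \sg(k)$, $\tau\alpha\tau = \sigma_0\alpha\sigma_0 = \sigma\alpha\sigma = \sigma\cdot\alpha$ for $\alpha \in \Aut(T,\beta)$, and that $\tau$ conjugates the translation by a class $(u_1,\ldots,u_k)T \in T^k/T \cong T^{k-1}$ to the translation by $(\sigma_0(u_1)^{-1},\ldots,\sigma_0(u_k)^{-1})T = (\sigma(u_1),\ldots,\sigma(u_k))T$. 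Mapping $\tau$ to the canonical generator $\sigma$ of $\overline{\Aut}(T,\beta)/\Aut(T,\beta)$ therefore extends the isomorphism of \cite[Theorem 2.8]{EK_Weyl} to the one claimed. The main technical point is reconciling $\sigma_0$, arising naturally from transpose, with the $\sigma$ of the statement; they differ by $\iota \in \Aut(T,\beta)$, and this discrepancy is absorbed precisely because $\iota$ commutes with every automorphism of $T$.
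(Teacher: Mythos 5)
Your proof follows the same strategy as the paper's: use $\overline{\AAut}(\cR)\cong\AAut(\cL)$ to reduce to $\W(\Gamma')\cong T^{k-1}\rtimes(\sg(k)\times\Aut(T,\beta))$, identify the extra order-two contribution coming from the (negative) transpose, compute its action on the universal group $\wt{G}^0\cong T\times\ZZ^{k-1}$ and its conjugation action on $\W(\Gamma')$, and conclude $\W(\Gamma)=\W(\Gamma')\rtimes\langle\tau\rangle$. The bookkeeping with $\sigma_0$ (fix $a_i$, invert $b_i$) versus the paper's $\sigma$ (invert $a_i$, fix $b_i$) is handled correctly: they differ by the central inversion $\iota\in\Aut(T,\beta)$, and your conjugation computations land in the right place.

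There is one gap, in the step where you argue that $\tau$ represents a nontrivial coset modulo $\W(\Gamma')$. Your justification is that $\tau$ acts by inversion on the free part $\ZZ^{k-1}$ of $\wt{G}^0$ while $\W(\Gamma')$ acts there only through $\sg(k)$. That is a valid separation only for $k\ge 3$, where $-\id$ does not lie in the image of $\sg(k)$ in $\GL_{k-1}(\ZZ)$ (the Weyl group of $A_{k-1}$ does not contain $-\id$ for $k\ge 3$). But $\AI(T,k)$ allows $k=1$ or $k=2$ whenever $T$ is \emph{not} an elementary $2$-group (e.g.\ $T=\ZZ_3^2$, $k=1$, $n=3$). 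For $k=2$, $\sg(2)$ does act by $\pm 1$ on $\ZZ$, and for $k=1$ the free part is trivial, so the $\ZZ^{k-1}$-argument says nothing. In those cases the conclusion still holds, but for a different reason: any candidate in $\W(\Gamma')$ matching $\tau$'s action would have to restrict on $T$ to $\sigma_0$, and $\sigma_0\notin\Aut(T,\beta)$ unless $T$ is an elementary $2$-group (since $\beta(\sigma_0(u),\sigma_0(v))=\beta(u,v)^{-1}$). Since $k\le 2$ forces $T$ not elementary $2$-abelian, this rules out such a candidate. You should add this argument (which you essentially have all the ingredients for, since you already observed $\sigma_0\beta=\beta^{-1}$) so that the step covers the full range of $(T,k)$ allowed by Definition \ref{type_I_fine}.
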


\begin{proof}
The grading $\Gamma$ on $\cL=\Psl_n(\FF)$ is induced by the grading $\Gamma'=\M(T,k)$ on $\cR=M_n(\FF)$. The universal group of both gradings is $G=\wt{G}(T,k)^0$. Since restriction is a bijection between gradings on $\cR$ and Type I gradings on $\cL$, an automorphism $\psi'$ of $\cR$ sends ${}^\alpha\Gamma'$ to $\Gamma'$, for some automorphism $\alpha$ of $G$, if and only if the induced automorphism $\psi$ of $\cL$ sends ${}^\alpha\Gamma$ to $\Gamma$. The automorphism group of $\cL$ is the semidirect product of $\Aut(\cR)$, in its induced action on $\cL$, and $\langle\sigma\rangle$, where $\sigma$ is given by the negative of matrix transpose. To compute the action of $\sigma$, recall that $(u_1,\ldots,u_k)T\in T^k/T$ can be represented by the automorphism $X\mapsto DXD^{-1}$ where $D=\diag(X_{u_1},\ldots,X_{u_k})$, $\pi\in\sg(k)$ can be represented by $X\mapsto PXP^{-1}$ where $P$ is the permutation matrix corresponding to $\pi$, and $\alpha\in\Aut(T,\beta)$ can be represented by $X\mapsto\psi_0(X)$ where $\psi_0$ is an automorphism of $\cD$ such that $\psi_0(X_t)\in\FF X_{\alpha(t)}$ for all $t\in T$. The conjugation by $\sigma$ sends the automorphism $X\mapsto\Psi X\Psi^{-1}$ to the automorphism $X\mapsto({}^t\Psi^{-1})X({}^t\Psi)$, i.e., replaces $\Psi$ by ${}^t\Psi^{-1}$. Hence, $\sigma$ commutes with $\sg(k)$, while the conjugation by $\sigma$ sends $(u_1,\ldots,u_k)T$ to $(\sigma\cdot u_1,\ldots,\sigma\cdot u_k)T$, where the action of $\sigma$ on $T$ is as indicated above. Also, the action of $\sigma$ on $G$ sends $z_{i,j,t}\bydef \wt{g}_i t\wt{g}_j^{-1}$ to $z_{i,j,\sigma\cdot t}^{-1}$, so $\sigma$ belongs to $\Aut(\Gamma)$, but not to $\Stab(\Gamma)$. Hence we obtain $\Aut(\Gamma)=\Aut(\Gamma')\rtimes\langle\sigma\rangle$ and $\Stab(\Gamma)=\Stab(\Gamma')$. The result follows.
\end{proof}


\begin{theorem}\label{th:Weyl_AII}
Let $\FF$ be an algebraically closed field, $\chr{\FF}\neq 2$. Let $n\ge 3$ if $\chr{\FF}\ne 3$ and $n\ge 4$ if $\chr{\FF}=3$. Consider the fine grading $\Gamma=\AII(T,q,s,\tau)$ on $\Psl_n(\FF)$ as in Definition \ref{type_II_fine}, $(q+2s)\sqrt{|T|}=n$. Let $\Sigma=\Sigma(\tau)$. Then $\W(\Gamma)$ contains a normal subgroup $N$ isomorphic to $\ZZ_2^{q+s-1}$ such that
\[
\W(\Gamma)/N\cong \big((T^{q+s-1}\times\ZZ_2^s)\rtimes(\sg\Sigma\times\sg(s)\big)\rtimes \Aut^*\Sigma,
\]
where the actions are described naturally if we identify $T^{q+s-1}$ with $T^{q+s}/T$ and $\ZZ_2^{q+s-1}$ with $\ZZ_2^{q+s}/\ZZ_2$ (diagonal imbeddings).
Moreover, $\W(\Gamma)$ contains a subgroup isomorphic to $\big((T^{q+s-1}\times\ZZ_2^s)\rtimes(\sg\Sigma\times\sg(s)\big)\rtimes \Aut\Sigma$ that is disjoint from $N$.
\end{theorem}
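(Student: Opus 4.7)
My plan is to identify $\W(\Gamma)$ with the quotient $\Aut^*(\wb\Gamma,\vphi)/\Stab(\wb\Gamma,\vphi)$ studied in Theorem~\ref{groups_fine_phi_grad_matrix}, where $\wb\Gamma=\M(T,q,s,\tau)$ and $\vphi$ are as in Definition~\ref{type_II_fine}, and then read off both assertions directly from parts (2) and (3) of that theorem. I expect the main obstacle to be this identification itself, which requires carefully lifting automorphisms of $\cL$ to the matrix algebra $\cR$ and absorbing the anti-automorphism contribution.

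To set up the identification, I would use the isomorphism $\AAut(\cL)\cong\overline{\AAut}(\cR)$ (valid by the stated hypotheses on $n$ and $\chr\FF$) to lift every element of $\Aut(\Gamma)$ to either an automorphism or an anti-automorphism of $\cR$. The key observation is that $-\vphi$, viewed as an automorphism of $\cL$, acts on each component $\cR_g$ of $\Gamma$ as multiplication by $\chi(g)$ by formula \eqref{eq:refine_bar_G_repeat}; hence $-\vphi\in\Diag(\Gamma)\subset\Stab(\Gamma)$, and composing with it when necessary we may assume the lift is $\tilde\psi\in\Aut(\cR)$. A short computation using \eqref{eq:refine_bar_G_repeat} shows that such a $\tilde\psi$ permutes the components of $\Gamma$ if and only if $\tilde\psi\in\Aut^*(\wb\Gamma,\vphi)$, and that it induces an element of $\Stab(\Gamma)$ precisely when $\tilde\psi\in\Stab(\wb\Gamma,\vphi)$, which equals $\Diag(\wb\Gamma)$ by Theorem~\ref{groups_fine_phi_grad_matrix}(1). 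Since restriction $\Aut(\cR)\to\Aut(\cL)$ is injective for $n\ge 3$, this yields $\W(\Gamma)\cong\Aut^*(\wb\Gamma,\vphi)/\Stab(\wb\Gamma,\vphi)$.

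With this in hand, Theorem~\ref{groups_fine_phi_grad_matrix}(2) immediately supplies the extension structure of $\W(\Gamma)$, identifying $N$ with the kernel $\ZZ_2^{q+s-1}$ and the quotient with the stated semidirect product over $\Aut^*\Sigma$. For the subgroup assertion, I would take the image of $\Aut(\wb\Gamma,\vphi)/\Stab(\wb\Gamma,\vphi)$. Although $\vphi$ need not be an involution in our Type~II setting, the matrix $\Phi$ in Definition~\ref{type_II_fine} has all block parameters $\mu_j$ equal to $1$, and the defining condition $\xi=\id$ of $\Aut(\wb\Gamma,\vphi)$ forces $u=e$ and $\nu_i=1$ in the parametrization from the proof of part~(2); noting further that $\vphi_0(d_0)=d_0$ via $\beta(t_\alpha)=1$, the equations \eqref{eq:Phi1}--\eqref{eq:Phi3} collapse precisely to \eqref{eq:Phi1_}--\eqref{eq:Phi3_} with $\delta=1$. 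The argument of Theorem~\ref{groups_fine_phi_grad_matrix}(3) therefore applies verbatim and delivers an isomorphism with $\bigl((T^{q+s-1}\times\ZZ_2^s)\rtimes(\sg\Sigma\times\sg(s))\bigr)\rtimes\Aut\Sigma$. Finally, disjointness from $N$ is immediate: every element of $N$ has a representative in $\Aut^*(\wb\Gamma,\vphi)$ with $\pi=\id$, $\psi_0=\id$, $u=e$ and $(\nu_1,\ldots,\nu_k)\in\{\pm1\}^k$, and membership in $\Aut(\wb\Gamma,\vphi)$ forces $\xi=\id$, hence $\nu_i=1$ for all $i$, so the class is trivial in $\W(\Gamma)$.
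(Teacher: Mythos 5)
Your proposal is correct and follows essentially the same route as the paper's proof: identify $\W(\Gamma)$ with $\Aut^*(\wb\Gamma,\vphi)/\Stab(\wb\Gamma,\vphi)$ by lifting automorphisms of $\cL$ to $\cR$ (reducing to automorphisms of $\cR$ via the observation that $-\vphi\in\Stab(\Gamma)$), and then invoke Theorem~\ref{groups_fine_phi_grad_matrix}. Your treatment of the second assertion --- re-running the argument of Theorem~\ref{groups_fine_phi_grad_matrix}(3) with $\mu_j=1$ to identify $\Aut(\wb\Gamma,\vphi)/\Stab(\wb\Gamma,\vphi)$ as the required subgroup disjoint from $N$, even though $\vphi$ need not be an involution here --- is a welcome clarification, since the paper simply cites Theorem~\ref{groups_fine_phi_grad_matrix} whose statement only gives the extension structure, and you correctly note that the constraint $\beta(t_i)=\delta$ from the involution hypothesis is never actually used in that argument.
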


\begin{proof}
The grading $\Gamma=\AII(T,q,s,\tau)$ on $\cL=\Psl_n(\FF)$ is induced by the grading $\Gamma'$ on $\brac{\cR}$, where $\cR=M_n(\FF)$, obtained from $\wb{\Gamma}'=\M(T,q,s,\tau)$ and $\vphi$ as in Definition \ref{type_II_fine}. The universal group of $\wb{\Gamma}'$ is $\bG=\wt{G}(T,q,s,\tau)^0$, while the universal group of $\Gamma$ is the extension $G$ of $\bG$ as in Lemma \ref{lm:extend_group}.
Similarly to Type I, an automorphism $\psi'$ of $\cR$ sends ${}^\alpha\Gamma'$ to $\Gamma'$, for some automorphism $\alpha$ of $G$, if and only if the induced automorphism $\psi$ of $\cL$ sends ${}^\alpha\Gamma$ to $\Gamma$. Note that $\alpha$ fixes the distinguished element $h=(\bar{e},-1)$ and hence yields an automorphism $\wb{\alpha}$ of $\bG$. It follows that $\psi'$ sends ${}^{\wb{\alpha}}\wb{\Gamma}'$ to $\wb{\Gamma}'$. For any $g\in G$ and $X\in\cR_g$, we have $\vphi(X)=-\chi(g)X$. Since $(\psi')^{-1}(X)\in\cR_{\alpha^{-1}(g)}$, we also have $(\vphi(\psi')^{-1})(X)=-\chi(\alpha^{-1}(g))(\psi')^{-1}(X)$. It follows that $\psi'\vphi(\psi')^{-1}=\xi\vphi$ where $\xi$ is the action of the character $(\chi\circ\alpha^{-1})\chi^{-1}$ on $\cR$ determined by the $G$-grading $\Gamma'$. Since $\alpha(h)=h$, $(\chi\circ\alpha^{-1})\chi^{-1}$ can be regarded as a character of $\bG$, hence $\xi$ belongs to  $\Diag(\wb{\Gamma}')$. Conversely, if $\psi'$ sends ${}^{\wb{\alpha}}\wb{\Gamma}'$ to $\wb{\Gamma}'$ and $\psi'\vphi(\psi')^{-1}=\xi\vphi$ for some $\xi\in\Diag(\wb{\Gamma}')$, then for any $\bg\in\bG$ and $X\in\cR_\bg$, we have $\psi'(X)\in\cR_{\wb{\alpha}(\bg)}$ and $\vphi(\psi'(X))=\nu\psi'(X)$ where $\nu\in\FF^\times$ depends only on $\bg$. It follows that $\psi'$ permutes the components of $\Gamma'$ and hence sends ${}^\alpha\Gamma'$ to $\Gamma'$ where $\alpha$ is a lifting of $\wb{\alpha}$. We have proved that an automorphism $\psi'$ of $\cR$ belongs to $\Aut^*(\wb{\Gamma}',\vphi)$, respectively $\Stab(\wb{\Gamma}',\vphi)$, if and only if the induced automorphism $\psi$ of $\cL$ belongs to $\Aut(\Gamma)$, respectively $\Stab(\Gamma)$. Finally, note that $-\vphi$ induces an automorphism of $\cL$ that belongs to $\Stab(\Gamma)$. It follows that the Weyl group of $\Gamma$ is isomorphic to $\Aut^*(\wb{\Gamma}',\vphi)/\Stab(\wb{\Gamma}',\vphi)$. The latter group was described in Theorem \ref{groups_fine_phi_grad_matrix}.
\end{proof}

If $\chr{\FF}=3$, there are two fine gradings on $\Psl_3(\FF)$: the Cartan grading, whose universal group is $\ZZ^2$, and the grading induced by the Cayley--Dickson doubling process for octonions, whose universal group is $\ZZ_2^3$. The Weyl groups of these gradings are, respectively, the classical Weyl group of type $G_2$ \cite[Theorem 3.3]{EK_Weyl} and $\GL_3(2)$ \cite[Theorem 3.5]{EK_Weyl}.


\section{Series $B$, $C$ and $D$}\label{se:BCD}

In this section we describe the Weyl groups of fine gradings on the simple Lie algebras of series $B$, $C$ and $D$ with exception of type $D_4$. Thus, we take $\cR=M_n(\FF)$, $n\ge 4$, and $\cL=\sks(\cR,\vphi)$ where $\vphi$ is an involution on $\cR$. If $\vphi$ is symplectic, then, of course, $n$ has to be even. If $\vphi$ is orthogonal, we assume $n\ge 5$ and $n\ne 8$. First we review the classification of fine gradings on $\cL$ from \cite{E10} (extended to positive characteristic using automorphism group schemes) and then derive the Weyl groups for $\cL$ from what we already know about automorphisms of fine $\vphi$-gradings (Section \ref{se:matrix}) on $\cR$.

\subsection{Classification of fine gradings}

Under the stated assumptions on $n$, the restriction from $\cR$ to $\cL$ yields an isomorphism $\AAut(\cR,\vphi)\to\AAut(\cL)$ (see \cite[\S 3]{BK10}). It follows that the classification of fine gradings on $\cL$ is the same as the classification of fine $\vphi$-gradings on $\cR$ (here $\vphi$ is fixed).

The case of series $B$ is quite easy, because $n$ is odd and hence the elementary $2$-group $T$ must be trivial. Let $G=\wt{G}(\{e\},q,s,\tau)^0$ where $\tau=(e,\ldots,e)$, so $G\cong\ZZ_2^{q-1}\times\ZZ^s$.

\begin{df}\label{grad_B_fine}
Consider the grading $\Gamma=\M(\{e\},q,s,\tau)$ on $\cR$ by $G$. Let $\Phi$ be the matrix given by
\begin{equation*}
\Phi=\diag\left(\underbrace{1,\ldots,1}_q,\begin{bmatrix}0&1\\1&0\end{bmatrix},\ldots,\begin{bmatrix}0&1\\1&0\end{bmatrix}\right).
\end{equation*}
Then $\Gamma$ is a fine $\vphi$-grading for $\vphi(X)=\Phi^{-1}({}^tX)\Phi$ and hence its restriction is a fine grading on $\cL\cong\So_n(\FF)$. We will denote this grading by $\B(q,s)$.
\end{df}

Now we turn to series $C$ and $D$, where $n$ is even and hence $T$ may be nontrivial. So, let $T$ be an elementary $2$-group of even dimension. Choose $\tau$ as in \eqref{datum_2_tau_fine} with all $t_i\in T_-$ in case of series $C$ and all $t_i\in T_+$ in case of series $D$. Let $G=\wt{G}(T,q,s,\tau)^0$, so $G\cong\ZZ_2^{\dim T-2\dim T_0+\max(0,q-1)}\times\ZZ_4^{\dim T_0}\times\ZZ^s$
where $T_0$ is the subgroup of $T$ generated by the elements $t_i t_{i+1}$, $i=1,\ldots,q-1$.

\begin{df}\label{grad_CD_fine}
Consider the grading $\Gamma=\M(\cD,q,s,\tau)$ on $\cR$ by $G$ where $t_1\ne t_2$ if $q=2$ and $s=0$. Let $\Phi$ be the matrix given by
\begin{equation*}
\Phi=
\diag\left(X_{t_1},\ldots,X_{t_q},\begin{bmatrix}0&I\\\delta I&0\end{bmatrix},\ldots,\begin{bmatrix}0&I\\\delta I&0\end{bmatrix}\right),
\end{equation*}
where $\delta=-1$ for series $C$ and $\delta=1$ for series $D$. Then $\Gamma$ is a fine $\vphi$-grading for $\vphi(X)=\Phi^{-1}({}^tX)\Phi$ and hence its restriction is a fine grading on $\cL\cong\Sp_n(\FF)$ or $\So_n(\FF)$. We will denote this grading by $\C(T,q,s,\tau)$ or $\D(T,q,s,\tau)$, respectively.
\end{df}

The following three results are Theorem 5.2 of \cite{E10}, stated separately for series $B$, $C$ and $D$ (and extended to positive characteristic).

\begin{theorem}\label{B_fine}
Let $\FF$ be an algebraically closed field, $\chr{\FF}\neq 2$. Let $n\ge 5$ be odd. Then any fine grading on $\So_n(\FF)$ is equivalent to $\B(q,s)$ where $q+2s=n$. Also, $\B(q_1,s_1)$ and $\B(q_2,s_2)$ are equivalent if and only if $q_1=q_2$ and  $s_1=s_2$.\qed
\end{theorem}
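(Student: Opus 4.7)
The plan is to reduce the theorem to Theorems \ref{th:fine_phi_grad} and \ref{th:fine_phi_grad_involution} by exploiting the fact that $n$ is odd. First I would note that, under the standing assumption $n\ge 5$, restriction induces an isomorphism $\AAut(\cR,\vphi)\to\AAut(\cL)$ where $\cR=M_n(\FF)$ and $\vphi$ is a fixed orthogonal involution on $\cR$. Consequently, classifying fine gradings on $\cL=\So_n(\FF)$ up to equivalence is the same as classifying fine $\vphi$-gradings on $\cR$ up to equivalence of pairs $(\Gamma,\vphi)$ with $\vphi$ an orthogonal involution, as recalled in the introduction of Section \ref{se:BCD}.

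Next I would apply Theorem \ref{th:fine_phi_grad}: every such pair is equivalent to some $(\M(T,q,s,\tau),\vphi_{\tau,\mu})$ with $(q+2s)2^{\frac12\dim T}=n$. Since $n$ is odd, the factor $2^{\frac12\dim T}$ must equal $1$, which forces $T=\{e\}$ and $q+2s=n$. With $T=\{e\}$ the tuple $\tau=(t_1,\ldots,t_q)$ is forced to be $(e,\ldots,e)$, and the bicharacter $\beta$ is trivial, so $\beta(t_i)=1$ for all $i$. Now Theorem \ref{th:fine_phi_grad_involution} applied with $\sgn(\vphi)=1$ (orthogonal case) gives $\mu_1=\cdots=\mu_s=1$, and hence $\vphi_{\tau,\mu}$ is exactly the involution appearing in Definition \ref{grad_B_fine}. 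This shows that every fine grading on $\So_n(\FF)$ is equivalent to $\B(q,s)$ for some $q,s$ with $q+2s=n$.

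For the equivalence criterion, I would again invoke Theorem \ref{th:fine_phi_grad_involution}: two pairs with $T=T'=\{e\}$ are equivalent if and only if $q=q'$, $s=s'$, $\sgn(\vphi)=\sgn(\vphi')$, and $\Sigma(\tau)$ is conjugate to $\Sigma(\tau')$ under the twisted action of $\Aut(T,\beta)$. The last three conditions are automatic: both involutions are orthogonal by construction, and when $T=\{e\}$ the multiset $\Sigma(\tau)$ is simply $q$ copies of $e$, which depends only on $q$. Therefore $\B(q_1,s_1)$ and $\B(q_2,s_2)$ are equivalent if and only if $q_1=q_2$ and $s_1=s_2$.

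There is essentially no obstacle in this proof: the oddness of $n$ collapses all the parameters of the general classification except $q$ and $s$. The only point that merits a brief verification is that the matrix $\Phi$ of Definition \ref{grad_B_fine} does produce an orthogonal involution (it does, being symmetric with $\Phi^2=I$), so that the result of applying Theorems \ref{th:fine_phi_grad} and \ref{th:fine_phi_grad_involution} matches the definition of $\B(q,s)$ on the nose.
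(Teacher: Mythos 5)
Your proof is correct and follows exactly the reduction the paper sets up in the preamble to Section \ref{se:BCD}: the isomorphism $\AAut(\cR,\vphi)\to\AAut(\cL)$ turns the problem into classifying fine $\vphi$-gradings on $M_n(\FF)$ with $\vphi$ a fixed orthogonal involution, and the oddness of $n$ forces $T=\{e\}$, hence $\tau=(e,\dots,e)$, and Theorem \ref{th:fine_phi_grad_involution} with $\delta=1$ forces $\mu_i=1$. The paper itself gives no written argument and simply cites Theorem 5.2 of \cite{E10}; your derivation is the one implicitly intended, and it correctly notes that the equivalence criterion from Theorem \ref{th:fine_phi_grad_involution} collapses to $q_1=q_2$, $s_1=s_2$ because both $\sgn$ and $\Sigma(\tau)$ are trivially determined. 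One minor point you could have added for completeness: since $q+2s=n$ is odd, $q$ is odd, so $q\ne 2$ and $q\ge 1$; this rules out the exceptional case $q=2$, $s=0$, $t_1=t_2$ of Theorem \ref{th:E3.11} and guarantees $q>0$, so $\M(\{e\},q,s,\tau)$ is indeed a fine $\vphi$-grading for every admissible $(q,s)$.
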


\begin{theorem}\label{C_fine}
Let $\FF$ be an algebraically closed field, $\chr{\FF}\neq 2$. Let $n\ge 4$ be even. Then any fine grading on $\Sp_n(\FF)$ is equivalent to $\C(T,q,s,\tau)$ where $(q+2s)\sqrt{|T|}=n$. Moreover, $\C(T_1,q_1,s_1,\tau_1)$ and $\C(T_2,q_2,s_2,\tau_2)$ are equivalent if and only if $T_1\cong T_2$, $q_1=q_2$, $s_1=s_2$ and, identifying $T_1=T_2=\ZZ_2^{2r}$, $\Sigma(\tau_1)$ is conjugate to $\Sigma(\tau_2)$ by the twisted action of $\SP_{2r}(2)$ as in Definition \ref{df:twisted_action}.\qed
\end{theorem}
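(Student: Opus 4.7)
The plan is to reduce the statement to the classification of fine $\vphi$-gradings on the matrix algebra $\cR=M_n(\FF)$ with $\vphi$ a symplectic involution, which was already obtained in Theorems \ref{th:fine_phi_grad} and \ref{th:fine_phi_grad_involution}. The bridge to the Lie side is the isomorphism of affine group schemes $\AAut(\cR,\vphi)\to\AAut(\cL)$ recalled at the start of this section (valid since $n\ge 4$ and $\cL=\sks(\cR,\vphi)=\Sp_n(\FF)$).

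First I would translate the classification problem. Fine gradings on $\cL$ up to equivalence correspond bijectively to maximal diagonalizable subgroupschemes of $\AAut(\cL)$ up to conjugation; via the isomorphism above, these correspond to maximal diagonalizable subgroupschemes of $\AAut(\cR,\vphi)$, i.e. to fine $\vphi$-gradings on $\cR$ for the fixed symplectic $\vphi$, considered up to equivalence of $\vphi$-graded algebras. Since any two symplectic involutions of $\cR$ are conjugate inside $\Aut(\cR)$, we may let $\vphi$ float within its conjugacy class, so the problem becomes the classification of pairs $(\Gamma,\vphi)$, with $\vphi$ a symplectic involution and $\Gamma$ a fine $\vphi$-grading, up to equivalence of pairs in the sense of Section \ref{se:matrix}.

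Next I would invoke Theorem \ref{th:fine_phi_grad} to bring any such pair into the normal form $(\M(T,q,s,\tau),\vphi_{\tau,\mu})$ with $(q+2s)\sqrt{|T|}=n$. The condition that $\vphi_{\tau,\mu}$ be a symplectic involution means $\sgn(\vphi_{\tau,\mu})=-1$, which by Theorem \ref{th:fine_phi_grad_involution} is equivalent to $\beta(t_1)=\ldots=\beta(t_q)=\mu_1=\ldots=\mu_s=-1$. Thus $\tau$ takes values in $T_-$ and the $\mu_j$ may be normalized to $-1$, recovering exactly the data of Definition \ref{grad_CD_fine}, so $\Gamma$ is equivalent to some $\C(T,q,s,\tau)$. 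The degenerate case $q=2$, $s=0$, $t_1=t_2$ excluded in Theorem \ref{th:E3.11} is not fine as a $\vphi$-grading; its refinement to a graded division algebra is subsumed by the case $q=1$, $s=0$ over a larger support $T$ and costs nothing to discard. The uniqueness assertion is then an immediate reading of Theorem \ref{th:fine_phi_grad_involution}, since $\sgn(\vphi_i)=-1$ in both pairs is automatic, leaving exactly the conditions $T_1\cong T_2$, $q_1=q_2$, $s_1=s_2$, and $\Sigma(\tau_1)$ conjugate to $\Sigma(\tau_2)$ under the twisted $\SP_{2r}(2)$-action.

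The one point deserving care, rather than being a true obstacle, is the matching of maximality in the first step: the isomorphism $\AAut(\cR,\vphi)\to\AAut(\cL)$ must send \emph{maximal} diagonalizable subgroupschemes to \emph{maximal} diagonalizable subgroupschemes, so that the word ``fine'' is preserved on both sides. This is automatic from the fact that it is an isomorphism of group schemes, but it should be stated explicitly so that the correspondence ``fine grading on $\cL$ $\leftrightarrow$ fine $\vphi$-grading on $\cR$'' is unambiguous.
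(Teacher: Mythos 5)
Your argument is correct and amounts to spelling out the route that the paper leaves implicit: the paper simply cites \cite[Theorem 5.2]{E10} for this statement, but the preceding discussion in Section~\ref{se:BCD} (the isomorphism $\AAut(\cR,\vphi)\to\AAut(\cL)$ and the resulting identification of fine gradings on $\cL$ with fine $\vphi$-gradings on $\cR$) together with Theorems~\ref{th:fine_phi_grad} and~\ref{th:fine_phi_grad_involution} is precisely the intended derivation, which you carry out. Two tiny points of wording rather than substance: Theorem~\ref{th:fine_phi_grad_involution} forces $\mu_1=\ldots=\mu_s=-1$ outright (no normalization is needed), and the degenerate case $q=2$, $s=0$, $t_1=t_2$ is already excluded because Theorem~\ref{th:fine_phi_grad} outputs a \emph{fine} $\vphi$-grading, so no separate discarding argument is required.
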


\begin{theorem}\label{D_fine}
Let $\FF$ be an algebraically closed field, $\chr{\FF}\neq 2$. Let $n\ge 6$ be even. Assume $n\ne 8$. Then any fine grading on $\So_n(\FF)$ is equivalent to $\D(T,q,s,\tau)$ where $(q+2s)\sqrt{|T|}=n$. Moreover, $\D(T_1,q_1,s_1,\tau_1)$ and $\D(T_2,q_2,s_2,\tau_2)$ are equivalent if and only if $T_1\cong T_2$, $q_1=q_2$, $s_1=s_2$ and, identifying $T_1=T_2=\ZZ_2^{2r}$, $\Sigma(\tau_1)$ is conjugate to $\Sigma(\tau_2)$ by the twisted action of $\SP_{2r}(2)$ as in Definition \ref{df:twisted_action}.\qed
\end{theorem}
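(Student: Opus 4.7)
The plan is to reduce the classification to the $\vphi$-grading results of Section \ref{se:matrix}, exactly mirroring how Theorems \ref{B_fine} and \ref{C_fine} follow from them. First I would invoke the fact stated at the beginning of this section: since $n \geq 6$, $n \neq 8$, and $\vphi$ is orthogonal, the restriction map gives an isomorphism of automorphism group schemes $\AAut(\cR,\vphi) \to \AAut(\cL)$ (this is where the exclusion of $D_4$ enters, via the triality phenomenon in \cite{BK10}). Under this isomorphism, diagonalizable subgroupschemes correspond, so maximal ones correspond, and consequently fine gradings on $\cL$ (up to equivalence) are in bijection with fine $\vphi$-gradings on $\cR$ (up to equivalence of pairs $(\Gamma,\vphi)$ with $\vphi$ orthogonal fixed as an involution of the ambient matrix algebra).

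Next I would apply Theorem \ref{th:fine_phi_grad} to see that any fine $\vphi$-grading on $\cR=M_n(\FF)$ is equivalent to some $(\M(T,q,s,\tau),\vphi_{\tau,\mu})$ with $T=\ZZ_2^{2r}$ and $(q+2s)2^r=n$; since $|T|=2^{2r}$, this gives $(q+2s)\sqrt{|T|}=n$. Because $\vphi$ is required to be an orthogonal involution, the first assertion of Theorem \ref{th:fine_phi_grad_involution} forces $\sgn(\vphi_{\tau,\mu})=1$, i.e.\ $\beta(t_i)=1$ for all $i$ (so $t_i\in T_+$) and $\mu_j=1$ for all $j$. These are precisely the data allowed in Definition \ref{grad_CD_fine} for series $D$, and the matrix $\Phi$ there matches $\Phi$ from Theorem \ref{th:E3.11} with $\delta=1$. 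Therefore the restriction of such a pair to $\cL=\sks(\cR,\vphi)$ is exactly the grading $\D(T,q,s,\tau)$, proving that every fine grading on $\So_n(\FF)$ is equivalent to one on the list.

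For the equivalence criterion, I would again use the bijection between fine gradings on $\cL$ and equivalence classes of pairs $(\Gamma,\vphi)$ with $\vphi$ an orthogonal involution on $\cR$: two such pairs are equivalent if and only if the induced gradings on $\cL$ are equivalent (this uses the fact that an automorphism of $\cL$ lifts uniquely to an automorphism of $\cR$ commuting with $\vphi$, again thanks to the isomorphism of group schemes for $n\neq 8$). Then the second assertion of Theorem \ref{th:fine_phi_grad_involution}, specialized to $\delta=1$, says exactly that $(\M(T_1,q_1,s_1,\tau_1),\vphi_{\tau_1,\mathbf{1}})$ and $(\M(T_2,q_2,s_2,\tau_2),\vphi_{\tau_2,\mathbf{1}})$ are equivalent iff $r_1=r_2$, $q_1=q_2$, $s_1=s_2$, and $\Sigma(\tau_1)$ is conjugate to $\Sigma(\tau_2)$ by the twisted action of $\Aut(T,\beta)\cong\SP_{2r}(2)$. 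This yields the stated criterion.

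The main obstacle I anticipate is the boundary case $q=2$, $s=0$, $t_1=t_2$, which is explicitly excluded from Theorem \ref{th:E3.11} because there $\Gamma$ is only a $\vphi$-grading after a further refinement making $\cR$ a graded division algebra. I would need to argue that this refinement, which enlarges $T$ to a group of order $|T|\cdot 4=n^2$ with $q=s=0$, restricts on $\cL$ to a grading equivalent to some $\D(T',0,0,\emptyset)$ already on the list, so no new fine grading is missed; equivalently, the definition of $\D(T,q,s,\tau)$ in Definition \ref{grad_CD_fine} (which forbids $q=2,s=0,t_1=t_2$) excludes only a non-fine case and the list remains exhaustive. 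A secondary point to verify carefully is that the isomorphism $\AAut(\cR,\vphi)\to\AAut(\cL)$ and the absence of outer automorphisms coming from $\cR$ that are not already inner-on-$\vphi$ really do make equivalence of pairs the same as equivalence of the restricted gradings; this is where the exclusion $n\neq 8$ is essential, since for $D_4$ triality would introduce additional equivalences not accounted for by $\Aut(\cR,\vphi)$.
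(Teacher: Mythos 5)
Your proposal is correct and follows the same route the paper itself takes: reduce via the isomorphism $\AAut(\cR,\vphi)\to\AAut(\cL)$ to the classification of fine $\vphi$-gradings with $\vphi$ an orthogonal involution, then invoke Theorems~\ref{th:fine_phi_grad} and~\ref{th:fine_phi_grad_involution}. (The paper states this as a citation of \cite{E10}; your argument is just the unwinding of the preliminary paragraph of Section~\ref{se:BCD} combined with the results of Section~\ref{se:matrix}.) One small inaccuracy in your closing remark: the refinement of the excluded case $q=2$, $s=0$, $t_1=t_2$ makes $\cR$ a graded division algebra, which corresponds to $k=q+2s=1$, i.e.\ $q'=1$, $s'=0$ (with $\tau'=(t')$ a singleton), not $q'=s'=0$ --- the latter would leave $V$ of rank $0$. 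But that aside is unnecessary anyway: since Theorem~\ref{th:fine_phi_grad} produces a representative $(\M(T,q,s,\tau),\vphi_{\tau,\mu})$ that \emph{is} a fine $\vphi$-grading, Theorem~\ref{th:E3.11} guarantees the parameters automatically avoid the excluded case, so no separate check is needed.
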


\subsection{Weyl groups of fine gradings}

Let $\Gamma=\B(q,s)$, $\C(T,q,s,\tau)$ or $\D(T,q,s,\tau)$, so $\Gamma$ is the restriction of the grading $\Gamma'=\M(T,q,s,\tau)$ on $\cR$ to $\cL=\sks(\cR,\vphi)$. By arguments similar to the proof of Theorem \ref{th:Weyl_AII}, one shows that the Weyl group of $\Gamma$ is isomorphic to $\Aut(\Gamma',\vphi)/\Stab(\Gamma',\vphi)$, which was described in Theorem \ref{groups_fine_phi_grad_matrix}. For $\Gamma=\B(q,s)$, $T$ is trivial and $\Sigma$ is a singleton of multiplicity $q$, so we obtain:

\begin{theorem}\label{th:Weyl_B}
Let $\FF$ be an algebraically closed field, $\chr{\FF}\neq 2$. Let $n\ge 5$ be odd. Consider the fine grading $\Gamma=\B(q,s)$ on $\So_n(\FF)$ as in Definition \ref{grad_B_fine}, where $q+2s=n$. Let $\Sigma=\Sigma(\tau)$. Then
$\W(\Gamma)\cong\sg(q)\times W(s)$ where $W(s)=\ZZ_2^s\rtimes\sg(s)$ (wreath product of $\sg(s)$ and $\ZZ_2$).\qed
\end{theorem}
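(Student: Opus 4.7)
The plan is to apply Theorem \ref{groups_fine_phi_grad_matrix}(3) directly. The paragraph preceding the theorem has already reduced the computation of $\W(\Gamma)$ for $\Gamma=\B(q,s)$ to the evaluation of $\Aut(\Gamma',\vphi)/\Stab(\Gamma',\vphi)$, where $\Gamma'=\M(\{e\},q,s,\tau)$ with $\tau=(e,\ldots,e)$ and $\vphi$ is the orthogonal involution of Definition \ref{grad_B_fine}. All that is left is to specialize the general description of that quotient to this degenerate case.

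First I would observe that, since $n$ is odd, the support $T$ of the graded division algebra $\cD$ appearing in the construction of $\Gamma'$ must be trivial: the relation $n=(q+2s)2^{\frac12\dim T}$ from Definition \ref{construct_phi_fine} forces $\dim T=0$. Consequently $\cD=\FF$, the groups $T^{q+s-1}$ and $\Aut(T,\beta)$ are both trivial, and so is $\Aut\Sigma$. Since $\tau=(e,\ldots,e)$, the multiset $\Sigma=\Sigma(\tau)$ consists of a single element with multiplicity $q$, and hence $\sg\Sigma=\sg(q)$.

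Substituting these values into the formula $\big((T^{q+s-1}\times\ZZ_2^s)\rtimes(\sg\Sigma\times\sg(s))\big)\rtimes\Aut\Sigma$ of Theorem \ref{groups_fine_phi_grad_matrix}(3) yields
\[
\W(\Gamma)\cong\ZZ_2^s\rtimes(\sg(q)\times\sg(s)).
\]
Reading off the actions listed in part (3) of that theorem, $\sg(q)=\sg\Sigma$ acts trivially on the $\ZZ_2^s$-factor (its prescribed action was on the first $q$ components of $T^{q+s}/T$, which has collapsed), while $\sg(s)$ acts by the natural permutation of the $s$ coordinates of $\ZZ_2^s$.

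Finally, because the $\sg(q)$-action on $\ZZ_2^s$ is trivial, the semidirect product splits off a direct factor of $\sg(q)$, giving $\W(\Gamma)\cong\sg(q)\times(\ZZ_2^s\rtimes\sg(s))=\sg(q)\times W(s)$, as claimed. No serious obstacle is expected: the argument is entirely a book-keeping specialization of Theorem \ref{groups_fine_phi_grad_matrix}(3), and the only mildly delicate point is verifying that each ingredient (the $T$-factors, the two $\Aut$-groups, and the various prescribed actions) either vanishes or persists in exactly the claimed manner.
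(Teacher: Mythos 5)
Your proposal is correct and follows exactly the paper's own approach: reduce to $\Aut(\Gamma',\vphi)/\Stab(\Gamma',\vphi)$ (computed in Theorem \ref{groups_fine_phi_grad_matrix}(3)), note that $n$ odd forces $T$ trivial (so $T^{q+s-1}$ and $\Aut\Sigma$ vanish and $\sg\Sigma=\sg(q)$), and then observe that $\sg(q)$ acts trivially on $\ZZ_2^s$ so the semidirect product splits off $\sg(q)$ as a direct factor. The paper states this in a two-sentence paragraph preceding the theorem; your write-up just spells out the same bookkeeping more fully.
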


For $\C(T,q,s,\tau)$ and $\D(T,q,s,\tau)$, $T$ may be nontrivial, so the answer is more complicated:

\begin{theorem}\label{th:Weyl_CD}
Let $\FF$ be an algebraically closed field, $\chr{\FF}\neq 2$. Let $n\ge 4$ be even. Consider the fine grading $\Gamma=\C(T,q,s,\tau)$ on $\Sp_n(\FF)$ or $\Gamma=\D(T,q,s,\tau)$ on $\So_n(\FF)$ as in Definition \ref{grad_CD_fine}, where $(q+2s)\sqrt{|T|}=n$ and $n\ne 4,8$ in the case of $\So_n(\FF)$. Let $\Sigma=\Sigma(\tau)$. Then
\[
\W(\Gamma)\cong\big((T^{q+s-1}\times\ZZ_2^s)\rtimes(\sg\Sigma\times\sg(s)\big)\rtimes \Aut\Sigma,
\]
where the actions on $T^{q+s-1}$ are via the identification with $T^{q+s}/T$ (diagonal imbedding).
\end{theorem}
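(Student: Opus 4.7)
The plan is to transfer the computation to the matrix algebra $\cR$ and then invoke Theorem \ref{groups_fine_phi_grad_matrix}(3). Under the hypotheses on $n$ stated at the beginning of this section (and the exclusion of type $D_4$), the restriction map induces an isomorphism of affine group schemes $\AAut(\cR,\vphi)\to\AAut(\cL)$; in particular, each $\psi\in\Aut(\cL)$ lifts uniquely to an automorphism $\psi'$ of $\cR$ commuting with $\vphi$, and conversely every $\psi'\in\Aut(\cR,\vphi)$ restricts to an automorphism of $\cL$. This is the analogue of the first step in the proof of Theorem \ref{th:Weyl_AII}, but cleaner: here no passage to $[\cR,\cR]/(Z(\cR)\cap[\cR,\cR])$ is required.

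Next I would verify that the correspondence $\psi'\leftrightarrow\psi$ identifies the relevant automorphism groups of the gradings. Because $\Gamma'$ is a $\vphi$-grading, each homogeneous component decomposes as $\cR_g=\cL_g\oplus\sym(\cR,\vphi)_g$, so $\psi'\in\Aut(\cR,\vphi)$ sends $\cR_g$ onto $\cR_{\alpha(g)}$ iff its restriction $\psi$ sends $\cL_g$ onto $\cL_{\alpha(g)}$ for every $g\in G$; conversely, the $\cL_g$ determine the grading $\Gamma'$ on $\cR$ because $\cL$ generates $\cR$ as an associative algebra and $\cL_{g_1}\cdot\cL_{g_2}\subset\cR_{g_1g_2}$. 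Hence $\psi\in\Aut(\Gamma)$ if and only if $\psi'\in\Aut(\Gamma',\vphi)$, and the analogous equivalence holds for the stabilizers. I conclude
\[
\W(\Gamma)=\Aut(\Gamma)/\Stab(\Gamma)\cong\Aut(\Gamma',\vphi)/\Stab(\Gamma',\vphi).
\]

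Finally, since $\vphi$ is an involution with $\sgn(\vphi)=-1$ in the symplectic case and $\sgn(\vphi)=+1$ in the orthogonal case, Theorem \ref{groups_fine_phi_grad_matrix}(3) applies and directly yields the described structure of the quotient. I do not expect a serious obstacle: essentially all the hard combinatorial work lives in Theorem \ref{groups_fine_phi_grad_matrix}, and the transfer step is strictly simpler than in the Type II case of series $A$ treated in Theorem \ref{th:Weyl_AII}, because there is no need to construct a central extension of the grading group via Lemma \ref{lm:extend_group} --- the involution $\vphi$ (rather than $\chi^2$) governs the grading, and $G=\wt{G}^0(T,q,s,\tau)$ is already the universal group of $\Gamma$ on both sides. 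The only point requiring care is the implicit restriction $n\neq 4,8$ in the orthogonal case, which guarantees both that we avoid $D_4$ and (for $n=4$) that $\So_n(\FF)$ is simple, so that the isomorphism $\AAut(\cR,\vphi)\to\AAut(\cL)$ used at the outset holds.
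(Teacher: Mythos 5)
Your proof is correct and follows exactly the route the paper itself takes: the paper's argument for Theorem \ref{th:Weyl_CD} consists precisely of the observation that, by reasoning parallel to Theorem \ref{th:Weyl_AII} (but simpler, since no central extension of the grading group is needed), the isomorphism $\AAut(\cR,\vphi)\to\AAut(\cL)$ identifies $\W(\Gamma)$ with $\Aut(\Gamma',\vphi)/\Stab(\Gamma',\vphi)$, after which Theorem \ref{groups_fine_phi_grad_matrix}(3) yields the stated description. You have merely made explicit the transfer step (via the decomposition $\cR_g=\cL_g\oplus\sym(\cR,\vphi)_g$ and the fact that $\cL$ generates $\cR$) which the paper leaves to the reader.
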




\end{document}